\renewcommand\sout{\bgroup\markoverwith
{\textcolor{red}{\rule[0.7ex]{3pt}{1.4pt}}}\ULon}
\newcommand\RE{\operatorname{Re}}
\newcommand{\doublewidetilde}[1]{{%
  \mathpalette\double@widetilde{#1}%
}}
\newcommand{\double@widetilde}[2]{%
  \sbox\z@{$\m@th#1\widetilde{#2}$}%
  \ht\z@=.9\ht\z@
  \widetilde{\box\z@}%
}
\newcommand{\dist}{\operatorname{dist}}
\newcommand\seq{\, = \,}
\newcommand\define{\mathrel{\ := \ }}
\newcommand\ede{{\define}}
\newcommand{\CC}{\mathbb C}
\newcommand{\NN}{\mathbb N}
\newcommand{\RR}{\mathbb R}
\newcommand{\ZZ}{\mathbb Z}
\newcommand{\CI}{{\mathcal C}^{\infty}}
\newcommand{\CIc}{{\mathcal C}^{\infty}_{\text{c}}}
\newcommand\pa{\partial}
\newcommand{\maD}{\mathcal D}
\newcommand{\maL}{\mathcal L}
\newcommand{\maP}{\mathcal P}
\newcommand{\maU}{\mathcal U}
\newcommand{\maV}{\mathcal V}
\newcommand{\maW}{\mathcal W}
\newcommand{\maO}{\mathcal O}
\newcommand{\maX}{\mathcal X}
\newcommand{\maY}{\mathcal Y}
\newcommand\vect[1]{\mathbf{#1}}
\newcommand\nH[2]{|#2|_{\maV, #1}}
\newcommand\nW[2]{|||#2|||_{\maV, #1}}
\newcommand\nnW[2]{|||#2|||_{\maV, #1}}
\newcommand\scal[2]{ ( #1, #2 )}
\newtheorem{theorem}{Theorem}[section]
\newtheorem{proposition}[theorem]{Proposition}
\newtheorem{corollary}[theorem]{Corollary}
\newtheorem{lemma}[theorem]{Lemma}
\theoremstyle{definition}
\newtheorem{assumptions}[theorem]{Assumptions}
\newtheorem{definition}[theorem]{Definition}
\newtheorem{notation}[theorem]{Notation}
\newtheorem{remark}[theorem]{Remark}
\author[S. Labrunie]{Simon Labrunie} \address{
   Universit\'{e} de Lorraine, CNRS, IECL, F-54000 Nancy, France} 
\email{simon.labrunie@univ-lorraine.fr}
\author[H. Mohsen]{Hassan Mohsen} \address{
   Universit\'{e} de Lorraine, CNRS, IECL, F-54000 Nancy, France}
\email{hassan.mohsen@univ-lorraine.fr}
\author[V. Nistor]{Victor Nistor} \address{
   Universit\'{e} de Lorraine, CNRS, IECL, F-57000 Metz, France}
\email{victor.nistor@univ-lorraine.fr}
\begin{document}

\title[Parametric transmission problems]
{Polynomial bounds for the solutions of parametric transmission
problems on smooth, bounded domains}
%{Polynomial estimates for  
%parametric problems on smooth bounded domains}

\subjclass{35R01, 35J75, 46E35, 65N75}
\keywords{Parametric elliptic equations, strongly elliptic operators,
transmission problems, Poincar\'e inequality, Sobolev spaces, Finite Element Method,
uncertainty quantification}

\begin{abstract}
We consider a \emph{family} $(P_\omega)_{\omega \in \Omega}$ of elliptic second order 
differential operators on a domain $U_0 \subset \RR^m$ whose coefficients depend on the space variable 
$x \in U_0$ and on $\omega \in \Omega,$ a probability space. We allow the 
coefficients $a_{ij}$ of $P_\omega$ to have jumps over a fixed interface $\Gamma \subset U_0$ (independent 
of $\omega \in \Omega$). We obtain polynomial in the norms of the coefficients estimates on the norm of 
the solution $u_\omega$ to the equation $P_\omega u_\omega = f$ with transmission and mixed boundary conditions
(we consider ``sign-changing'' problems as well). 
In particular, we show that, if $f$ and the coefficients $a_{ij}$ are smooth enough and follow a 
log-normal-type distribution, then the map $\Omega \ni \omega \to \|u_\omega\|_{H^{k+1}(U_0)}$ 
is in $L^p(\Omega)$, for all $1 \le p < \infty$. The same is true for the norms of the inverses
of the resulting operators. We expect our estimates to be useful in Uncertainty Quantification.
\end{abstract}

\maketitle

\tableofcontents

\section{Introduction}

\subsection{Setting and statement of main result}
We consider the second order transmission/mixed boundary value problem
\begin{align}\label{eq.classical}
  \left\{ \begin{array}{rcl}
  P^{A} u \ = \  f & \text{ in } & U_0 \\ 
  u \ = \ \tilde g & \text{ on } & \partial_D U_0  \\
  \partial_{\nu}^{A} u \ = \ g & \text{ on } & \partial_N U_0  \\
  \, [[u]]_{\Gamma} \ = \ \tilde h & \text{ on } & \Gamma \\
  \, [[\partial_{\nu}^{A} u]]_{\Gamma} \ = \ h & \text{ on } & \Gamma 
\end{array}\right . 
\end{align}
on a \emph{bounded (open) domain} $U_0 \subset \RR^m$. The symbols appearing in
this problem have the following meaning. The matrix $A = [a_{ij}] \in M_{m+1}(L^\infty(U_0))$, 
$i, j = 0, \ldots, m$, is the matrix of coefficients of
\begin{equation}
   P^{A} u(x) \ede \sum_{i,j = 0}^m \pa_i \big ( a_{ij}(x) \pa_j u(x) \big )\,,
\end{equation}
where we have set $\pa_0 := id$, for convenience, and $\pa_j := \frac{\pa}{\pa x_j}$ for $j \ge 1$,
as usual. We assume that the domain $U_0$ on which our problem is formulated is open and bounded and is 
\emph{decomposed} into open subsets $U_j$, meaning that  
\begin{equation}\label{eq.decomp.U0}
   \overline{U}_0 \seq  \bigcup_{j=1}^N \overline{U}_j
\end{equation} 
where $U_j \subset U_0$, $j = 1, \ldots, N$, are open and disjoint. We allow the 
coefficients $a_{ij}$ of $P^{A}$ to have jumps over a fixed interface $$\Gamma := \bigcup_{j=1}^N \pa U_j 
\smallsetminus \pa U_0 \subset U_0 \,.$$  We assume that $\pa U_0
= \pa_D U_0 \cup \pa_N U_0$ is a disjoint partition with $\pa_D U_0$ and $\pa_N U_0$ both 
closed (and hence, both open in $\pa U_0$). This is necessary in order to avoid singularities
from the jump in the type of boundary conditions. The treatement of jumps in boundary conditions 
would require additional techniques that would render the paper unreadable. 

Our main result is a \emph{polynomial estimate} in the norm of the coefficients $A$ for the norm 
of the solution $u$ of the problem~\eqref{eq.classical}, see Theorem~\ref{thm.transmission} shortly
below. To formulate this theorem, we shall need some notation. Let 
\begin{equation}
   H_D^1(U_0) \ede \{ v \in H^1(U_0) \mid v\vert_{\pa_D U_0} = 0\}
\end{equation}
and 
\begin{equation}
   \maP_0 \seq \maP_0^{A} : H_D^1(U_0) \to H_D^1(U_0)^*
\end{equation}   
be the operator of the weak formulation of our problem (see Definition~\ref{def.opsP.k}, for 
the standard definition). Let 
\begin{equation}
   \check H^k(U_0) \ede \bigoplus_{j=1}^N H^k(U_j) \quad \mbox{ and } \quad \check W^{k,\infty}(U_0) 
   \ede \bigoplus_{j=1}^N W^{k,\infty}(U_j)
\end{equation} 
be the \emph{broken Sobolev spaces} associated to the decomposition of $U_0 \subset \RR^m$ into the disjoint subdomains 
$U_j$. It is convenient to set $F \ede (f, \tilde g, g, \tilde h, h)$ for the data in Problem \eqref{eq.classical} and 
\begin{equation}\label{eq.def.normF}
   \begin{gathered}
      \|F\|_{V_{k}^-} \ede \| f \|_{\check H^{j}} + \|\tilde g\|_{H^{j+ \frac32}} +
       \|\tilde h\|_{H^{j+\frac32}} + \|g\|_{H^{j+\frac12}} + \|h\|_{H^{j+\frac12}} \ \mbox{ if}\\
       F \in \check H^{k}(U_0) \oplus \check H^{k+ 3/2}(\pa_D U_0) 
       \oplus \check H^{k+ 1/2}(\pa_N U_0) \oplus \check H^{k + 3/2}(\Gamma) \oplus \check H^{k+ 1/2}(\Gamma)\,.
   \end{gathered}
\end{equation}
If $E$ is a normed space, we shall typically write $\| \cdot \|_E$ for its norm. If $\xi \notin E$,
we let $\|\xi \|_{E} := \infty$. We let $\maU := \{U_j \mid j = 0, \ldots, N\}$ be our family of open
subsets of $\RR^m$ as in Equation \eqref{eq.decomp.U0}.

\begin{theorem}\label{theorem.mainI} Let us assume that $U_0$ is bounded, 
   that all $\pa U_j$, $j = 0, \ldots, N$, are smooth, and that $\pa_D U_0$, $\pa_N U_0$,
   and $\Gamma$ are closed and disjoint. Then there exist $C_{k,\maU} > 0$,
   $k \ge 0$, with the following property. Let 
   $A = [a_{ij}]\in M_{m+1}(\check W^{k+1,\infty}(U_0))$ be such   
   that $\maP_0 := \maP_0^A : H_D^1 (U_0) \to H_D^1(U_0)^*$ is invertible. Then, for every 
   $F := (f, \tilde g, g, \tilde h, h)$,
   as in Equation \eqref{eq.def.normF}, we have
\begin{equation*}
     \| u \| _{\check H^{k+2}(U_0)} \leq C \,
      \sum_{q=0}^{k+1} \, 
      \|  \maP_0^{-1} \|^{q+1} \,
      \nnW{k}{a_{mm}^{-1}}^{(q+1)k+1} 
      \|A\|_{\check W^{k+1,\infty}(U_0)}^{(q+1)(k+1)} 
      \|F\|_{V_{k+1-q}^-}\,.
\end{equation*} 
   where  $\nnW{k}{a_{mm}^{-1}}^{(q+1)k+1}$ is the $W^{k,\infty}$ norm of the coefficients $(a_{mm}^{\alpha})^{-1}$
   of the matrix $A$ in suitable adapted coordinates $\phi_\alpha : W_\alpha \to W_\alpha'$
   near the boundary (see Definition~\ref{def.amm}).
\end{theorem}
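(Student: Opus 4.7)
The plan is to prove the estimate by induction on $k$, reducing it to iteration of a bootstrap inequality that trades one Sobolev derivative for extra factors of $\|\maP_0^{-1}\|$, $\nnW{k}{a_{mm}^{-1}}$, and $\|A\|_{\check W^{k+1,\infty}}$. The global estimate is assembled from local ones via a finite partition of unity subordinate to charts of three types: interior charts inside a single $U_j$, boundary charts that straighten $\pa U_0$ while carrying a single boundary-condition type, and interface charts that straighten $\Gamma$. In every chart the adapted coordinates $\phi_\alpha$ are chosen so that $x_m$ is normal and $x_1,\ldots,x_{m-1}$ are tangential, so that $a_{mm}^\alpha$ has an intrinsic meaning. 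The disjointness of $\pa_D U_0$, $\pa_N U_0$ and $\Gamma$ is exactly what ensures this trichotomy of charts is well posed.

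For the base case $k=0$, the invertibility of $\maP_0$ applied to the weak formulation (after lifting the inhomogeneous boundary/transmission data to an auxiliary $\check H^2$ function, with lifting norms polynomial in the data by standard trace theorems on smooth subdomains) gives an $H^1$ bound with one factor of $\|\maP_0^{-1}\|$. The upgrade to $\check H^2$ is standard: on each chart, tangential first derivatives $\pa^\alpha_{\mathrm{tan}} u$ are controlled by reapplying the weak formulation and absorbing the commutator $[P^A,\pa^\alpha_{\mathrm{tan}}] u$ via $\|A\|_{\check W^{1,\infty}} \|u\|_{H^1}$; the single missing pure normal derivative $\pa_m^2 u$ is recovered algebraically from $P^A u = f$ by inverting the $a_{mm}$ coefficient, which brings in the first factor of $\|a_{mm}^{-1}\|_{L^\infty}$. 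Transmission conditions match $a_{mm}\pa_m u$ across $\Gamma$ so that the estimate closes in the broken space.

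The inductive step iterates the same tangential/normal dichotomy. Assuming the theorem at level $k-1$, I apply tangential derivatives of total order up to $k+1$ to the weak formulation; the principal-part commutator is estimated via Leibniz by $\|A\|_{\check W^{k+1,\infty}}^{k+1} \|u\|_{\check H^{k+1}}$, and inverting $\maP_0$ on the resulting right-hand side costs one further factor of $\|\maP_0^{-1}\|$. Normal derivatives of order up to $k+2$ are then extracted from the differentiated equation by recursively solving for $a_{mm} \pa_m^{j+2} u$ in terms of lower-order derivatives; each extraction costs one factor of $a_{mm}^{-1}$ and generates further powers of $\|A\|_{\check W^{k+1,\infty}}$ via Leibniz. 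Setting $\alpha_k \ede \|\maP_0^{-1}\| \nnW{k}{a_{mm}^{-1}}^{k+1} \|A\|_{\check W^{k+1,\infty}}^{k+1}$ and $\beta_k \ede \|\maP_0^{-1}\| \nnW{k}{a_{mm}^{-1}}^{k} \|A\|_{\check W^{k+1,\infty}}^{k+1}$, this yields a bootstrap of schematic form
\begin{equation*}
 \|u\|_{\check H^{k+2}(U_0)} \ \leq \ C\, \alpha_k \|F\|_{V_{k+1}^-} + C\, \beta_k \|u\|_{\check H^{k+1}(U_0)}.
\end{equation*}
Iterating $k+1$ times and closing with the $H^1$ base bound produces $\sum_{q=0}^{k+1} \alpha_k \beta_k^q \|F\|_{V_{k+1-q}^-}$, and a direct calculation gives $\alpha_k \beta_k^q = \|\maP_0^{-1}\|^{q+1} \nnW{k}{a_{mm}^{-1}}^{(q+1)k+1} \|A\|_{\check W^{k+1,\infty}}^{(q+1)(k+1)}$, which is the claimed bound.

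The main obstacle is the precise bookkeeping of exponents: the Leibniz expansions of $\pa^\alpha(a_{ij} \pa_j u)$ and of the algebraic solves $a_{mm}^{-1}(\cdots)$ must be organised so that each newly differentiated coefficient is paired with the matching $u$-factor and estimated jointly in a single Sobolev norm, not summand by summand — otherwise one picks up spurious powers that defeat the induction. The asymmetry between the exponents $(q+1)k+1$ of $\nnW{k}{a_{mm}^{-1}}$ and $(q+1)(k+1)$ of $\|A\|_{\check W^{k+1,\infty}}$ reflects the fact that each bootstrap step uses the inverse of $a_{mm}$ once per normal-derivative gain but differentiates the full matrix $A$ once per derivative of any kind. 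Arranging the chart assembly via partition of unity so that the transmission conditions at $\Gamma$ feed exactly into the broken-space norm, and so that the boundary charts on either side of $\pa_D U_0$ and $\pa_N U_0$ give compatible one-sided estimates, is the remaining technical delicacy; the other ingredients are standard elliptic-regularity tools on smooth bounded subdomains.
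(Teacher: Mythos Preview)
Your overall architecture — induction on $k$, localisation to three chart types, tangential/normal split, and algebraic recovery of normal derivatives via $(a_{mm}^\alpha)^{-1}$ — matches the paper's. The paper, too, reduces Theorem~\ref{theorem.mainI} to the homogeneous-data case (your lifting step) and then proves the core estimate (Theorem~\ref{thm.transmission}) by induction. The difference, and the gap in your proposal, lies in the inductive mechanism.

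Your key bootstrap inequality
\[
  \|u\|_{\check H^{k+2}} \ \le\ C\,\alpha_k\,\|F\|_{V_{k+1}^-} + C\,\beta_k\,\|u\|_{\check H^{k+1}},
  \qquad \alpha_k,\beta_k \ \propto\ \|\maP_0^{-1}\|^{1},
\]
is not established by the argument you describe. When you say ``inverting $\maP_0$ on the resulting right-hand side costs one further factor of $\|\maP_0^{-1}\|$'', the operator you are actually inverting is $\maP_0 : H_D^1 \to (H_D^1)^*$, and it returns only an $H^1$ bound. To control $\|X_\ell^\alpha u\|_{\check H^{k+1}}$ — which is what feeds into Lemma~\ref{lemma.sobolev1et2}(3) for the normal recovery — a single inversion of $\maP_0$ is not enough; you need the regularity theory at level $k$, i.e.\ the full induction hypothesis, which already carries up to $k{+}1$ powers of $\|\maP_0^{-1}\|$. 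Thus a one-step bootstrap with a \emph{single} power of $\|\maP_0^{-1}\|$ cannot be obtained this way, and the neat identity $\alpha_k\beta_k^q = \|\maP_0^{-1}\|^{q+1}\nnW{k}{a_{mm}^{-1}}^{(q+1)k+1}\|A\|^{(q+1)(k+1)}$ is an a posteriori pattern rather than the output of your scheme.

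The paper circumvents this by taking the entire sum-form estimate $(\mathcal J_k)$ as the induction hypothesis and, crucially, by formalising ``apply a tangential derivative and commute'' as Nirenberg's trick at the level of strongly continuous semigroups (Proposition~\ref{prop.S}, Corollary~\ref{coro.esti.b.var}): one writes $X_\ell^\alpha(\maP_0^{-1}F)=\maP_0^{-1}(X_\ell^\alpha F)-\maP_0^{-1}\,X_\ell^\alpha(\maP_0)\,\maP_0^{-1}F$, applies $(\mathcal J_k)$ to \emph{each} of the two $\maP_0^{-1}(\cdot)$ terms, and then, inside the second term, applies $(\mathcal J_{k-q})$ again to bound $\|\maP_0^{-1}F\|_{V_{k-q}}$. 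It is this \emph{nested} use of the hypothesis that generates the $(q{+}1)$st power of $\|\maP_0^{-1}\|$; the exponent bookkeeping in Step~2 and Step~3 of the proof of Theorem~\ref{thm.transmission} is precisely the computation that your ``direct calculation'' is standing in for. The semigroup formulation also handles the a priori issue that $X_\ell^\alpha u$ is not yet known to lie in $V_k$ before regularity is proved. Your sketch would become correct if you replaced the single-step bootstrap by this nested application of the full inductive statement, but as written the central inequality is unjustified.
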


The bound $C_{k, \maU}$ may depend, as the notation shows, on $k$ and the open subsets
$\maU := \{ U_0, U_1, \ldots, U_N\}$ of Equation \eqref{eq.decomp.U0}.
Of course, the main point of our result is that the bound $C_{k, \maU}$ is independent of the coefficients
$A$ of the operator. This is relevant since, for a fixed operator, the result was known in the strongly elliptic 
case (it is a very classical result if there is no interface). Another important feature of our 
result is that it applies to sign changing problems (the non-definite case)
for instance, by using ``$T$-coercivity'' \cite{Ciarlet12, Ciarlet13}. It applies also to self-adjoint
operators, as discussed in Section~\ref{sec.self-adjoint} (the last section).

\subsection{The probabilistic setting}
Let us assume that the coefficients $a_{ij}$ depend on both the space variable $x \in U_0$ and on an 
additional variable $\omega \in \Omega$ in a probability space. This is a setting that appears in
practice, because the coefficients $a_{ij}$ represent properties of materials that are not always
known exactly. Our results then translate into estimates for the family $P^{A(\omega)}$ and the associated 
transmission/mixed boundary value problem. One of the reasons Theorem~\ref{theorem.mainI} is important is
that it can be used to treat log-normal-type distributions for the coefficients. 
Indeed, we show that, if $f$ and the coefficients $a_{ij}$ 
are smooth enough and follow a log-normal-type distribution, then the map $\Omega \ni \omega \to 
\|u_\omega\|_{H^{k+1}(U_0)}$ is in $L^p(\Omega)$, for all $0 \le p < \infty$.
The same is true for the norm of $P_\omega^{-1}$ 
(acting between suitable spaces that take into account the boundary and transmission conditions),
see Theorems \ref{thm.integr} and~\ref{thm.integr2}.
We apply our results to obtain Finite Element estimates for Gaussian families and to prove that the
error is integrable, Theorem~\ref{thm.integr3}, thus yielding, in a certain sense, optimal rates
of convergence. We expect these estimates to be useful in Uncertainty 
Quantification~\cite{Schwab1, SchwabBookUQ, LeM1, SullivanBookUQ}.

\subsection{Motivation: sign changing problems and connection with boundary triples}
The initial motivation for this paper (which is largely based on the Ph. D. Thesis of the
second named author~\cite{theseHassan}) is to study the regularity of problems with ``sign-changing'' 
coefficients for \emph{polygonal domains}. Here ``sign changing'' means that the matrix $A$ can be 
definite of different signs on different 
subdomains $U_j \subset U_0$.
(For instance, our operator could be of the form $P^A = c_j \Delta$ with, $c_j$ constant on 
$U_j$, with these constants $c_j$ possibly of different signs.) An approach to the study of these
problems is via ``T-coercivity,'' as in the paper by Ciarlet, Chesnel, and Bonnet-Ben Dia 
\cite{Ciarlet12}. See also~\cite{Ciarlet14, Karim2, Renata, Ciarlet13, Pankrashkin19} for further 
motivation and results on this type of problems. When studying regularity on polygonal domains, we are
facing several issues, among which is the development in singular functions and regularity
in weighted (i.e. Kondratiev) spaces. As is well known, see, for instance~\cite{daugeBook, Kondratiev67,
KMR, NicaiseBook}, it is convenient to study a certain induced operator on the circle. More precisely,
we are interested in whether it is self-adjoint and in its spectrum, since the spectrum of
this self-adjoint operator gives the singular values and singular functions of the operator. 
A numerical study of this spectrum is contained in~\cite{theseHassan}. The self-adjointness
problem also arises in the study of the regularity--even for smooth domains. These problems
are conveniently approached in our setting using the techniques of boundary triples, as
in the paper by Br\"uning, Geyler, and Pankrashkin~\cite{Bruning}. Results in this
direction were obtained in~\cite{theseHassan}. Using self-adjointness results, we then prove 
a parametric regularity result for problems with sign changing coefficients, Theorem~\ref{thm.param.reg}.

\subsection{Contents} The paper is organized as follows. In Section~\ref{sec2} we introduce our 
setting: the domain $U_0 \subset \RR^m$ and its decomposition in subdomains, the basic 
spaces (the broken Sobolev spaces $\check H^k$ and $\check W^{k,\infty}$, $V_k := \check H^{k+1}(U_0) \cap H_D^1(U_0)$,
for $k \ge 0$ and $V_k^- := \check H^{k-1}(U_0) \oplus H^{k-1/2}(\Gamma \cup \pa_N U_0)$, for
$k \ge 1$) and the operators $\maP_k^A : V_k \to V_k^-$, $\maP_k^A u = (P^A u, D_\nu^A u, [[D_\nu^A u]])$.
We also formulate Nirenberg's trick, which is used in ``raising the regularity'' in the induction 
step proof of our main Theorem~\ref{thm.transmission}. In Section~\ref{sec3} we localize
our spaces and differential operators in order to be able to use the approach 
of~\cite{Victor, theseHassan, HassanSimonVictor1}. Essentially, we ``straighten the boundary or
the interface,'' but in a more sophisticated way. More precisely, we introduce
local bases $(X_j^\alpha)$, $j = 1, \dots, m$, of vector fields in local coordinate patches $W_\alpha$ 
and prove the needed technical results in terms of these vector 
fields. Section~\ref{sec4} contains the main estimates of this paper. It also contains some immediate
applications, including the boundedness (and hence integrability) result for the norms of $(\maP_k^A)^{-1}$ 
if both $A$ and $(\maP_0^A)^{-1}$ are bounded with respect to the parameters. In Section~\ref{sec5}, we extend 
this integrability result to some situations when the norms of both $A$ and $(\maP_0^A)^{-1}$ are not 
bounded but are rather derived from some Gaussian variables. The operators of this section are assumed 
to be strongly elliptic. The last section includes some
applications to the case when the operators are not strongly elliptic (problems with ``sign changing 
coefficients'').

\subsection{Acknowledgements}
We thank Patrick Ciarlet, Iulian C\^impean, Mirela Kohr, and Konstantin Pankrashkin for useful discussions.

\section{The domain, the operator(s), and Nirenberg's trick}
\label{sec2}

In this section we formulate more precisely our mixed boundary value/transmission problem
\eqref{eq.classical} and remind some useful results. We also introduce our domain $U_0$ and
its decomposition into subdomains $U_j$ and the basic spaces and
operators. \emph{Throughout this paper, $U_0$ will be
an open subset of $\RR^m$ with smooth boundary. For the main results,
we shall assume that $U_0$ has compact closure.}

We begin by introducing our bilinear form $B^A$ and one of the basic differential
operators $\maP^A$ and $P^A$ that we will use. The operators $\maP_k^A$ to which our 
main results apply are obtained from $\maP^A$.
By $H^k(U_0)$ we denote the usual Sobolev spaces and by $L^p(U_0)$ the usual space of  
$p$-integrable functions~\cite{BrezisBookFA, Evans, Hebey1, LionsMagenes1, TaylorPDEI}.

\subsection{The sesquilinear form $B^A$ and the operator $\maP^A$}
\label{ssec.operators}
Our main objects of study (forms, operators, ... ) are based on the
coefficient matrix
\begin{equation}\label{eq.def.A}
   A \ede [a_{ij}] \in M_{m+1}(L^\infty(U_0)) \,.
\end{equation}
We let $\partial_j := \frac{\pa}{\partial x_j}$, for 
$j = 1, 2, \ldots, m$, as usual, and $\partial_0 := id$. 

\begin{definition}\label{def.eq.def.forme.B}
Let $A$ be the $(m+1) \times (m+1)$ matrix of Equation
\eqref{eq.def.A}. We let $B = B^{A} : H^1(U_0) \times H^1(U_0) \to \CC$ 
be the sesquilinear form
\begin{equation*}
\begin{gathered}
   B(u, v) \seq B^A(u, v)  \ede  \int_{U_0} \, \sum_{i, j=0}^m 
   a_{ij}(x) \pa_j u(x) \pa_i \overline{v(x)}  
   \, dx\\
   \seq \int_{U_0} \, \Big (
   \sum_{i, j=1}^m a_{ij} \pa_j u \pa_i \overline{v} + 
   \sum_{j=1}^m a_{0j} (\pa_j u)  \overline{v} +
   \sum_{i=1}^{m} a_{i0} u  \pa_i \overline{v} + a_{00} u \overline{v} \Big )  \, dx\,.
\end{gathered}
\end{equation*}
\end{definition}

Our operator $\maP \seq \maP^A : H^1(\Omega) \to H^1(\Omega)^*$
and its restriction $P \seq P^A : H^1(\Omega) \to H^{-1}(\Omega)$
will be obtained from the sesquilinear form $B = B^A$ through a weak formulation. 
To that end, we need to recall the following conventions and notation.
Let $V^*$ be the \emph{conjugate linear} dual of some topological vector
space $V$ (in our applications, $V$ will always be a Hilbert space).
Let $$\langle \cdot , \cdot \rangle_{V^*, V} : V^* \times V \to \CC$$ 
be the duality between $V$ and $V^*$, which is, hence, a sesquilinear
form (just like $B = B^A$). We shall use this duality mostly for
$V = H_0^1(U_0)$, for which $V^* = H^{-1}(U_0) := H_0^{1}(U_0)^*$.
We can now introduce the two operators $\maP^A$ and $P^A$ associated to 
$A \in M_{m+1}(L^\infty(\Omega))$. Recall that $\partial_j := \frac{\pa}{\partial x_j}$, 
for $j = 1, 2, \ldots, m$, and that $\partial_0 := id$.

\begin{definition} \label{def.opsP} %\label{def.P.pa} 
Let $ A \ede [a_{ij}] \in M_{m+1}(L^\infty(U_0))$ and $B^A$ be as in Definition
\ref{def.eq.def.forme.B}. 
We define the \emph{full operator} $\maP = \maP^A : H^1(U_0) \to H^{-1}(U_0)^*$ by
\begin{equation*}
 \langle \maP^A u, \phi \rangle_{H^1(U_0)^*, H^1(U_0)} \seq B^A(u, \phi) \,.
\end{equation*}
Similarly, the \emph{``plain'' differential operator} $P = P^A : H^1(U_0) \to H^{-1}(U_0)$ is
\begin{equation*} %\label{opintro}
    Pu \seq P^{A}u  \ede -\sum_{j=1}^m \sum_{i=1}^m  \pa_i  
    (a_{ij}  \partial_j u ) - \sum_{j=1}^{m} \pa_j ( a_{j0} u)  
    + \sum_{i=1}^m a_{0 i} \pa_i u + a_{00} u \,.
\end{equation*}
\end{definition}

The definition of $\maP = \maP^A$ justifies the use of the \emph{conjugate} linear
dual. In turn, this is required by the fact that $B^A$ is sesquilinear, which, in 
turn, is convenient when using positivity (as in the Riesz or Lax-Milgram Lemma's). 
Let us continue with some remarks, the first one on the notation.

\begin{remark}\label{rem.d0}
The choice of the operator $\partial_0 := id$, the identity operator, 
is as in~\cite{Mirela1, theseHassan, HassanSimonVictor1}, and is convenient for
giving a compact form for the form $B^A$:
\begin{equation*}
   B^A(u, v) \ede \sum_{i,j = 0}^m (a_{ij} \pa_j u , \pa_i v)\,,
\end{equation*}
where $(f, g) := \int_\Omega f(x) \overline{g(x)} \, dx$ is the scalar product
on $L^2(\Omega)$. Then $\pa_j^* = - \pa_j$ for $j > 0$,
but (obviously), $\pa_0^* = \pa_0$. Thus $P^A = \sum_{i, j = 0}^m \pa_i^* a_{ij} \pa_j$.
Let $\CIc(W)$ denote the set of smooth, compactly supported functions on some open
set $W$ of a smooth manifold.
Note that the map $P^A : \CIc(U_0) \to \CIc(U_0)$ does not determine the coefficient
matrix $A$ (we can commute $\pa_i \pa_j = \pa_j \pa_i$ or move derivatives past
coefficients). 
\end{remark}

The second remark concerns the important relation between $\maP^A$ and $P^A$.

\begin{remark}\label{rem.dual}
For any $\phi \in H_0^1(U_0)$ and $u \in H^1(U_0)$,
the relations defining $\maP^A$ and $P^A$ (Definition~\ref{def.opsP}) yield 
\begin{equation*}
    \langle \maP^A u , \phi\rangle \ede B^A(\phi, u) \seq (P^A u, \phi)\,.
\end{equation*}
The inclusion $j : H_0^1(U_0) \to H^1(U_0)$ yields by duality a 
surjective restriction map $j^* : H^1(U_0)^* \to H^{-1}(U_0)$. We thus obtain
that
\begin{equation}\label{eq.rel.j}
    P^A \seq j^* \circ \maP^A\,.
\end{equation}
The operator $\maP^A$ thus contains more information than the differential 
operator $P^A$. We will see later that the additional information involves
boundary and transmission conditions. In fact, our main interest is in some closely related 
operators $\maP_k \seq \maP_k^A : V_k  \to V_{k}^-,$ $k \ge 0.$ The operator $\maP_k^A$ is a 
restriction of $\maP_0^A$ and,
in turn, this operator, is somewhere ``between'' $\maP^A$ and $P^A$, satisfying a relation
similar to Equation~\eqref{eq.rel.j}. The relation
between the operators $P^A, \maP^A$, and $\maP_k^A$, $k \ge 0$, is discussed in 
Remarks \ref{rem.relation0} and~\ref{rem.relation1}.
\end{remark}

The matrix $A = [a_{ij}] \in M_{m+1}(L^\infty(U_0))$
is typically fixed, so it will be often omitted from the notation.
(Anticipating, in the case of domains of manifolds, we will need to replace the matrix
$A$ with an endomorphism of the bundle $\CC \oplus T^*M$, as in~\cite{KohrNistor1}. This
will be treated in a forthcoming paper.)

\subsection{Decomposition of the domain $U_0$}
\label{ssec.2.2}
In applications, we shall assume that the coefficients $a_{ij} = a_{i,j} \in L^\infty(\Omega)$, 
$i, j = 0, \ldots, m$, of the matrix $A = [a_{ij}]  \in M_{m+1}(L^\infty(U_0))$ are \emph{piecewise regular.} To defined exactly
our assumptions on the coefficients $a_{ij}$, we need to specify more on the geometry of our
domain $U_0$, namely, to introduce a decomposition of this domain and the resulting interface
$\Gamma$. This is the purpose of this subsection.
Recall that, throughout this paper, $U_0 \subset \RR^m$ will be an open subset. For simplicity, 
we shall also assume that $U_0$ is connected. This is no loss of generality, since the 
general result can be obtained by studying one connected component at a time. 

We will make the following assumptions (see~\cite{HassanSimonVictor1} for a 
related, but different setting):

\begin{assumptions} \label{assum.decomposition}
For any subset $W$ of a topological space, we shall 
let $\pa W$ denote the topological
boundary of $W$. (Thus $\pa W := \overline{W} \smallsetminus W$ if $W$ is open.)
\begin{enumerate}
 \item We are given $N$ \emph{disjoint, open} subsets $U_j \subset U_0$, $j=1, \ldots, N$, such that
\begin{equation*} %\label{eq.def.decomposition}
   \overline{U}_0 \seq \overline{U}_1 \cup \overline{U}_2 \cup \ldots \cup 
   \overline{U}_N\,.
\end{equation*} 

  \item Each $U_j$, $j = 0, \ldots, N$ is connected with $\partial U_j$ smooth.

  \item We are given a vector field $\nu^{[j]}$ that is smooth on
  $\overline{U}_j$ and that, on $\pa U_j$, is of unit lenght and outside pointing 
  normal to $\pa U_j$.
  
  \item The \emph{interface} $$\Gamma \ede \left( \bigcup_{j=1}^N \partial U_j \right) \smallsetminus \partial U_0$$
  is a compact, smooth submanifold of $\RR^m$ (without boundary).
  
  \item We have fixed a vector field $\vect{N} : \Gamma \to \RR^m$, unit normal to
   $\Gamma$ at every point. (This is the same thing as choosing an orientation of $\Gamma$.)
  
  \item We are given $\partial_N U_0 \subset \partial U_0$ closed and open in $\pa U_0$ and we 
  set
  \begin{equation*}%\label{eq.dec.frontiere}
      \partial_D U_0 \ede \pa U_0 \smallsetminus  \partial_N U_0 \,,
  \end{equation*}
  (which will hence also be closed).
  
  \item We will let $\nu: U_0 \smallsetminus \Gamma \to \RR^m$ denote the vector field
that on $U_j$ coincides with $\nu^{[j]}$.
\end{enumerate}
\end{assumptions}

See Figure~\ref{image3} for an illustration of our domains. Note that $\pa_D U_0$,
$\pa_N U_0$, and $\Gamma$ are all compact, smooth submanifolds of $\RR^m$ and that
they are \emph{disjoint}. The fact that they are disjoint will prevent thus the 
appearence of singularities at the places where these sets would meet. 

\begin{center}
\begin{figure}[!h]
\includegraphics[height=5cm]{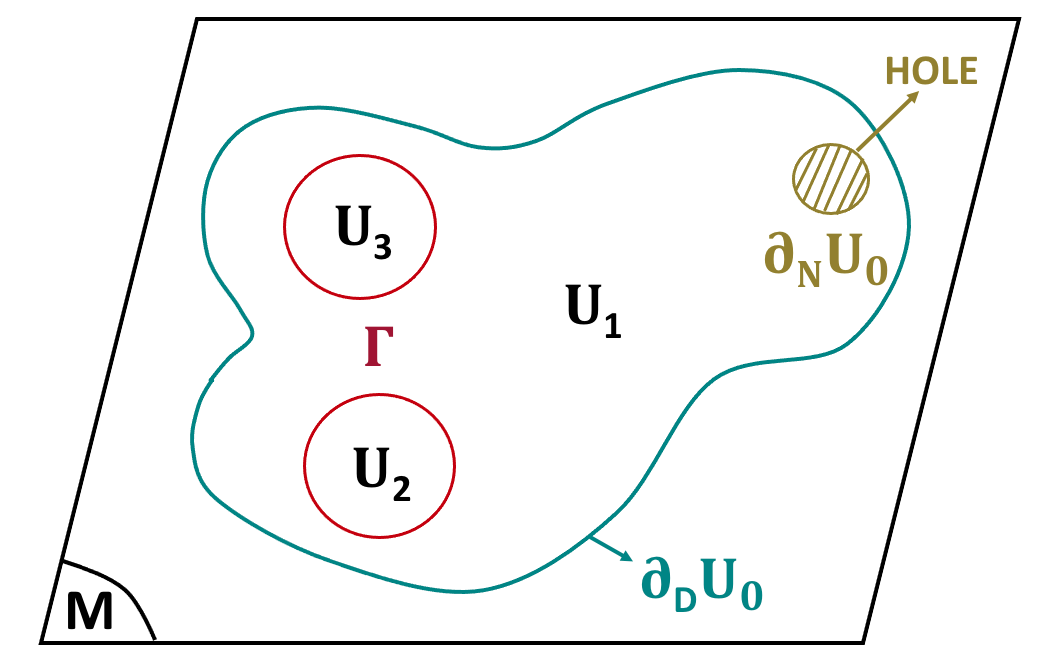}
\caption{\normalsize An illustration of the domains $U_j \subset M$ 
and of the interface $\Gamma$.}
\label{image3}
\end{figure}
\end{center}

Note that the boundary $\partial U_j$ of $U_j$ is not
assumed to be connected. We will impose Neumann type boundary conditions on
$\partial_N U_0$ and Dirichlet type boundary conditions on $\partial_D U_0$.
Also, as $U_0$ is open, we have
\begin{equation}\label{eq.assumpt.2}
   U_0  \seq U_1 \cup U_2 \cup \ldots \cup U _N \cup \Gamma\,.
\end{equation}

\subsection{Function spaces: broken Sobolev spaces, jumps, and traces} 
\label{ssec.2.3}
Our hypotheses ($\Gamma$ and all $\pa U_j$ smooth) imply that
$\Gamma$ is closed and that $\Gamma \cap \partial U_0 = \emptyset$. 
In this framework, it is natural to consider the so-called ``broken Sobolev spaces,''
as in other papers~\cite{NicaiseBook, LiNistorQiao}.

\begin{definition}\label{def.broken.SS}
Let $U_0 \subset \RR^m$ be an open subset as in Assumptions~\ref{assum.decomposition} and 
$\maO \subset U_0$ be an open subset. We let
\begin{equation}\label{eq.def.esb}
   \begin{gathered}
   \check{H}^s(\maO) \ede \bigoplus_{j=1}^N H^s(\maO \cap U_j) \,, \\
   \check{W}^{s, \infty}(\maO) \ede \bigoplus_{j=1}^N W^{s, \infty}(\maO \cap U_j)
   \,, \quad s \ge 0\,,
   \end{gathered}
\end{equation}
with the standard direct sum norms denoted $\|\cdot \|_{\check H^s(\maO)}$ and $\|\cdot \|_{\check W^{s, \infty}(\maO)}$.
\end{definition}

We will forget the open set $\maO$ in the notation of the norms on the above spaces
if there is no danger of confusion (for instance, if $\maO = U_0$), in particular, the norms 
on these spaces will be generically denoted $\|\cdot \|_{\check H^s}$ and $\|\cdot \|_{\check W^{s, \infty}}$
instead $\|\cdot \|_{\check H^s(\maO)}$ and $\|\cdot \|_{\check W^{s, \infty}(\maO)}$.
All our function spaces will be complex vector spaces. The usual trace constructions yield
the ``jumps'' on the interface, whose precise definitions depend on our choices of orientations,
as explained next.

\begin{remark}\label{rem.side-traces}
Let $\Gamma_k \subset \pa U_j$ be a connected component of $\Gamma$.
\begin{enumerate}[(i)]
\item First, it follows from definitions that, for all $p \in \Gamma_k \subset \overline{U})j$, 
we have $N(p) = \pm \nu_j(p)$. If we have $\vect N(p) = + \nu^{[j]}(p)$, we will say that $U_j$ 
is {\em to the right} of $\Gamma$ in $p$. 
Otherwise, we will say that $U_j$ is {\em to the left} of $\Gamma$ in $p$. 

\item We have exactly one domain $U_{j+} = U_+$ to the right of (every point of) $\Gamma_k$ 
and exactly one domain $U_{j-} = U_-$ to the left of (every point of) $\Gamma_k$, since the
domains $U_j$ are assumed to be connected.

\item If $u \in \check H^1(U_0)$, we will 
then let $$u^+ := u \vert_{U_+} \in H^1(U_+) \ \mbox{ and } \
u^- := u \vert_{U_-} \in H^1(U_-) \,,$$ which will therefore 
have traces (restrictions) on the boundaries $\partial U_{\pm}$.

\item Therefore we also obtain (two side) \emph{traces} 
$$u^{\pm}\vert_{\Gamma_k} \in H^{1/2}(\Gamma_k)$$ 
of $u$ on $\Gamma_k = \partial U_+ \cap U_-$.
\end{enumerate}
\end{remark}

We are now ready to introduce the jump over the interface $\Gamma$.

\begin{definition}\label{def.saut}
Using the notation of Remark~\ref{rem.side-traces}, we define the jump 
$[[u]]_\Gamma \in H^{1/2}(\Gamma) = \bigoplus_k H^{1/2}(\Gamma_k)$ 
of $u \in \check H^1(U_0)$ (where $\Gamma_k$ are 
the connected components of the interface $\Gamma$) by
\begin{equation*} %\label{eq.def.saut}
\begin{gathered}
  {[[u]]}_{\vert \Gamma_k} \ede u^+\vert_{\Gamma_k} - u^-\vert_{\Gamma_k} 
  \in H^{1/2}(\Gamma_k) \quad \mbox{ and} \\
  [[u]]_\Gamma := [[u]] _{\vert \Gamma_k} \mbox{ sur } \Gamma_k \subset \Gamma \, .
\end{gathered}
\end{equation*}
\end{definition}

We have the following simple remark

\begin{remark} \label{rem.surj.tr}
Let $u \in \check H^1(U_0)$. We have $u \in H^1(U_0)$ if, and only if, $[[u]]_{\Gamma} = 0$. 
Moreover, the map $[[ \, \cdot \, ]]_{\Gamma} : \check H^1(U_0) \to H^{1/2}(\Gamma)$ is 
surjective. That is, we have an exact sequence
\begin{equation*}
    0 \longrightarrow H^1(U_0) \longrightarrow \check H^1(U_0) 
    \stackrel{[[ \, \cdot \, ]]_{\Gamma}}{ - \! \! \! - \! \! \! \longrightarrow} 
    H^{1/2}(\Gamma) \longrightarrow 0 \,.
\end{equation*}
In particular, $\check H^1(U_0) \simeq H^1(U_0) \oplus H^{1/2}(\Gamma)$, but this isomorphism 
is not canonical (it depends on suitable choices). See~\cite{theseHassan} for a proof.
\end{remark}

For us, a space more important than $H^1(U_0)$ will be the space
\begin{equation*}
     H_{D}^{1}(U_0) \ede \{ u \in H^1(U_0) \mid u \vert_{\partial_D U_0} = 0\}\,.
\end{equation*}

\begin{remark}
By analogy with Remark~\ref{rem.surj.tr}, we have an exact sequence:
\begin{equation*}
    0 \longrightarrow H_D^1(U_0) \longrightarrow \check H^1(U_0) 
    \cap \{ w\vert_{\partial_D U_0} = 0 \}
    \stackrel{[[ \, \cdot \, ]]_{\Gamma}}{ - \! \! \! - \! \! \! \longrightarrow} 
    H^{1/2}(\Gamma) \longrightarrow 0 \,.
\end{equation*}
In particular, $\check H^1(U_0) \cap \{ w\vert_{\partial_D U_0} = 0 \} 
\simeq H_D^1(U_0) \oplus H^{1/2}(\Gamma)$ non-canonically.
\end{remark}

Below, we shall use without further comment the following identifications.

\begin{remark}\label{rem.L2}
$\check H^0(U_0) = H^0(U_0) = L^2(U_0)$ and 
$ \check W^{0, \infty}(U_0) = W^{0, \infty}(U_0) = L^\infty(U_0) \,.$
\end{remark}

\subsection{The spaces $V_k$ and $V_k^-$ and the operators $\maP_k^A$}
\label{ssec.Pk}
As in~\cite{Victor, Mirela1, theseHassan, HassanSimonVictor1} and in other
papers, in order to study weak solutions, it is convenient to consider the spaces
$V_k$ and $V_k^-$ introduced next:

\begin{definition} \label{def.broken}
Let $U_0 \subset \RR^m$ satisfy Assumptions~\ref{assum.decomposition}.
For $k \ge 0$, we let:
\begin{align*}
  V_{k}\, & \ede \check{H}^{k+1}(U_0) \cap H_{D}^{1}(U_0) \\
  V_{k}^{-}  & \ede \check H^{k-1}(U_0) 
  \oplus  H^{k-\frac{1}{2}}(\partial_N U_0 \cup \Gamma) \,, \quad \mbox{if } k > 0\,, \mbox{ and}\\
  V_{0}^{-} & \ede V_0^* \coloneqq H_{D}^{1}(U_0)^* \, .
\end{align*}
\end{definition}

\begin{remark} \label{rem.inclusion}
Explicitly, for $k \ge 0$, we have
\begin{equation*}
  V_{k}\, \seq \{ w \in \bigoplus_{i=1}^{N} H^{k+1}(U_i) \mid [[w]]_\Gamma = 0\,, 
  \ w\vert_{\partial_D U_0} = 0 \}  \,.
\end{equation*}
In particular, $V_0 = H_D^1(U_0)$. Similarly, for $k > 0$, we have that
\begin{equation*} %\label{eq.def.Vk-}
  V_{k}^{-}  \seq \bigoplus_{i=1}^{N}  H^{k-1}(U_i) 
  \oplus  H^{k-\frac{1}{2}}(\partial_N U_0 )  
  \oplus  H^{k-\frac{1}{2}}(\Gamma) \,.
\end{equation*}
\end{remark}

The spaces $V_k$ and $V_k^-$ form an increasing scale of spaces. Namely:

\begin{remark}\label{rem.def.I} 
For suitable $F \subset \Gamma \cup \pa U_0$, we let $$\scal{u}{v}_{F} 
\ede \int_{F} u(x) \overline{v(x)} dS(x)\,,$$ where
$S$ is the induced volume form on $\Gamma \cup \pa U_0$.
Let $k \ge 0$. It follows immediately from definitions that $V_{k+1} \subset V_k$
Similarly, $V_{k+1}^- \subset V_k^-$, for $k \ge 1$. Let
$$I : V_1^- := L^2(U) \oplus  H^{\frac{1}{2}}(\partial_N) \oplus H^{\frac{1}{2}}(\Gamma)
\to V_{0}^{-} := V_0^* := H_{D}^{1}(U_0)^*$$ be given, for $v \in H_{D}^{1}(U_0)$, by
\begin{equation*}
  \langle I(f, g, h), v \rangle \ede \scal{f}{v} + 
  \scal{g}{v}_{\pa_N U_0} + 
  \scal{h}{v}_{\Gamma} \,.
\end{equation*}
Then $I : V_1^- \to V_0^-$ is a continuous inclusion. 
\end{remark}

Before we introduce our operators of interest $\maP_k^A$, let us introduce
the conormal derivative $D_\nu^A$.

\begin{definition} \label{def.derive.conormal}
Let $A \ede [a_{ij}] \in M_{m+1}(W^{1, \infty}(U_0))$  and
let $\nu$ be as in Assumptions~\ref{assum.decomposition}.
The \emph{conormal derivative associated to $A$} is the operator 
$D_{\nu}^{A}  : \check H^2(U_0) \to \check H^{1}(U_0),$
\begin{equation*} %\label{derivéconormal}
   D_{\nu}^{A} u \ede \sum_{i=1}^{m} \nu_i 
   \Big ( \sum_{j=0}^{m} a_{ij} \partial_j u \Big )  \,.
\end{equation*}
\end{definition}

Since $\pa_0 = id$, we have the more explicit formula
\begin{equation} \label{eq.def.derive.conormal}
   D_{\nu}^{A} u \ede \sum_{i=1}^{m} \nu_i 
   \Big ( \sum_{j=1}^{m} a_{ij} \partial_j u  + a_{i 0} u \Big )  \,.
\end{equation}

\begin{remark}\label{rem.saut.pan}
If $u \in \check{H}^2(U_0)$, then $D_\nu^{A} u \in \check{H }^1(U_0)$, 
so $D_{\nu}^{A}  : \check H^2(U_0) \to \check H^{1}(U_0)$ 
is well defined because we have assumed that $\nu \in \check W^{1, \infty}(U_0)^m$. Therefore the jump 
$[[D_\nu^{A} u]]_\Gamma \in H^{\frac12}(\Gamma)$ and the trace $D_\nu^{A} u\vert_{\partial \Omega}
 \in H^{\frac12}(\pa U_0)$ are defined.
\end{remark}

It is these last two terms of the above remark that are of greatest interest to us
since they appear in the following simple integration by parts lemma, and then
in the classical formulation of our transmission/boundary value problem,
see Remark~\ref{rem.classical}.

\begin{lemma} \label{lemma.int.partie} 
Let $B = B^{A}$ and $P = P^{A}$ be as in the Definition 
\ref{def.opsP} and $D_\nu = D_\nu^{A}$ be as in Definition~\ref{def.derive.conormal}. 
If $A \in M_{m+1}(\check W^{1, \infty}(U_0))$, then, for all $u \in \check{H}^2(U_0)$ 
and $v \in H^1(U_0) $, we have
\begin{equation*} 
    B(u, v) \seq \scal{Pu}{v}  + 
    \scal{D_\nu u}{v}_{\partial U_0}
    + \scal{[[D_\nu u]]_\Gamma}{v}_\Gamma \,.
\end{equation*}
\end {lemma}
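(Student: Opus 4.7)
The plan is to obtain the identity by applying the classical divergence theorem on each smooth subdomain $U_k$ separately and then reassembling the pieces; this is natural since the regularity hypotheses provide $H^2$ and $W^{1,\infty}$ regularity only piecewise across $\Gamma$. First I would split the defining integral of $B^A(u,v)$ using the decomposition from Equation~\eqref{eq.assumpt.2}:
\begin{equation*}
B^A(u,v) \seq \sum_{k=1}^{N} \int_{U_k} \sum_{i,j=0}^{m} a_{ij} (\pa_j u) \overline{\pa_i v}\, dx .
\end{equation*}

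On each smooth bounded domain $U_k$ I would integrate by parts the derivatives $\pa_i$ with $i \ge 1$ falling on $\overline{v}$, leaving the $i=0$ contributions (where $\pa_0 = id$) untouched. The hypothesis $A \in M_{m+1}(\check W^{1,\infty}(U_0))$ combined with $u \in \check H^2(U_0)$ ensures that each $a_{ij}\pa_j u$ lies in $H^1(U_k)$, so Green's formula on the smooth bounded domain $U_k$ applies. Collecting the interior contributions on each $U_k$ yields
\begin{equation*}
\int_{U_k} \Big( -\!\sum_{i,j=1}^{m}\pa_i(a_{ij}\pa_j u) -\sum_{i=1}^{m}\pa_i(a_{i0}u) +\sum_{j=1}^{m} a_{0j}\pa_j u + a_{00}u \Big) \overline{v}\, dx \seq \int_{U_k} P^A u\, \overline{v}\, dx ,
\end{equation*}
by the formula defining $P^A$ in Definition~\ref{def.opsP}; summing over $k$ then gives $\scal{P^A u}{v}$.

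The boundary terms produced by Green's formula on each $\pa U_k$ are exactly $\int_{\pa U_k} D_\nu^A u\, \overline{v}\, dS$, by Equation~\eqref{eq.def.derive.conormal} with $\nu = \nu^{[k]}$. Using the inclusion $\pa U_k \subset \pa U_0 \cup \Gamma$, I would split each such boundary integral into a $\pa U_0$-part and a $\Gamma$-part. Each piece of $\pa U_0$ belongs to exactly one $\pa U_k$, so these reassemble into $\scal{D_\nu^A u}{v}_{\pa U_0}$. Each connected component $\Gamma_\ell \subset \Gamma$ lies in the closures of precisely two subdomains $U_\pm$, whose outward unit normals satisfy $\nu^{[+]} = \vect{N}$ and $\nu^{[-]} = -\vect{N}$ on $\Gamma_\ell$ by Remark~\ref{rem.side-traces}; summing the two contributions, the sign flip built into $\nu^{[-]} = -\vect{N}$ is exactly what converts the sum of the two outward-pointing flux integrals into the jump integral $\scal{[[D_\nu^A u]]_\Gamma}{v}_\Gamma$.

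No genuine obstacle appears: the argument is essentially a piecewise Green's formula plus bookkeeping. The only point deserving some care is the sign convention on $\Gamma$, namely verifying that combining the piecewise definition of $\nu$ with the jump convention $[[f]]_\Gamma = f^+ - f^-$ does turn the two outward-flux boundary contributions into the jump (rather than their sum), and the required regularity to apply Green's formula on each smooth $U_k$ follows at once from the $\check W^{1,\infty}$ and $\check H^2$ hypotheses via a standard density argument using smooth functions on $\overline{U_k}$.
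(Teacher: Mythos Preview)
Your proposal is correct and is precisely the ``standard integration by parts'' that the paper invokes as its entire proof (with a pointer to \cite{theseHassan, HassanSimonVictor1}): split over the $U_k$, apply Green's formula on each smooth subdomain, and reassemble the interior terms into $\scal{P^A u}{v}$ and the boundary terms into the $\partial U_0$ and $\Gamma$ contributions. Your flag that the only delicate point is the sign bookkeeping on $\Gamma$ is well placed.
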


\begin{proof} 
The proof is a standard integration by parts, as in~\cite{theseHassan, HassanSimonVictor1}.
\end{proof}

We are finally ready to introduce our operators of interest $\maP_k^A$.

\begin{definition} \label{def.opsP.k}
Let $B = B^A : H^1 (U_0)^2 \to \CC$ and $P = P^A : H^1(U_0) \to H^{-1}(U_0)$  
be as in Definition~\ref{def.opsP} and let $A \ede [a_{ij}] \in M_{m+1}(W^{k,\infty}(U_0))$. We then
define $\maP_k^A : V_k \to V_k^-$ as follows:
\begin{enumerate}
\item If $k =0$, then $\maP_0^A : V_0 \to V_0^-\ede V_0^*$ is given by duality by 
$$\langle \maP_0^*u, \phi \rangle_{V_0, V_0^*} \ede B^A(u, \phi)\,.$$

\item If $k \ge 1$, let $D_\nu^A$ be as in Definition~\ref{def.derive.conormal},
then 
\begin{equation*}%\label{eq.def.Pk}
     \maP_k^A u \ede (P^A u, D_{\nu}^{A} u\vert_{\pa_N U_0}, 
    [[D_{\nu}^{A} u]]_\Gamma) \,.
\end{equation*}
\end{enumerate}
\end{definition}

The superscripts $A$ will usually be omitted. We see that, essentially, $\maP_k^A$ is (a restriction of)
$\maP_0^A$. The indices $k$ may thus also be omitted. The following two remarks discus the relation
between the various operators $P^A, \maP^A,$ and $\maP_k^A$, $k \in \ZZ_+$, introduced so far.

\begin{remark} \label{rem.relation0}
Of course, for $k \ge 1$, the operator $\maP_{k+1} = \maP_{k+1}^A$ is obtained from 
$\maP_k = \maP_k^A$ by restriction to $V_{k+1}$, using to the inclusions $V_{k+1} \subset V_{k}$ and 
$V_{k+1}^- \subset V_{k}^-$. In particular, $\maP_k^A u = \maP_{k+1}^A u$ for $u \in V_{k+1}$ and $k \ge 1$.
In fact, the same relation holds also for $k = 0$, using inclusion 
$I : V_1^- \to V_0^-$ defined in Remark~\ref{rem.def.I}. 
Indeed, Lemma~\ref{lemma.int.partie} gives, for all $u \in V_1$ and $v \in V_0$:
\begin{align*} 
    \langle  \maP_0 u, v \rangle_{V_0^*, V_0}  & \, =: \, B(u, v) \\
    &  \seq \scal{Pu}{v}  +  \scal{D_\nu^{A} u}{v}_{\partial U_0}
    + \scal{[[D_\nu^{A} u]]_\Gamma}{v}_\Gamma \\
   &  \seq \scal{Pu}{v}  +  \scal{D_\nu^{A} u}{v}_{\pa_N U_0}
    + \scal{[[D_\nu^{A} u]]_\Gamma}{v}_\Gamma \\
    & \, =: \, \langle I( \maP_1 u), v \rangle_{V_0^*, V_0} 
    \,.
\end{align*}
Thus $I( \maP_1 u) =  \maP_0 u$ for all $u \in V_1$. This relation is crucial, since
the invertibility is easiest to prove for the operator $\maP_0$, whereas is it needed
in applications for $\maP_k$, $k \ge1$, which involves, in addition to the invertibility
of $\maP_0$, a \emph{regularity} result.
\end{remark}

Let us now discuss the relation between the
operators $\maP_k^A$, $k \ge 0$, and the operators $\maP^A$ and $P^A$.

\begin{remark}\label{rem.relation1}
Let $j_0^* : H^1(U_0)^* \to H_D^1(U_0)^* =: V_0^-$  be the dual of the inclusions 
$V_0 := H_D^1(U_0) \subset H^1(U_0)$. Then $\maP_0^A = j_0^* \circ \maP^A\vert_{V_0}$.
Thus, the operator $\maP^A$ determines all the operators $\maP_k^A$, $k \ge 0$.
If there are no Dirichlet boundary conditions (i.e. $\pa_D U_0 = \emptyset$), then $\maP_0^A = \maP^A$,
but, in general, this is not true. Of course, when $\Gamma$ and 
$\pa U_0$ are empty, then $\maP^A = P^A$, as in~\cite{HassanSimonVictor1}, but this case is excluded in
this paper since we are assuming $U_0$ to be compact. In fact, in this paper, none of the operators
$\maP_k^A$ or $\maP^A$ coincides with $P^A$, the ``plain'' differential operator associated to our problem.
\end{remark}

The following remark makes the link between the objects we have introduced so far
and the classical transmission/mixed boundary value problems (Neumann boundary conditions 
on~$\partial_{N} U_0$ and Dirichlet boundary conditions on~$\partial_{D} U_0$).

\begin{remark} \label{rem.classical}
Let $F = (f, g, h) \in V_1^- = L^2(U_0) \oplus H^{1/2}(\pa_N U_0 
\cup \Gamma)$. For $u \in V_1 \subset \check{H}^2(U_0)$, the equation $\maP_1^A u = F$ 
is equivalent to the system~\eqref{eq.classical} of the Introduction.
Thus, if $ \maP_1 : V_1 \to V_1^-$ is invertible, then the solution to this system is therefore 
$u :=  \maP_1^{-1}F$, where $F:=(f,g,h) \in V_{1}^{-}$. If $\maP_0$ is invertible
and the coefficients $A$ are smooth enough, then it is know classically that $\maP_k$
is also invertible, at lest in the classical case of a trivial interface
$\Gamma = \emptyset$, see~\cite{BrezisBookFA, Evans, LionsMagenes1, TaylorPDEI}. See 
\cite{LiNistorQiao, NicaiseBook} and the references therein for the less classical case of a 
non-trivial interface $\Gamma$. Thus, if $F:=(f,g,h) \in V_{k}^{-} 
\subset V_1^{-} \subset V_0^{-}$, then $\maP_k^{-1}F = \maP_1^{-1}F = \maP_0^{-1}F$.
\end{remark}

The purpose of this paper is to obtain a \emph{polynomial estimate in $\|A\|$ and $\|\maP_0^{-1}\|$}
for the norm of $\maP_k^{-1}F$, where $F:=(f,g,h) \in V_{k}^{-}$. To that end, we want to use the approach 
in~\cite{Victor, theseHassan, HassanSimonVictor1}. Some modifications are needed, however, and the next 
sections begin to introduce some of these modifications.

\subsection{Nirenberg's trick}  
We shall need a version of Nirenberg's lemma (or ``trick''), as it is formalized in 
\cite{Victor, HassanSimonVictor1}. Let us first recall some classical definitions. 
We let $\maL(\maX; \maY)$ denote the set of continuous, linear
maps $\maX \to \maY$ between two normed spaces.

\begin{definition}\label{def.conv.so}
Let $\maX$ and $\maY$ be two normed spaces and 
$T_t \in \maL(\maX; \maY)$, $t \ge 0$. We say that $T_t$ \emph{converges strongly}
to $T$ in $\maL(\maX; \maY)$ for $t \searrow 0$ (i.e. $t \to 0$, $t > 0$), if
\begin{align*}
    \forall x \in \maX \,,  \quad \lim_{t \searrow 0} \|T_t x - Tx  \|_{\maY} = 0 \, .
\end{align*}
\end{definition}

We shall need also the following well-known concept~\cite{AmannBook}.

\begin{definition}\label{def.sg}
A family of operators $(S(t))_{t \geq 0} \in \maL(X) := \maL(\maX; \maX)$
is a \emph{strongly continuous semigroup} on $\maX$ if:
\begin{enumerate}[(i)]
\item $S(0)=id_{\maX}$ (the identity map).
\item For all $t, t'\geq0$, we have $S(t+t')=S(t)S(t')$ .
\item $S(t)$ converges strongly to $S(0) = id_\maX$ in $\maL(\maX)$ for $t \searrow 0$.
\end{enumerate}
\end{definition}

\begin{definition}\label{def.gen.inf}
Let $S(t) \in \maL(\maX)$ be a strongly continuous semigroup on a Banach space. 
Its \emph{infinitesimal generator} is the (possibly unbounded) operator
$$L_S \ede \lim_{t \rightarrow 0}\, t^{-1}(S(t)x-x)$$ with domain $\maD(L_S) \subset \maX$ 
the set of $x \in \maX$ for which this limit exists.
\end{definition}

We shall make essential use of the following lemma. (It is a form of ``Nirenberg's trick'', 
see also~\cite{Victor, elast} for further references.)

\begin{lemma}\label{astuceniren}
Let $S_\maX(t) \in \maL(\maX)$ and $S_\maY (t) \in \maL(\maY)$ 
be two strongly continuous semi-groups of operators on Banach spaces 
with infinitesimal generators $L_\maX$ and $L_\maY$, respectively. Let us assume that:
\begin{enumerate}[(i)]
\item $T \in \maL(\maX; \maY)$ is invertible, 

\item for all $t \ge 0$, there exists $T_t \in \maL(\maX, \maY)$ such that 
$T_t S_\maX(t) = S_\maY (t)T$, and 

\item $t^{-1} (T_t-T)$ converges strongly to $Q \in \maL(\maX; \maY)$ for $t \searrow 0$. 
\end{enumerate}
Then $T^{-1}(\maD(L_\maY)) \subset \maD(L_\maX)$, and, most importantly, 
for all $v \in \mathcal{D}(L_\maY)$, we have $$L_\maX (T^{-1} v) \seq 
T^{-1}L_\maY v - T^{-1} Q T^{-1} v \,.$$
\end{lemma}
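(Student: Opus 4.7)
The plan is to use the intertwining relation in hypothesis (ii) to reduce the difference quotient for $S_\maX$ applied to $T^{-1}v$ to the difference quotient for $S_\maY$ applied to $v$, up to an error term that can be controlled thanks to (iii).

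First, I would take $v \in \maD(L_\maY)$ and set $u := T^{-1}v \in \maX$ in the relation $T_t S_\maX(t) u = S_\maY(t) T u$, which by (ii) gives $T_t S_\maX(t) T^{-1}v = S_\maY(t) v$. Adding and subtracting $T$ on the left, then applying $T^{-1}$ (allowed by (i)), yields
\begin{equation*}
    S_\maX(t) T^{-1} v \seq T^{-1} S_\maY(t) v \, - \, T^{-1}(T_t - T)\, S_\maX(t) T^{-1} v\,.
\end{equation*}
Subtracting $T^{-1}v$ from both sides and dividing by $t > 0$ produces
\begin{equation*}
    \frac{S_\maX(t) T^{-1} v - T^{-1} v}{t} \seq T^{-1}\, \frac{S_\maY(t) v - v}{t} \, - \, T^{-1}\, \frac{T_t - T}{t}\, S_\maX(t) T^{-1} v\,.
\end{equation*}

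Next I would pass to the limit $t \searrow 0$. The first term on the right converges in norm to $T^{-1} L_\maY v$ because $v \in \maD(L_\maY)$ and $T^{-1}$ is continuous. For the second term I need to justify that the composition $t^{-1}(T_t - T)\cdot S_\maX(t) T^{-1}v$ converges to $Q T^{-1} v$; this is the only nontrivial step, because both the operator and its argument vary with $t$.

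The main obstacle, and the heart of the argument, is precisely this joint limit. I would bridge it by the Banach--Steinhaus (uniform boundedness) principle: hypothesis (iii) says that $t^{-1}(T_t - T)x$ converges for every $x \in \maX$ as $t \searrow 0$, so the family $\{t^{-1}(T_t - T)\}_{0 < t < \delta}$ is uniformly bounded in $\maL(\maX;\maY)$ for some $\delta > 0$. Splitting
\begin{equation*}
    \frac{T_t - T}{t}\, S_\maX(t) T^{-1}v - Q T^{-1}v \seq \frac{T_t - T}{t} \bigl( S_\maX(t) T^{-1}v - T^{-1}v \bigr) + \Bigl( \frac{T_t - T}{t} - Q\Bigr) T^{-1}v\,,
\end{equation*}
the first summand goes to $0$ by the uniform bound and the strong continuity of $S_\maX$ at $0$, while the second goes to $0$ by (iii) evaluated at the fixed vector $T^{-1}v$.

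Combining the two limits shows that the difference quotient $t^{-1}(S_\maX(t) T^{-1} v - T^{-1} v)$ converges in $\maX$ as $t \searrow 0$, which by definition means $T^{-1}v \in \maD(L_\maX)$, and the value of the limit is exactly $T^{-1} L_\maY v - T^{-1} Q T^{-1} v$, completing the proof.
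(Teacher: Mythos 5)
Your proof is correct: the intertwining identity from (ii), the splitting of $t^{-1}(T_t-T)S_\maX(t)T^{-1}v - QT^{-1}v$ into a term controlled by Banach--Steinhaus plus strong continuity of $S_\maX$ at $0$ and a term controlled by (iii) at the fixed vector $T^{-1}v$, is exactly the standard argument; the paper itself omits the proof and refers to its citations, which proceed along these same lines. The only point worth making explicit is that the uniform bound on $\|t^{-1}(T_t-T)\|$ for $t$ near $0$ follows from (iii) by applying the uniform boundedness principle along sequences $t_n \searrow 0$ (pointwise convergence gives pointwise boundedness only along such sequences, not a priori on a whole interval), a routine refinement that does not affect the validity of your argument.
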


See~\cite{Victor, HassanSimonVictor1} for a proof.
It also follows from the proof that the induced operator $ \tilde T:= T\vert_{\maD(L_\maX)}  : 
\maD(L_\maX)\rightarrow \maD(L_\maY)$ is well-defined, continuous, and bijective, but this
fact will not be used in this paper.

\section{Localization, Sobolev spaces, and diffeomorphism groups}  
\label{sec3}

In order to be able to use the approach in~\cite{Victor, theseHassan, HassanSimonVictor1}, 
as explained in the last section, we
want to localize our functions and operators to coordinate neighborhood
patches. This is the usual approach when dealing with regularity questions.
We are faced however, with the following problem. Although we 
can localize the definitions of Sobolev spaces and of the operator 
$\maP_0$, we cannot do the same for $\maP_0^{-1} $. So, we 
will need a localization construction for derivatives (or vector fields) that will replace the 
canonical derivatives $\pa_j$ coming from coordinates with other vector fields. 
This accounts for the greatest difference 
between this paper and~\cite{theseHassan, HassanSimonVictor1}. It is linked to 
the fact that we must use general vector fields to study our Sobolev spaces (instead 
of the standard derivatives $\partial_\ell$, as in~\cite{Mirela1}).

\subsection{Localization of vector fields and of norms}
Let $\overline{U}_0 = \bigcup_{j=1}^N \overline{U}_j$, as before, see Assumptions 
\ref{assum.decomposition}. Recall that all boundaries $\partial U_j$ are smooth, 
$j = 0, 1, \ldots, N$. (This implies that the 
boundary $\partial U_0$ of our domain and the interface $\Gamma := (\bigcup_{j=1}^N \partial U_j ) 
\smallsetminus \pa U_0$ are smooth and closed.)  We let $\RR_\pm^m := \{ \pm x_m \ge 0\}$
be the two half-spaces defined by the hyperplane $\{x_m = 0\}$.
The following objects and assumptions will remain in place throughout the rest of
this paper (some of which have already been used).

\begin{notation} \label{not.new}
We shall use the following notation for the following fixed from now on objects:
\begin{enumerate}
\item $\maV := (V_\alpha)_{\alpha \in I}$ is a finite, open covering of $\overline{U}_0$
(so $V_\alpha \subset \RR^m$ is open and $\overline{U}_0 \subset \bigcup_\alpha V_\alpha$).

\item $\phi_\alpha : W_\alpha \stackrel{\sim}\longrightarrow W_\alpha' \subset \RR^m$
are some fixed from now on diffeomorphisms (coordinate charts) of open subsets with 
$\overline{V}_\alpha \subset W_\alpha$.

\item $\Gamma_k$ are the connected components of $\Gamma$ (so $\Gamma = \cup \Gamma_k$, a disjoint union).
\end{enumerate}
\end{notation}

In addition to this specific notation, we shall let $\CIc(W)$ denote the set of smooth, compactly supported
functions on an open set $W$, as usual. The objects introduced in Notation~\ref{not.new} satisfy the following properties.

\begin{assumptions} 
\label{assumpt.new}
In addition to Assumptions~\ref{assum.decomposition}, we assume that the objects introduced in Notation~\ref{not.new}
satisfy the following properties:
\begin{enumerate}
  \item If $W_\alpha \cap \partial U_0 \neq \emptyset$, 
  then  $W_\alpha \cap \Gamma = \emptyset$ and 
\begin{equation*}
    \phi_\alpha( W_\alpha \cap U_0 ) \subset \RR_+^m \quad \text{ and } \quad
    \phi_\alpha(W_\alpha \cap \partial U_0) \subset \RR^{m-1} \seq \partial \RR_+^m \,.
\end{equation*}

\item If $W_\alpha \cap \Gamma \neq \emptyset$, then
$W_\alpha \cap \partial U_0 = \emptyset$, there is only one $k$ such that 
$W_\alpha \cap \Gamma_k  \neq \emptyset$, and
\begin{equation*}
    \phi_\alpha(W_\alpha \cap \Gamma_k) \subset \RR^{m-1} \seq \{ x_m = 0 \} \,.
\end{equation*}
Moreover, if $\Gamma_k \subset
\pa U_j$, then, for some $\epsilon \in \{+, -\}$, we have
\begin{equation*}
    \phi_\alpha(W_\alpha \cap U_j) \subset \RR_\epsilon^{m-1} \ede \{ \epsilon x_m \ge 0 \} \,.
\end{equation*}

\item $\tilde \psi_\alpha \in \CIc(W_\alpha)$ is such that $\tilde \psi_\alpha = 1$ on $V_\alpha$.

\item We assume that both $\phi_\alpha$ and $\phi_\alpha^{-1}$ have components in $W^{\infty, \infty}$.
\end{enumerate}
\end{assumptions}

Thus each of the sets $W_\alpha$ interesects at most one of $\pa U_0$ or $\Gamma_k$. It also
follows that $\overline{V}_\alpha \subset W_\alpha$, so $\maW := (W_\alpha)_{\alpha \in I}$
is also an open covering of $\overline{U}_0$. Such finite coverings $\maV := (V_\alpha)_{\alpha \in I}$ 
and $\maW := (W_\alpha)_{\alpha \in I}$ and the functions $\tilde \psi_\alpha$ exist by standard 
topology results since $U_0$ is bounded and all boundaries $\pa U_j$, $j=0, \ldots, N$, were
assumed to be smooth, see Assumptions~\ref{assum.decomposition}.

\subsubsection{Localizing vector fields} 
The role of the new coordinates $\phi_\alpha$, $\alpha \in I$, is to recover the setting considered 
in~\cite{HassanSimonVictor1, theseHassan}. That setting has already been dealt with in a uniform,
comprehensive way. Most importantly, it allows us to replace the partial derivatives $\pa_j$ 
\cite{HassanSimonVictor1, theseHassan} with
some other vector fields that we introduce next.

\begin{remark}\label{rem.def.X.vect}
See Notation~\ref{not.new} and Assumptions~\ref{assumpt.new} for the notation and let, for each $\alpha \in I$,
\begin{equation*}
\begin{gathered}
   Y_j^\alpha \ede \phi_{\alpha*}^{-1}(\pa_j)  \quad \mbox{and}    \\   
   X_j^\alpha \ede \tilde \psi_\alpha Y_j^\alpha \seq \tilde \psi_\alpha \phi_{\alpha*}^{-1}(\pa_j)\,,
\end{gathered}
\end{equation*}
where $\phi_{\alpha*}^{-1}(\pa_j)$ is the push-forward of the standard vector field $\pa_j := \frac{\pa}{\pa x_j}$, 
$j = 1, \ldots, m$, via the differentiable map $\phi_\alpha^{-1}$. (More explicitly, $X_j^\alpha \seq \tilde \psi_\alpha 
\sum_{i=1}^m \pa_j( \phi^{-1}_{\alpha i}) \pa_i,$ where $\phi^{-1}_{\alpha j}$ are the components of $\phi^{-1}$.)
We also obtain that $X_j^\alpha$ is tangent to the boundary of~$U_k$ for all $j < m$ and all $k$. 
Also, all vector fields $X_j^\alpha$, $j = 1, \ldots, m$, $\alpha \in I$, 
are defined \emph{everywhere} on $\overline{U}_0$ (unlike the vector fields $Y_j^\alpha$,
which will play an auxiliary role). For the same reason for which we
have introduced $\pa_0 := id$, we let $X_0^\alpha = id$, since this will simplify some
formulas.
\end{remark}

\subsubsection{Localizing Sobolev spaces}
We can localize well also the definitions of our Sobolev spaces In the standard way.

\begin{remark}\label{rem.def.norm.sob.var} %\label{rem.eq.def.esb.n}
We continue to use the notation of Notation~\ref{not.new} and Assumptions~\ref{assumpt.new}. 
The subdomains $(U_j)_{j=1}^{N}$ of $U_0$ and the given coordinate charts 
(diffeomorphisms) $\phi_\alpha : W_\alpha \to W_\alpha'$ induce decompositions on 
$W_\alpha^\prime = \phi_\alpha(W_\alpha)$, which hence define broken Sobolev spaces on 
these domains. Let $V_{\alpha}^{\prime}  := \phi_\alpha(V_\alpha)$. Then the finite, open cover 
$\maV = (V_\alpha)_{\alpha \in I}$, the coordinate charts  $\phi_\alpha$, and the functions
$\tilde \psi_\alpha$ define 
equivalent norms on the broken Sobolev spaces $\check{H}^{s}(U_0) \ede \bigoplus_{j=1}^N H^{s}(U_j)$ 
and $\check{W}^{s, \infty}(U_0) \ede \bigoplus_{j=1}^N W^{s, \infty}(U_j)$:
\begin{equation*} %\label{eq.def.esb.n}
\begin{gathered}
  % |u|_{\check H^s, \maV}
  \nH{s}{u}
  \ede \sum_{\alpha \in I } \| u \circ \phi_\alpha^{-1} 
  \|_{\check H^s(V_\alpha')}\\
%
  % |u|_{\maV, s} 
  \nW{s}{u}
  \ede \max_{\alpha \in I} \| u \circ \phi_\alpha^{-1} 
  \|_{\check W^{s, \infty}(V_\alpha')}
  \,, \quad s \ge 0\,.
\end{gathered}  
\end{equation*}
The fact that these norms are equivalent to the original ones is a very standard 
calculation using the fact that we have a finite covering and that
each diffeomorphism $\phi_\alpha\vert_{V_\alpha} : V_\alpha \to \phi_\alpha(V_\alpha) \subset
W_\alpha'$ extends (obviously) to the larger open subset $W_\alpha$ containing $\overline{V}_\alpha$. 
Very similar proofs can be found, for instance, in~\cite{babuska} in a similar setting 
and in~\cite{AGN1, Grosse, TriebelBG} for infinite coverings in the framework of bounded geometry.
\end{remark}

%%%%%%%%%%%%%%% END %%%%%%%%%%%%%%%%%%%%%%%%

In order to define similar equivalent norms for the coefficient matrices,
it will be convenient to use the following matrix norm:

\begin{definition}\label{def.normop.var}
Let $\maO \subset U_0$. For a matrix $A = [a_{ij}] \in M_{m+1}(\check W^{k,\infty}(\maO))$, we let
\begin{align*}
  \|A\|_{\check W^{k,\infty}(\maO)}  := \max_{0 \leq i \leq m} \Big \{ \sum_{j=0}^m 
  \| a_{ij} \|_{\check W^{k,\infty}(\maO)} ,  \sum_{j=0}^m \| a_{ji} \|_{\check W^{k,\infty}(\maO)} 
  \Big \} \ .
\end{align*}
\end{definition}

We next define analogous norms to those introduced in Remark~\ref{rem.def.norm.sob.var}.
Let $\tilde \psi_\alpha$ be as in Assumptions~\ref{assumpt.new} (and in the definition of the vector fields $X_j^\alpha$,
Remark~\ref{rem.def.X.vect}).

\begin{remark}\label{rem.def.norm.A.var}
Let $A^\alpha \in M_{m+1}(W^{k,\infty}(W_\alpha'))$ be the coefficients corresponding to the form 
induced by $B^A$ on $\CIc(W_\alpha')$. (The precise definition of $A^\alpha$, while standard, is technical and hence 
is recalled after the following definition.) We let 
\begin{equation*}
   \nnW{k}{A} \ede \max_{\alpha \in I} \| A^\alpha \|_{\check{W}^{k} (W_\alpha') }.
\end{equation*}
As in Remark  the norms $\nnW{k}{\cdot}$ and $\| \cdot \|_{\check W^{k,\infty}}$ are equivalent on 
$\maL(V_k; V_k^-)$.
\end{remark}

\subsubsection{Localizing the cofficients $A$}\label{sssec.aalpha}

Let $P^A := \sum_{i,j = 0}^m \pa_i^* a_{ij} \pa_j$ as before
(see also Remark~\ref{rem.d0} for notation and further explanations). 
For each $\alpha \in I$, the diffeomorphism $\phi_\alpha : W_\alpha \to  W_\alpha'$ induces an
isomorphism $\phi_\alpha^* : \CIc( W_\alpha') \to  \CIc( W_\alpha)$ by $\phi_\alpha^*(f) := f \circ \phi_\alpha$.
A vector field $X$ on $W_\alpha$ can be defined as a derivation $X : \CIc(W_\alpha) \to \CIc(W_\alpha)$.
As such, it can uniquely be written as $X = \sum_{j=1}^m X_j \pa_j$, with $X_j \in \CI(W_\alpha)$.
It induces a vector field 
\begin{equation}\label{eq.expansion}
     \phi_{\alpha *}(X) := \phi_\alpha^{*-1} \circ X \circ \phi_\alpha^* =: Y = \sum_{j=1}^m Y_j \pa_j
\end{equation}
on $W_\alpha'$. We can apply this construction to any differential operator
$Q : \CIc( W_\alpha') \to \CIc( W_\alpha')$ to obtain a differential operator
\begin{equation}\label{eq.def.mor.phi}
    \phi_{\alpha *}(Q) := \phi_\alpha^{*-1} \circ Q \circ \phi_\alpha^* : \CIc( W_\alpha') \to \CIc( W_\alpha')
\end{equation}
In particular, we can do that for our differential operator $P^A$
(or, more precisely, to its restriction to $\CIc( W_\alpha)$) to obtain a differential operator 
$\phi_{\alpha *}(P^A) : \CIc( W_\alpha') \to \CIc( W_\alpha')$. It is,
of course of the form $P^{A^\alpha} \in M_{m+1}(W_\alpha')$, but $A^\alpha$ is not uniquely determined
(see Remark~\ref{rem.d0}). However, by using that $\phi_{\alpha *}$ is an algebra morphism, the unique
writing of Equation~\eqref{eq.expansion} applied to each of the vector fields $\phi_{\alpha *}(\pa_j)$,
$j = 1, \ldots, m$, and that $\phi_{\alpha *}(\pa_0) = \pa_0 = id$, we obtain a \emph{canonical}
$A^\alpha = [a_{ij}^\alpha]$ :
\begin{align*}
     \phi_{\alpha *}(P^A)\ & \seq \sum_{i,j = 0}^m \phi_{\alpha *}(\pa_i^*) \, (a_{ij} \circ \phi_\alpha^{-1})\, 
     \phi_{\alpha *}(\pa_j) \\
     & \seq \sum_{i,j = 0}^m \pa_i^*\, a_{ij}^\alpha\, \pa_j \ = : P^{A^\alpha} \ \,.
\end{align*}
In particular, $A^\alpha$ depends linearly on $A$. Indeed, let $\phi_{\alpha}(\pa_i) = \sum_{j=1}^m Z_{ij} \pa_j$
and $\phi_{\alpha}(\pa_i^*) = \sum_{j=1}^m Z_{ij}^* \pa_j$. Let $Z_{00} = Z_{00}^*$ and $Z_{0i} = Z_{0i}^* =
Z_{j0} = Z_{j0}^*$ if $i, j \ge 1$. Then
\begin{equation}\label{eq.def.Aalpha}
     a_{kl}^{\alpha} \seq \sum_{ij} Z_{ik}^* a_{ij} \circ \phi_\alpha^{-1} Z_{jl} \,.  
\end{equation}
By applying the inverse morphis $\phi_{\alpha *}$
to the above equation, we obtain

\begin{lemma} \label{lemma.local.alpha}
Recall that $Y_j^\alpha := \phi_{\alpha *}^{-1}(\pa_j)$, $j = 1, \ldots, m$. 
Let $Y_j^{\alpha *} := -Y_j^\alpha$, if $j = 1, \ldots, m$.
Let also $Y_0^\alpha = Y_0^{\alpha*} = id$. Then, the canonical
matrix $A^\alpha = [a_{ij}^\alpha]$ of Equation~\eqref{eq.def.Aalpha} satisfies
$A^\alpha \in M_{m+1}(\check W^{k,\infty}(W_\alpha'))$ and
\begin{equation*}
     P^A \seq \sum_{i, j = 0}^m Y_i^{\alpha *} \, (a_{ij}^\alpha \circ \phi_\alpha)\, Y_j^\alpha
\end{equation*} 
as differential operators on $W_\alpha$.
\end{lemma}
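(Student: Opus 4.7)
The proof is essentially a direct algebraic manipulation applying the inverse of the pushforward morphism to the equation that defines $A^\alpha$. First, I would note the key structural fact underlying the argument: the pushforward $\phi_{\alpha *}$, defined in \eqref{eq.def.mor.phi} as conjugation by $\phi_\alpha^*$, is an algebra isomorphism from the algebra of differential operators on $W_\alpha$ onto that on $W_\alpha'$. Consequently, its inverse $\phi_{\alpha *}^{-1}$ is also an algebra morphism; by the definition of the $Y_j^\alpha$ it carries $\pa_j$ to $Y_j^\alpha$, and it sends the multiplication operator by a function $f \in \CI(W_\alpha')$ to multiplication by $f \circ \phi_\alpha$.

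Next, starting from the defining relation of $A^\alpha$, namely
\[
    \phi_{\alpha *}(P^A) \seq \sum_{i,j=0}^m \pa_i^* \, a_{ij}^\alpha \, \pa_j,
\]
I would apply $\phi_{\alpha *}^{-1}$ to both sides. Using the algebra morphism property term by term, this gives
\[
    P^A \seq \sum_{i,j=0}^m \phi_{\alpha *}^{-1}(\pa_i^*) \, \phi_{\alpha *}^{-1}(a_{ij}^\alpha) \, \phi_{\alpha *}^{-1}(\pa_j).
\]
For the adjoint factors, since $\pa_j^* = -\pa_j$ for $j \ge 1$ and $\pa_0^* = \pa_0 = id$, linearity of $\phi_{\alpha *}^{-1}$ together with the definition $Y_j^{\alpha*} := -Y_j^\alpha$ (for $j \ge 1$) and $Y_0^{\alpha*} := id$ gives immediately $\phi_{\alpha *}^{-1}(\pa_j^*) = Y_j^{\alpha*}$. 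Combined with $\phi_{\alpha *}^{-1}(a_{ij}^\alpha) = a_{ij}^\alpha \circ \phi_\alpha$ for the multiplication operator, this yields precisely the stated formula for $P^A$ on $W_\alpha$.

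For the regularity assertion $A^\alpha \in M_{m+1}(\check W^{k,\infty}(W_\alpha'))$, I would invoke the explicit formula \eqref{eq.def.Aalpha}: each entry $a_{kl}^\alpha$ is a finite sum of products of the form $Z_{ik}^* \, (a_{ij} \circ \phi_\alpha^{-1}) \, Z_{jl}$, where the coefficients $Z_{ij}$ and $Z_{ij}^*$ are polynomial expressions in the first-order partial derivatives of $\phi_\alpha^{-1}$. By the last item of Assumptions~\ref{assumpt.new}, these derivatives lie in $W^{\infty, \infty}$; since composition with the smooth diffeomorphism $\phi_\alpha^{-1}$ and multiplication by $W^{\infty, \infty}$ functions preserve the broken Sobolev class $\check W^{k,\infty}$ (the decomposition of $W_\alpha'$ being the pushforward by $\phi_\alpha$ of the induced decomposition of $W_\alpha$), the conclusion follows from a standard chain-rule computation.

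There is no genuine obstacle in this argument; the lemma is essentially a bookkeeping statement asserting that the coordinate-change transformation of $P^A$ can be repackaged cleanly in terms of the localized vector fields $Y_j^\alpha$. The only care required is in tracking the sign convention $Y_j^{\alpha*} = -Y_j^\alpha$ and verifying that it indeed corresponds under $\phi_{\alpha *}^{-1}$ to the formal transpose $\pa_j^*$, which is immediate from linearity of $\phi_{\alpha *}^{-1}$ and the identity $\pa_j^* = -\pa_j$ used throughout Remark~\ref{rem.d0}.
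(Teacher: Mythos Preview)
Your proposal is correct and follows exactly the approach the paper indicates: the paper itself states, immediately before the lemma, that the result is obtained ``by applying the inverse morphism $\phi_{\alpha *}$ to the above equation,'' and gives no further details. Your argument simply makes explicit this one-line justification, correctly tracking how $\phi_{\alpha *}^{-1}$ acts on each factor and supplying the routine verification of the $\check W^{k,\infty}$ regularity from formula~\eqref{eq.def.Aalpha} and Assumptions~\ref{assumpt.new}.
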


\subsection{Sobolev spaces and vector fields}
\label{sec.sobolev}

It will be convenient to use the following shorthand notation for
the following two norms that often appear in our results.

\begin{notation}\label{not.def.normop2} We let
\begin{equation*}
    \| T_1 \|_k \ede \| T_1 \|_{\maL(V_{k}; V_{k}^-)} \qquad \text{and}
    \qquad ||| T_2|||_k \ede \| T_2 \|_{\maL(V_{k}^-; V_{k})} \,.
\end{equation*}
\end{notation}

We shall write $a \lesssim b$ if there is a $C$ that depends only on the order of the
Sobolev spaces, on $\maV$, and $m$ such that $a \le C b$. In particular, in these
inequalities, $C$ is
independent of the variables belonging to the various functions spaces involved.

\begin {lemma} \label{lemma.cont} Let $k \ge 0$.
Let $a \in \check W^{k,\infty}(U_0)$
and $f \in \check H^{k}(U_0)$. Then, $a f \in \check H^{k}(U_0)$
and $\|af\|_{\check H^k} \lesssim \|a \|_{\check W^{k,\infty}} \|f\|_{\check H^k}$.
Consequently, 
\begin{align*}
   \| \maP_k \|_k \lesssim \|A\|_{\check W^{k,\infty}} \quad \mbox{and} \quad \| \maP_k \|_k \lesssim \nnW{k}{A}\,.
\end{align*}
\end {lemma}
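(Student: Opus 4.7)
The plan is to reduce the first assertion to a classical Sobolev multiplication estimate piece by piece, then feed it into the explicit formulas for $P^A$ and $D_\nu^A$.

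For the multiplication claim, because $\check H^k(U_0)$ and $\check W^{k,\infty}(U_0)$ are direct sums indexed by the subdomains $U_j$, it suffices to prove that $a \in W^{k,\infty}(U_j)$ and $f \in H^k(U_j)$ implies $af \in H^k(U_j)$ with $\|af\|_{H^k(U_j)} \lesssim \|a\|_{W^{k,\infty}(U_j)}\|f\|_{H^k(U_j)}$, then sum over $j$. On each $U_j$, I apply the Leibniz formula
\begin{equation*}
   \partial^\beta(af) \ = \ \sum_{\gamma \le \beta} \binom{\beta}{\gamma}(\partial^\gamma a)(\partial^{\beta-\gamma}f),
\end{equation*}
estimate $\|\partial^\gamma a\|_{L^\infty(U_j)} \le \|a\|_{W^{k,\infty}(U_j)}$, take the $L^2$ norm of $(\partial^{\beta-\gamma}f)$, and sum over $|\beta|\le k$ and $\gamma \le \beta$. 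The resulting combinatorial constant depends only on $k$ and $m$, which is exactly what $\lesssim$ hides.

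For $\|\maP_k\|_k \lesssim \|A\|_{\check W^{k,\infty}}$, I treat $k=0$ separately: Cauchy–Schwarz on $B^A(u,\phi) = \sum_{i,j=0}^m \int_{U_0} a_{ij}\,\partial_j u\,\overline{\partial_i\phi}\,dx$ yields $|B^A(u,\phi)| \lesssim \|A\|_{L^\infty}\|u\|_{H^1}\|\phi\|_{H^1}$, so $\|\maP_0 u\|_{V_0^*} \lesssim \|A\|_{L^\infty}\|u\|_{H_D^1}$. For $k \ge 1$, I use the explicit expression for $P^A u$ (Definition~\ref{def.opsP}) and for $D_\nu^A u$ (Equation~\eqref{eq.def.derive.conormal}). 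For $u \in V_k \subset \check H^{k+1}(U_0)$, each $\partial_j u$ lies in $\check H^k(U_0)$, so by the multiplication estimate $a_{ij}\,\partial_j u \in \check H^k(U_0)$ with the appropriate bound, and applying one more derivative $\partial_i$ drops the regularity to $\check H^{k-1}(U_0)$. This controls the $\check H^{k-1}$ component of $\maP_k u$. For the trace components, $D_\nu^A u$ already lies in $\check H^k(U_0)$ with norm $\lesssim \|A\|_{\check W^{k,\infty}}\|u\|_{V_k}$; the usual trace theorem on each $U_j$ (recall $\partial_N U_0$ and $\Gamma$ are smooth and disjoint, and $\nu$ is smooth up to the boundary) gives the $H^{k-1/2}(\partial_N U_0 \cup \Gamma)$ bound on the restriction and on the jump.

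The second inequality $\|\maP_k\|_k \lesssim \nnW{k}{A}$ is then immediate from the equivalence of $\nnW{k}{\cdot}$ and $\|\cdot\|_{\check W^{k,\infty}}$ recorded in Remark~\ref{rem.def.norm.A.var}, which itself follows from the finiteness of the cover $\maV$ together with the smoothness assumption on the $\phi_\alpha$. The only mild subtlety, and the step that deserves a line of care in the write-up, is verifying that the trace/jump maps on $\partial_N U_0$ and $\Gamma$ have operator norm bounded by a geometric constant (independent of $A$), so that all constants absorbed in $\lesssim$ depend solely on $k$, $m$, and $\maU$ as required.
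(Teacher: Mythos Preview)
Your proposal is correct and follows essentially the same approach as the paper, only with considerably more detail: the paper's own proof simply declares the multiplication estimate ``well known,'' says the bound $\|\maP_k\|_k \lesssim \|A\|_{\check W^{k,\infty}}$ is ``a consequence of the first part,'' and derives the last inequality from the norm equivalence in Remark~\ref{rem.def.norm.A.var}. Your Leibniz-rule argument on each $U_j$, the separate treatment of $k=0$ via Cauchy--Schwarz on $B^A$, and the use of the trace theorem for the $D_\nu^A$ components are exactly the details one would fill in to make the paper's sketch rigorous.
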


\begin{proof}
The first part is well known. The second part is a consequence of the first part. The 
third part is a consequence of the first part and of 
the definitions of the norms in local coordinates. Indeed
\begin{equation*}
   \|\maP_k^A u \|_{V_k^-} \leq \|\maP_k^A\|_k \|u\|_{V_k} \lesssim \|A\|_{W^{k,\infty}} \|u\|_{V_k}
   \lesssim \nnW{k}{A} \|u\|_{V_k} \,,
\end{equation*}
where the last inequality is due to the equivalence of the norms $\| \cdot \|_{\check W^{k,\infty}}$
and $\nnW{k}{\cdot}$ of Remark
\ref{rem.def.norm.A.var}.
\end{proof}

%\subsection{Sobolev spaces and vector fields}
The biggest difference between this paper and~\cite{HassanSimonVictor1}
is that we must use vector fields to study our Sobolev spaces
(instead of standard derivatives~$\partial_\ell$). Moreover, well
that we can localize the definitions of the Sobolev spaces and of the operators
$\maP_k$, we cannot localize the definition of~$\maP_0^{-1}$.

\begin{definition}\label{def.amm} 
Let $A \in M_{m+1}(\check W^{k,\infty}(U_0))$ and
$A^\alpha = [a_{ij}^\alpha]  \in M_{m+1}(\check W^{k,\infty}(W_\alpha'))$ be as in Remark
\ref{rem.def.norm.A.var}  
\begin{equation*}
   \nnW{k}{a_{mm}^{-1}} \ede \max_\alpha 
   \| \big (a_{mm}^\alpha \big)^{-1} \|_{\check{W}^{k} (W_\alpha') } \,.
\end{equation*}
Note also that one may have $\nnW{k}{a_{mm}^{-1}} := \infty$ (in which case $a_{mm}^\alpha$ 
is not invertible everywhere on 
all $W_\alpha$).
\end{definition}

We will need the following results which give new descriptions
broken Sobolev spaces. We also set $X_0^\alpha = id$, to simplify the notation.

\begin{lemma}\label{lemma.product.ge2}
Let $A \in M_{m+1}(\check W^{k,\infty}(U_0))$ and
$A^\alpha = [a_{ij}^\alpha] \in M_{m+1}(\check W^{k,\infty}(W_\alpha'))$ be as in Remark
\ref{rem.def.norm.A.var} and
$\maP_k := \maP_{k}^{A} : V_{k} \to V_{k}^-$ the operators associated to $A$ (see Section~\ref{ssec.Pk}),
which is assumed to be invertible. Hence
\begin{align*}
    1 \lesssim & |||  \maP_k^{-1} |||_{k}  \,  \nnW{k}{A} 
    \quad \text{ et }\ \quad 1 \le  \nnW{k}{a_{mm}^{-1}} 
    \nnW{k}{A} \lesssim \nnW{k}{a_{mm}^{-1}} 
    \| A\|_{\check W^{k,\infty}} \, .
\end{align*}
\end{lemma}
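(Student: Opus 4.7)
The plan is to derive the three component inequalities from elementary submultiplicativity arguments combined with the preceding lemmas and definitions.

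For the first inequality, I would start from the identity $\maP_k \circ \maP_k^{-1} = \mathrm{id}_{V_k^-}$ (valid since $\maP_k$ is assumed invertible). Taking operator norms and using submultiplicativity in $\maL(V_k^-)$ gives
$$ 1 \seq \| \mathrm{id}_{V_k^-} \|_{\maL(V_k^-)} \le \| \maP_k \|_{k} \cdot ||| \maP_k^{-1} |||_{k}. $$
Combined with the bound $\| \maP_k \|_{k} \lesssim \nnW{k}{A}$ furnished by Lemma~\ref{lemma.cont}, this produces $1 \lesssim ||| \maP_k^{-1} |||_{k} \, \nnW{k}{A}$.

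For the middle inequality, the crucial pointwise identity is $a_{mm}^\alpha \cdot (a_{mm}^\alpha)^{-1} \equiv 1$ on $W_\alpha'$ (if $\nnW{k}{a_{mm}^{-1}} = \infty$ there is nothing to prove). Since $\check W^{k,\infty}(W_\alpha')$ is a Banach algebra with submultiplicative norm (with constant absorbed into $\lesssim$) and $\| 1 \|_{\check W^{k,\infty}} = 1$, applying the product estimate to this identity yields
$$ 1 \le \| a_{mm}^\alpha \|_{\check W^{k,\infty}(W_\alpha')} \cdot \| (a_{mm}^\alpha)^{-1} \|_{\check W^{k,\infty}(W_\alpha')}. $$
Next, inspecting Definition~\ref{def.normop.var} with $i = m$, the term $\| a_{mm}^\alpha \|_{\check W^{k,\infty}}$ appears in the sum $\sum_{j=0}^m \| a_{mj}^\alpha \|_{\check W^{k,\infty}}$, so $\| a_{mm}^\alpha \|_{\check W^{k,\infty}} \le \| A^\alpha \|_{\check W^{k,\infty}}$. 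Taking the maximum over $\alpha \in I$ of both bounds and combining them produces $1 \le \nnW{k}{a_{mm}^{-1}} \, \nnW{k}{A}$. The last inequality then follows at once by multiplying the norm equivalence $\nnW{k}{A} \lesssim \| A \|_{\check W^{k,\infty}(U_0)}$ from Remark~\ref{rem.def.norm.A.var} by $\nnW{k}{a_{mm}^{-1}}$.

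No step is deep; the only point requiring attention is the submultiplicativity of the $\check W^{k,\infty}$ norm, which for $k \ge 1$ holds only up to a $k$-dependent constant coming from the Leibniz rule, but such constants are absorbed into the $\lesssim$ convention. Everything else amounts to unwinding the definitions of the various norms and invoking Lemma~\ref{lemma.cont} for the operator-norm estimate on $\maP_k$.
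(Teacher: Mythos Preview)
Your proposal is correct and follows essentially the same approach as the paper's proof: submultiplicativity of operator norms combined with Lemma~\ref{lemma.cont} for the first inequality, and the analogous pointwise identity $a_{mm}^\alpha (a_{mm}^\alpha)^{-1}=1$ for the second (the paper merely says ``the second part is similar''). One small remark: for the middle inequality you do not actually need the full Banach-algebra submultiplicativity of $\check W^{k,\infty}$; since the statement asks for a sharp $\le$, it suffices to observe $1 = \|a_{mm}^\alpha (a_{mm}^\alpha)^{-1}\|_{L^\infty} \le \|a_{mm}^\alpha\|_{L^\infty}\,\|(a_{mm}^\alpha)^{-1}\|_{L^\infty}$ and then pass to the larger $\check W^{k,\infty}$ norms, which sidesteps your Leibniz-constant concern entirely.
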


\begin{proof}  
The submultiplicativity of the norms, Lemma~\ref{lemma.cont},
and the equivalence of the norms of $A$ in 
Remark~\eqref{rem.def.norm.A.var} (in this order) give that
\begin{align*}
    1 \le ||| P^{-1} |||_{k}  \| P\|_{k} \lesssim |||  \maP_0^{-1} |||_{k}  \| A\|_{\check{W}^{k}}   
    \lesssim |||  \maP_0^{-1} |||_{k} \nnW{k}{A} \,.  
\end{align*}
The second part is similar.
\end{proof}

The following lemma is crucial computational step for our estimates, 
it generalizes Lemma 5.1 of~\cite{HassanSimonVictor1}. 
In the following, we include the domain $U_0$ in the norms on $U_0$, unlike
in most of the other formulas.

\begin {lemma} \label{lemma.sobolev1et2}
Recall the vector fiels $X_j^\alpha$, $j = 0, \ldots, m$, $\alpha \in I$ of
\ref{rem.def.X.vect} and that $X_0^\alpha = id$.  We use also the notation of 
Lemma~\ref{lemma.product.ge2}.
\begin{enumerate}
\item An equivalent norm on $\check H^{k+2}(U_0)$ is given by $$\|u\|' \ede \sum_{\alpha \in I}
\sum_{j = 0}^m \|X_j^\alpha u\|_{\check H^{k+1}(U_0)}\,.$$

\item Similarly, an equivalent norm on $\check H^{k+2}(U_0)$
is given by $$\|u\|'' \ede \sum_{\alpha \in I} \sum_{i, j=0}^m 
\|X_i^\alpha X_j^\alpha u\|_{\check H^{k}(U_0)} \,.$$

\item Let $A$ and $P = P^A := \sum_{i, j = 0}^m \pa_i^* a_{ij} \pa_j$ be as in 
Definition~\ref{def.opsP} and $\nnW{k}{a_{mm}^{-1}}$ 
be as in Definition~\ref{def.amm}. Then
\begin{equation*}
   \|u\|_{\check H^{k+2}(U_0)} \lesssim  
   \nnW{k}{a_{mm}^{-1}} \Big ( \|P u\|_{\check H^{k}(U_0)}
   + \|A\|_{\check W^{k+1,\infty}(U_0)} \sum_{j=0}^{m-1} \|X_j^\alpha u\|_{\check H^{k+1}(U_0)} \Big)   
\,.
\end{equation*}
\end{enumerate}
\end {lemma}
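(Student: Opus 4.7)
My plan is to handle parts (1) and (2) by a standard change of coordinates argument, and then reduce part (3) to part (2) by inverting the principal normal part of the equation.

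For parts (1) and (2), I will start from the equivalence
$\|u\|_{\check H^{k+2}(U_0)} \approx \sum_\alpha \|u\circ\phi_\alpha^{-1}\|_{\check H^{k+2}(V_\alpha')}$
provided by Remark~\ref{rem.def.norm.sob.var}, then use the standard Euclidean characterization
$\|v\|_{\check H^{k+2}(V_\alpha')} \approx \|v\|_{L^2(V_\alpha')} + \sum_{j=1}^{m}\|\partial_j v\|_{\check H^{k+1}(V_\alpha')}$
(and analogously with second-order derivatives for part (2)). Setting $v = u\circ\phi_\alpha^{-1}$ and using that $\partial_j v = (Y_j^\alpha u)\circ\phi_\alpha^{-1}$, the key point is that on $V_\alpha$ we have $\tilde\psi_\alpha \equiv 1$, so $Y_j^\alpha = X_j^\alpha$ on $V_\alpha$ and hence $(\partial_j v)|_{V_\alpha'} = (X_j^\alpha u)\circ\phi_\alpha^{-1}|_{V_\alpha'}$ (and similarly for $\partial_i\partial_j v$, up to lower-order Leibniz contributions from the cutoff which are controlled by $\|u\|_{\check H^{k+1}}$). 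The reverse inequality is a direct boundedness statement: each $X_j^\alpha$, having coefficients in $W^{\infty,\infty}$, maps $\check H^{k+2}\to\check H^{k+1}$ continuously.

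For part (3), the key algebraic step is Lemma~\ref{lemma.local.alpha}, which on $W_\alpha$ writes
$P^A u = \sum_{i,j=0}^{m} Y_i^{\alpha*}(a_{ij}^\alpha\circ\phi_\alpha)\,Y_j^\alpha u$.
Extracting the $(m,m)$ term and using $Y_m^{\alpha*}=-Y_m^\alpha$, Leibniz gives
\begin{equation*}
a_{mm}^\alpha (Y_m^\alpha)^2 u \seq -P^A u - (Y_m^\alpha a_{mm}^\alpha)\,Y_m^\alpha u + \sum_{(i,j)\neq(m,m)} Y_i^{\alpha*}(a_{ij}^\alpha)\,Y_j^\alpha u\,.
\end{equation*}
Restricting to $V_\alpha$ (so $Y=X$) and multiplying by $(a_{mm}^\alpha)^{-1}$ expresses $(X_m^\alpha)^2 u$ with the prefactor $\nnW{k}{a_{mm}^{-1}}$. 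Part (2) bounds $\|u\|_{\check H^{k+2}(U_0)}$ by $\sum_\alpha\sum_{i,j}\|X_i^\alpha X_j^\alpha u\|_{\check H^{k}(U_0)}$, so it remains to control each $(i,j)$:
\emph{(a)} for $j<m$, directly $\|X_i^\alpha X_j^\alpha u\|_{\check H^{k}} \lesssim \|X_j^\alpha u\|_{\check H^{k+1}}$, which is in the right-hand side;
\emph{(b)} for $i<m$, $j=m$, write $X_i^\alpha X_m^\alpha = X_m^\alpha X_i^\alpha + [X_i^\alpha,X_m^\alpha]$ and bound by $\|X_i^\alpha u\|_{\check H^{k+1}} + \|u\|_{\check H^{k+1}}$, both in the right-hand side (using $X_0^\alpha=id$);
\emph{(c)} for $(i,j)=(m,m)$, invoke the inverted PDE formula, estimating its right-hand side term by term via Leibniz and multiplication $\check W^{k+1,\infty}\cdot\check H^{k+1}\to\check H^k$, absorbing the lone $\|X_m^\alpha u\|_{\check H^k}$ appearing into $\|u\|_{\check H^{k+1}}=\|X_0^\alpha u\|_{\check H^{k+1}}$.

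The main obstacle will be the bookkeeping in step (c): the off-diagonal terms $Y_i^{\alpha*}(a_{ij}^\alpha)Y_j^\alpha u$ with $j=m$ still carry one $X_m^\alpha$, which must be swept into the tangential side either by a commutator $[X_i^\alpha,X_m^\alpha]$ or by integration-by-parts-type rewriting, at the cost of one extra derivative on the coefficient (hence the power $\check W^{k+1,\infty}$ rather than $\check W^{k,\infty}$ on $\|A\|$). I also expect a minor technical point in parts (1)--(2): to make the identification $X_j^\alpha = Y_j^\alpha$ clean, one may need to shrink the covering slightly so that $\tilde\psi_\alpha\equiv 1$ on an open neighborhood of $\overline{V}_\alpha$ (still keeping $\overline{U}_0\subset\bigcup_\alpha V_\alpha$), so that all derivatives of $\tilde\psi_\alpha$ entering via Leibniz vanish on $V_\alpha$; remainder terms involving $\partial\tilde\psi_\alpha$ are then trivially controlled by lower-order $\|u\|_{\check H^{k+1}}$ contributions.
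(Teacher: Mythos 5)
Your proposal is correct and follows essentially the same route as the paper: localization via the charts $\phi_\alpha$ and the identity $X_j^\alpha = Y_j^\alpha$ on $V_\alpha$ for (1)--(2), then isolating the $(m,m)$ term of the local expression $P^A = \sum_{i,j} Y_i^{\alpha*}(a_{ij}^\alpha\circ\phi_\alpha)Y_j^\alpha$ from Lemma~\ref{lemma.local.alpha}, multiplying by $(a_{mm}^\alpha)^{-1}$, commuting the (exactly commuting) vector fields $Y_i^\alpha$, and feeding the result into part (2). The only steps you leave implicit are harmless: the derivatives of $\tilde\psi_\alpha$ already vanish on the open set $V_\alpha$ (no shrinking of the cover is needed), and the tangential terms without the prefactor are absorbed using $1 \lesssim \nnW{k}{a_{mm}^{-1}}\,\|A\|_{\check W^{k+1,\infty}}$ from Lemma~\ref{lemma.product.ge2}, exactly as in the paper.
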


The point of (3) is that we are not using the vector fields $X_m^\alpha$.

\begin{proof}
The property~(1) is true because it is true (well known) in the Euclidean case and because we
have the norm equivalences of Remark~\ref{rem.def.norm.sob.var}. We include the details for
the convenience of the reader. Let $V_\alpha' := \phi_\alpha(V_\alpha)$,
where $\phi_\alpha$ are as in Notation~\ref{not.new} and Assumptions~\ref{assumpt.new}. 
We then have $\|X_j^\alpha u\|_{\check H^{k+1}} 
\lesssim \|u\|_{\check H^{k+2}}$, for all $\alpha \in I$ and
$j = 0, \ldots, m$, and hence $\|u\|' \lesssim \|u\|_{\check H^{k+2}}$. To prove
the other inequality, we proceed as follows:

\begin{align*}
    \| u \|_{\check H^{k+2}(U_0)} \, \lesssim \, \nH{k+2}{u} \, 
    & \ede \sum_{\alpha \in I } \| u \circ \phi_\alpha^{-1} \|_{\check H^{k+2}(V_\alpha')} \\
    &  \lesssim \ \sum_{\alpha \in I } \sum_{j=0}^m 
    \| \pa_j (u \circ \phi_\alpha^{-1}) \|_{\check H^{k+1}(V_\alpha')} \\
    &  \lesssim \ \sum_{\alpha \in I } \sum_{j=0}^m 
    \| (X_j^\alpha u) \circ \phi_\alpha^{-1}) \|_{\check H^{k+1}(V_\alpha')} \\
    &  \lesssim \ \sum_{\alpha \in I } \sum_{j=0}^m \nH{k+1}{X_j^\alpha u}  \lesssim \ \sum_{\alpha \in I } 
    \sum_{j=0}^m \| X_j^\alpha u \|_{\check H^{k+1}} \, =:\, \|u\|' \,,
\end{align*}
where in the second inequality we have used the result in the Euclidean case (in which case
it is simple, classical, and  well known, as we have mentioned already).

The proof of~(2) is completely similar, but ths time comparing with $$\sum_{\alpha \in I } \sum_{i, j=0}^m 
    \| \pa_i \pa_j (u \circ \phi_\alpha^{-1}) \|_{\check H^{k+1}(V_\alpha')}\,.$$

Let us now prove~(3). We may assume $A \in \check W^{k+1,\infty}(U_0)$ (otherwise the
result is trivial). For $u \in \CIc(V_\alpha)$, Lemma~\ref{lemma.local.alpha} gives
\begin{equation}
    Pu \seq P^Au \seq \sum_{i, j = 1}^{m} a_{ij}^\alpha X_i^\alpha X_j^\alpha + R^\alpha
\end{equation}
where $R^\alpha := \sum_{j=0}^m r_j^\alpha \pa_j u$ is a differential operator of order one,
with coefficients $r_{j}^\alpha \in \check W^{k,\infty}(U_0)$
and $\|r_{j}^\alpha \|_{\check W^{k,\infty}} \lesssim 
\|A\|_{\check W^{k+1,\infty}}$. Hence
\begin{equation}\label{Ru.var}
     \| R^\alpha u \|_{\check H^{k}} \lesssim  \|A\|_{\check W^{k+1,\infty}} 
     \|u \|_{\check H^{k+1}} \, .
\end{equation}
by Lemma~\ref{lemma.cont}. Moreover $a_{ij}^\alpha \in \check W^{k+1,\infty}(U_0)$
satisfy $\| a_{ij}^\alpha \|_{\check W^{k+1,\infty}} \lesssim \| A \|_{\check W^{k+1,\infty}}$. 

Let $Y_j^\alpha$ be as defined in Remark~\ref{rem.def.X.vect} and used in Lemma~\ref{lemma.local.alpha}.
Let us write $\widetilde{\sum}_{i+j < 2m}$ for the sum over all pairs $(i, j)$ such that
$i, j \ge 1$ and $i + j < 2m$. For $u \in \CIc(W_\alpha)$, Lemma~\ref{lemma.local.alpha} gives
\begin{equation}\label{eq.Ym}
   Y_m^\alpha Y_m^\alpha u \seq (a_{mm}^\alpha)^{-1} \big ( R^\alpha u 
   - \widetilde{\sum}_{i+j<2m} a_{ij}^\alpha Y_i^\alpha Y_j^\alpha u  - Pu \big)   \,.
\end{equation}
Since $X_j^\alpha := \tilde \psi_\alpha Y_j^\alpha$ with $\tilde \psi_\alpha \in 
\CIc(W_\alpha)$, the inequality
$1 \lesssim  \nnW{k}{a_{mm}^{-1}}   \|A\|_{\check W^{k+1,\infty}}$
of Lemma~\ref{lemma.product.ge2}, the inequality~\eqref{Ru.var}, Lemma~\ref{lemma.cont}, 
Equation~\eqref{eq.Ym}, and the relation $Y_i^\alpha Y_j^\alpha = Y_j^\alpha Y_i^\alpha$
%and the norm equivalence of part (1) 
imply that
\begin{align*}
   \| X_m^\alpha X_m^\alpha u \|_{\check H^{k}} \ & \lesssim \| 
   \tilde \psi_\alpha Y_m^\alpha Y_m^\alpha u \|_{\check H^{k}(W_\alpha)} + 
   \|u \|_{\check H^{k+1}(W_\alpha)}\\
   & \lesssim \ \nnW{k}{a_{mm}^{-1}}
  \| A \|_{\check W^{k+1,\infty}} \big (  \|u \|_{\check H^{k+1}}
   +  {\widetilde{\sum}_{i+j<2m}} \, \|\tilde \psi_\alpha Y_i^\alpha Y_j^\alpha u \|_{\check H^{k}} \big )
   + \| a_{mm}^{-1} Pu \|_{\check H^{k}} \\
   & \lesssim \ \nnW{k}{a_{mm}^{-1}}
  \| A \|_{\check W^{k+1,\infty}}  \big (  \|u \|_{\check H^{k+1}}
   +  \sum_{i=1}^{m-1} \|X_i^\alpha (u) \|_{\check H^{k+1}} \big ) 
   + \| a_{mm}^{-1} Pu \|_{\check H^{k}}\\
   & = \ \nnW{k}{a_{mm}^{-1}}
  \| A \|_{\check W^{k+1,\infty}}  \sum_{i=0}^{m-1} \|X_i^\alpha (u) \|_{\check H^{k+1}} 
   + \| a_{mm}^{-1} Pu \|_{\check H^{k}} \,. 
\end{align*}
The last inequality and part~(2) then give that
\begin{align*}
    \|u\|_{\check H^{k+2}} \ & \lesssim \ \sum_{\alpha \in I}\sum_{i,j=0}^m 
    \|X_i^\alpha X_j^\alpha u\|_{\check H^{k}}  \\
    & \lesssim \ \sum_{\alpha \in I} \big( \|X_m^\alpha X_m^\alpha u\|_{\check H^{k}} + 
    \widetilde{\sum}_{i + j \le 2m-1}
    \|X_i^\alpha X_j^\alpha u\|_{\check H^{k}} + \|u\|_{\check H^{k+1}} \big)\\
    & \lesssim \ \sum_{\alpha \in I} \big( \|X_m^\alpha X_m^\alpha u\|_{\check H^{k}} + \sum_{i = 1}^{m-1}
    \|X_i^\alpha u\|_{\check H^{k+1}} + \|u\|_{\check H^{k+1}} \big)\\
%%%     
   & \lesssim \sum_{\alpha \in I} \Big(  \ \nnW{k}{a_{mm}^{-1}}
   \| A \|_{\check W^{k+1,\infty}}  \sum_{i=0}^{m-1} \|X_i^\alpha (u) \|_{\check H^{k+1}} 
   + \| a_{mm}^{-1} Pu \|_{\check H^{k}} \Big) \,,    
\end{align*}
where, for the last equation, we used again the relation
$1 \lesssim  \nnW{k}{a_{mm}^{-1}}   \|A\|_{\check W^{k+1,\infty}}$
of Lemma~\ref{lemma.product.ge2}). A final application of Lemma~\ref{lemma.cont} gives
$\| a_{mm}^{-1} Pu \|_{\check H^{k}} \le \nnW{k}{a_{mm}^{-1}} \| Pu \|_{\check H^{k}}$, and
hence (3).
\end{proof}

\subsection{Groups of diffeomorphisms}
We recall that a tangent vector $X \in T_y \RR^m$ is a derivation $X : \CI(\RR^m) \to \CC$
such as $X(fg) = X(f) g(y) + f(y) X(g)$ and $X(\overline f) = \overline{X(f)}$
(since we are working with complex valued functions but consider only vector fields with
real coefficients). A tangent vector field to $\RR^m$
is a family of tangent vectors $X(y) \in T_y \RR^m$, for all $y \in \RR^m$,
which varies in a smooth way. Since $U_0 \subset \RR^m$, we have that $X = \sum_{j = 1}^m
a_{j} \partial_j$, with the functions $a_{j}$ smooth, real valued.

\begin{remark}\label{rem.champ.X}
Let $X$ be a compactly supported smooth vector field on $\RR^m$ which is
\emph{tangent} to~$\partial U_0$. Since $X$ is compactly supported and tangent
to the boundary of $U_0$,
there is a one-parameter subgroup of
diffeomorphisms $\phi_t: \overline{U}_0 \to \overline{U}_0$ that
integrates $X$, in the sense that the derivative of $\phi_t(y)$ at $t = 0$ is
$X(y)$:
\begin{equation*}
 \lim_{t \to 0} t^{-1}(f(\phi_t(y)) - f(y)) \seq X(y) f \,.
\end{equation*}
We also have the group property: $\phi_t \circ \phi_s = \phi_{t+s}$,
for all $t, s \in \RR$. If moreover $X$ is \emph{tangent} at all boundaries
$\partial U_k$, $k=0, 1, \ldots, N$, then $\phi_t(U_k) = U_k$ for all
$k = 0, 1, \ldots, N$.  
\end{remark}

\begin{proposition}\label{prop.S}  
Let $X$ be a compactly supported, smooth vector field on $\RR^m$ that is
tangent to all boundaries $\partial U_j$, $j=0, 1, \ldots, N$. We let
$\tau_t$ denote the group of isomorphisms generated by $X$, that is, 
$\tau_t (f) \ede f \circ \phi_{t}$, where $\phi_t$ is as in Remark~\ref{rem.champ.X}. 
Let $k \ge 0$ and $\maP_k^A$ and the $V_k$ and $V_k^-$ be as in Section
\ref{ssec.Pk}. Then

\begin{enumerate}
\item $\tau$ defines a strongly continuous group $S(t)$ of operators on $V_k$ such that
$V_{k+1} \subset \maD(L_S)$ and $L_S = X$ on $V_{k+1}$.

\item Similarly, $\tau$ defines a strongly continuous group $S_-(t)$ of operators
on $V_k^-$ such that $V_{k+1}^- \subset \maD(L_{S_-})$ and $L_{S_-} = X$ on $V_{k+1}^-$.

\item The maps $X : V_{k+1} \to V_{k}$ and 
$X : V_{k+1}^- \to V_{k}^-$ induced by $X$ are well-defined and continuous.

\item \label{item.prop.S.h1} $\tau$ also defines a group of
isomorphisms (still denoted $\tau$) of $\check{W}^{k, \infty}(U_0)$
by the relation $S(t) \maP_k^A = \maP_k^{\tau_t(A)} S_-(t)$ for $t \in \RR$ and
$A \in M_{m+1}(\check{W}^{k, \infty}(U_0))$. 

\item \label{item.prop.S.h2}
If $A \in M_{m+1}(\check{W}^{k+1, \infty}(U_0))$,
then there exists$X(A) \in M_{m+1}(\check{W}^{k, \infty}(U_0))$ (loss of one derivative!)
such that
$$t^{-1}( \maP_k^{\tau_t(A)} - \maP_k^A) \to \maP_k^{X(A)}
\, =: \, X( \maP_k^A) $$ strongly in~$\maL(V_{k}; V_{k}^-)$.
\end{enumerate}
\end{proposition}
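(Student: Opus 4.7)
The plan is to build the group $S(t)$ and $S_-(t)$ as pullbacks by the flow $\phi_t$, and then to define the action on coefficient matrices so that the intertwining relation with $\maP_k^A$ holds by change of variables. Since $X$ is compactly supported and tangent to every $\partial U_j$, Remark~\ref{rem.champ.X} gives $\phi_t(U_j) = U_j$ for all $j$, so $\phi_t$ restricts to diffeomorphisms of $U_0 \setminus \Gamma$, of $\Gamma$, of $\partial U_0$, of $\partial_D U_0$, and of $\partial_N U_0$. I set $S(t) u := u \circ \phi_t$ and, component-wise on $\check H^{k-1}(U_0) \oplus H^{k-1/2}(\partial_N U_0 \cup \Gamma)$, $S_-(t) F := F \circ \phi_t$; for $V_0^- = H_D^1(U_0)^*$, $S_-(t)$ is defined by duality from $S(-t)$ on $H_D^1(U_0)$.

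For (1)--(3): the standard change-of-variables estimate applied separately on each $U_j$ shows $S(t)$ is bounded on $\check H^{k+1}(U_0)$, and the conditions defining $V_k$ (vanishing on $\partial_D U_0$ and $[[\,\cdot\,]]_\Gamma = 0$) are preserved because $\phi_t$ fixes $\partial_D U_0$ setwise and maps each side $U_\pm$ of every component $\Gamma_k$ into itself. The group property $S(t+s) = S(t) S(s)$ follows from $\phi_{t+s} = \phi_t \circ \phi_s$. Strong continuity is obtained by combining a uniform bound $\sup_{|t|\le 1} \|S(t)\|_{\maL(V_k)} < \infty$ (from smooth dependence of $\phi_t$ on $t$) with convergence on a dense subset. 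The same arguments apply to $S_-(t)$. Part~(3) is immediate: locally $X = \sum a_j \partial_j$ with $a_j$ smooth and compactly supported, so Lemma~\ref{lemma.cont} gives continuity $V_{k+1} \to V_k$ and $V_{k+1}^- \to V_k^-$. For the generator, I would write, for $u \in V_{k+1}$,
\[
   t^{-1}\bigl(S(t) u - u\bigr) \seq t^{-1} \int_0^t S(s) (X u) \, ds \, ,
\]
using the fundamental theorem of calculus; by (3) $Xu \in V_k$ and by strong continuity of $S$ on $V_k$ the integral converges to $Xu$, whence $V_{k+1} \subset \maD(L_S)$ and $L_S u = Xu$; the analogous computation handles $L_{S_-}$.

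For (4): I would define $\tau_t(A) \in M_{m+1}(\check W^{k,\infty}(U_0))$ as the \emph{canonical} coefficient matrix, in the sense of Section~\ref{sssec.aalpha}, of the pushforward differential operator $\phi_{t*}(P^A) = \phi_{-t}^* P^A \phi_t^*$. Because $\phi_t$ preserves each $U_j$ and is smooth with smooth inverse, the chain-rule formula~\eqref{eq.def.Aalpha} (applied with $\phi_t$ in place of $\phi_\alpha$) yields $\tau_t(A) \in M_{m+1}(\check W^{k,\infty}(U_0))$, and the composition $\phi_{t+s} = \phi_t \circ \phi_s$ promotes $\tau$ to a one-parameter group of linear isomorphisms of $M_{m+1}(\check W^{k,\infty}(U_0))$. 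The intertwining $S(t) \maP_k^A = \maP_k^{\tau_t(A)} S_-(t)$ is then a direct change-of-variables computation in Definition~\ref{def.opsP.k}: tangency of $X$ to $\partial U_0$ and to $\Gamma$ ensures that $\phi_t$ transforms the conormal derivative $D_\nu^A$ and the jump $[[D_\nu^A \cdot]]_\Gamma$ into the analogous quantities for $\tau_t(A)$, the Jacobian factors being absorbed in the canonical coefficient transformation.

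For (5): differentiating the explicit formula for $\tau_t(A)_{ij}$ at $t=0$ via Taylor expansion produces a linear combination of $A$, its first derivatives along $X$, and derivatives of the flow; call the result $X(A) \in M_{m+1}(\check W^{k,\infty}(U_0))$, which explains the single loss of derivative. Then $t^{-1}(\tau_t(A) - A) \to X(A)$ in $\check W^{k,\infty}$ componentwise, and Lemma~\ref{lemma.cont} upgrades this to strong convergence of $t^{-1}(\maP_k^{\tau_t(A)} - \maP_k^A)$ to $\maP_k^{X(A)}$ in $\maL(V_k; V_k^-)$. The main obstacle in the whole proposition is part~(4): making the coefficient transformation $\tau_t(A)$ both a well-defined group action on matrices (rather than merely on differential operators, which do not determine coefficients uniquely, cf.\ Remark~\ref{rem.d0}) and simultaneously compatible with the boundary and transmission pieces of $\maP_k^A$. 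Once~(4) is in place, parts~(1)--(3) and~(5) follow from standard pullback estimates and Taylor's formula.
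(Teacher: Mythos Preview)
Your overall strategy is sound and close to the paper's, but there is a genuine gap in part~(5). You claim that $t^{-1}(\tau_t(A) - A) \to X(A)$ in $\check W^{k,\infty}$; this is false in general. Pullback by a flow is \emph{not} strongly continuous on $L^\infty$-type spaces (the paper remarks on exactly this just after the statement of the proposition), so the difference quotient of a $\check W^{k+1,\infty}$ coefficient need not converge to its derivative in the $\check W^{k,\infty}$ norm. A concrete obstruction already in one dimension: for $a(x)=|x|\in W^{1,\infty}(\RR)$ the quantity $t^{-1}(a(\cdot+t)-a)$ stays at $L^\infty$-distance $2$ from $a'=\operatorname{sgn}$ for every $t>0$. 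Hence you cannot feed norm convergence of coefficients into Lemma~\ref{lemma.cont} as written. The paper's fix is to prove the strong convergence in $\maL(V_k;V_k^-)$ \emph{directly}: the coefficient error $g_t := t^{-1}(\tau_t(a_{ij}) - a_{ij}) - X(a_{ij})$ is uniformly bounded in $\check W^{k,\infty}$ (this much does follow from $A\in\check W^{k+1,\infty}$) and tends to $0$ pointwise a.e.; combining uniform boundedness with convergence on the dense subspace of smooth compactly supported $w$ gives $g_t w\to 0$ in $\check H^k$ for every fixed $w\in\check H^k$, which is precisely strong operator convergence after taking $w=\partial_j u$.

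Apart from this, your argument parallels the paper's, with two differences worth noting. First, the paper localizes at the outset to the model cases $U_0=\RR^m_+$ or $U_0=\RR^m$ with $\Gamma=\{x_m=0\}$ and $X=\psi\,\partial_\ell$, $\ell<m$, citing earlier work for most of the computation; your global flow argument avoids this reduction and is arguably cleaner. Second, for the inclusion $V_1^-\subset\maD(L_{S_-})$ in the dual case $k=0$, the paper computes the generator by pairing against test functions and passing to the adjoint semigroup, whereas your identity $t^{-1}(S(t)u-u)=t^{-1}\int_0^t S(s)(Xu)\,ds$ handles the direct spaces uniformly. Just be aware that on $V_0^-=H_D^1(U_0)^*$ you must interpret $X:V_1^-\to V_0^-$ through the inclusion $I$ of Remark~\ref{rem.def.I} and check its continuity in the dual norm separately; that is what the paper's duality computation does, and your sentence ``Lemma~\ref{lemma.cont} gives continuity $V_{k+1}^-\to V_k^-$'' does not literally cover the case $k=0$.
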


The precise formula for $X(A)$ is $$X(A) \seq \sum_{i, j=0}^m \big ([X, \pa_i^*] a_{ij} \pa_j
+ \pa_i^* X(a_{ij}) \pa_j + \pa_i^* a_{ij} [X, \pa_j] \big) \,.$$ 
We also note that our semigroup is \emph{not} strongly continuous on $\check{W}^{k, \infty}(U_0)$.
See Section~\ref{ssec.Pk} for the notation not explained in the statement.

\begin{proof}
The assumption that $\pa_D U_0$, $\pa_N U_0$, and $\Gamma$ are all closed and 
disjoint and the fact that the problem is local show that we can assume that 
that $X = \psi \pa_\ell$, with $\ell < m$ and $\psi \in \CIc(\RR^m)$ and that
\begin{itemize}
  \item either  $N = 1$ (so no interface), $U_0 = \RR^m_+$, and $X = \psi \pa_\ell$ 
  \item or that $N =2$ (so two subdomains), $U_0 = \RR^m$, $U_1 = \RR_+^m$, and $U_2 = \RR_-^m$.
\end{itemize}
Except for the factor $\psi$, this is the case treated in~\cite{HassanSimonVictor1}. 
We thus repeat the proof there with the small modifications warranted by the introduction of
the factor $\psi$.
As in~\cite{HassanSimonVictor1}, it suffices to
show that we have a continuous inclusion $V_1^- \subset
D(L_S)$ and the points~\eqref{item.prop.S.h1} and~\eqref{item.prop.S.h2}, because the rest is an immediate consequence
of the definitions. 

To show the continuous inclusion $V_1^- \subset D(L_S)$,
we will use the inclusion $I: V_1^- \to V_0^-$ of Remark~\ref{rem.def.I}.
For all $v \in H^1(U_0)$ and
$(f, g, h) \in V_1^- := L^2(U_0) \oplus H^{1/2}(\pa_N U_0)
\oplus H^{1/2}(\Gamma)$, we have 
\begin{multline*}
   \langle t^{-1}(S(t) - 1) I(f, g, h), v \rangle 
   \seq \langle  I(f, g, h), t^{-1}(S(t)^* - 1) v \rangle\\
   \ede \scal{f}{t^{-1}(S^*(t) - 1) v} + 
   \scal{g \oplus h}{t^{-1}(S^*(t) - 1) v }_{\pa_N U_0 \cup \Gamma}\\
   \to  - \scal{f}{   \pa_\ell (\psi v)} - 
   \langle g,  \pa_\ell ( \psi v \vert_{\pa_N U_0}) \rangle_{\tilde \psi  \pa_N U_0} - 
   \langle h,    \pa_\ell (\psi v\vert_{\Gamma}) \rangle_{\Gamma} \\
   =:\,  \langle  \psi  \pa_\ell I(f, g, h), v \rangle    
   \seq  \langle\tilde \psi   \pa_\ell f, v \rangle_{U_0} + 
   \langle \tilde \psi  \pa_\ell g, v\rangle_{ \tilde \psi  \pa_N U_0} + 
   \langle\tilde \psi  \pa_\ell h, v\rangle_{\Gamma}\,,
\end{multline*}
where $\langle \cdot , \cdot \rangle_{\Sigma}$ is the duality between distributions and test-functions
on the set $\Sigma$ (which is an extension of the scalar product $\scal{\cdot}{\cdot}_\Sigma$,
according to our conventions). So 
$I(f, g, h)$ is in the domain of the infinitesimal generator $L_{S_-}$ of $S_-$,
$$L_{S_-} I(f, g, h) \seq  \tilde \psi \pa_\ell I(f, g, h) \seq X I(f, g, h)\,,$$ and the map
$X = \tilde \psi \pa_\ell : V_1^- \to V_0^ -$ is still well defined and continuous.

The point~\eqref{item.prop.S.h1} follows from Lemma~\ref{lemma.local.alpha}. Let us now
prove the point~\eqref{item.prop.S.h2}.
To simplify the notation, we will assume that $P = \pa_i( a_{ij} \pa_j)$,
$a_{ij} \in \check W^{k+1,\infty}(U_0)$ (just one non-zero coefficient). Let then
\begin{equation*}%\label{aij}
   g_t(x) \ede t^{-1} (a_{ij}(x+t \tilde \psi  e_\ell)-a_{ij}(x) ) - \tilde \psi  \partial_\ell (a_{ij}(x))\,. 
\end{equation*}
Then the $g_t \in W^{k,\infty}$ are uniformly bounded by
$2^k(1 + \| \tilde \psi \|_{W^{k+1,\infty}})  \| a_{ij} \|_{\check W^{k+1,\infty}}$
and therefore $\lim_{t \to 0} g_t w = 0$ in $\check H^{k}(U_0)$ for all
$w \in \check H^{k}(U_0)$. By taking $w = \partial_j u$
for all $j$, we obtain~\eqref{item.prop.S.h2}. 
\end{proof}

Now we state the corollary below which gives a version of Nirenberg's trick using vector fields.

\begin{corollary}\label{coro.esti.b.var} 
We use the notation of Proposition~\ref{prop.S}.
Suppose that $A \in M_{m+1}(\check{W}^{k+1,\infty}(U_0))$ and that
$ \maP_k = \maP_k^A : V_{k} \to V_{k}^-$ is bijective. Let $X(\maP_k^A) := \maP_k^{X(A)}$,
as in item~\ref{item.prop.S.h2} of that Proposition.
Then, for all $F \in V_{k+1}^-$ we have
\begin{equation*}
    X ( \maP_k^{-1} F ) \seq  \maP_k^{-1}(X F) - 
     \maP^{-1} X( \maP_k)  \maP_k^{-1} F \,.
\end{equation*}  
\end{corollary}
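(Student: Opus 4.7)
The plan is to read this corollary as a direct application of Nirenberg's trick (Lemma~\ref{astuceniren}), whose three hypotheses are packaged, item by item, by Proposition~\ref{prop.S}. No new computation is needed; the only real work is making the dictionary between the lemma's abstract data $(\maX,\maY,T,T_t,S_\maX,S_\maY)$ and the concrete transmission-problem data $(V_k,V_k^-,\maP_k^A,\ldots)$ precise.

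Concretely, I would set $\maX := V_k$ and $\maY := V_k^-$, take $T := \maP_k = \maP_k^A$ and $T_t := \maP_k^{\tau_t(A)}$, and use for $S_\maX = S$ and $S_\maY = S_-$ the strongly continuous groups provided by items~(1)--(2) of Proposition~\ref{prop.S}. Their infinitesimal generators $L_\maX$ and $L_\maY$ both act as (multiples of) $X$, and their domains contain $V_{k+1}$ and $V_{k+1}^-$ respectively, by the same items.

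Next I would check the three hypotheses of Lemma~\ref{astuceniren}. Hypothesis~(i), that $T=\maP_k$ is invertible, is assumed. Hypothesis~(ii), the intertwining relation $T_tS_\maX(t)=S_\maY(t)T$, is exactly the conjugation identity of item~\eqref{item.prop.S.h1} of Proposition~\ref{prop.S}, namely $\maP_k^{\tau_t(A)}S(t) = S_-(t)\maP_k^A$ (the statement there displays the two sides in reversed order, but this is the same equation). Hypothesis~(iii), that $t^{-1}(T_t-T)$ converges strongly in $\maL(V_k;V_k^-)$ to some $Q$, is precisely item~\eqref{item.prop.S.h2}, which moreover identifies the limit as $Q = \maP_k^{X(A)} =: X(\maP_k^A)$; this is where the assumption $A\in M_{m+1}(\check W^{k+1,\infty}(U_0))$ (one more derivative than needed for $\maP_k^A$ alone) is spent.

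With the hypotheses verified, the conclusion of Lemma~\ref{astuceniren} reads: for every $F\in\maD(L_\maY)$, we have $\maP_k^{-1}F\in\maD(L_\maX)$ and
\[
L_\maX(\maP_k^{-1}F) \seq \maP_k^{-1}(L_\maY F) - \maP_k^{-1}Q\,\maP_k^{-1}F.
\]
Since $V_{k+1}^-\subset \maD(L_\maY)$ and $L_\maX, L_\maY$ both coincide with $X$ on these subspaces, specializing to $F\in V_{k+1}^-$ and substituting $Q = X(\maP_k)$ yields
\[
X(\maP_k^{-1}F) \seq \maP_k^{-1}(XF) - \maP_k^{-1}\,X(\maP_k)\,\maP_k^{-1}F,
\]
which is the stated formula. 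There is no genuine obstacle here; the only point requiring mild care is to match the direction of the intertwining relation in Proposition~\ref{prop.S}\eqref{item.prop.S.h1} with the convention in Lemma~\ref{astuceniren}, so that $T$ and $T_t$ are both recognized as maps $V_k\to V_k^-$ and not inadvertently swapped.
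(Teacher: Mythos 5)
Your proposal is correct and follows essentially the same route as the paper: a direct application of Lemma~\ref{astuceniren} with $\maX=V_k$, $\maY=V_k^-$, $T=\maP_k^A$, the groups $S$, $S_-$ from Proposition~\ref{prop.S}, and $Q=X(\maP_k)=\maP_k^{X(A)}$ supplied by items~\eqref{item.prop.S.h1} and~\eqref{item.prop.S.h2}, then specializing to $F\in V_{k+1}^-\subset\maD(L_{S_-})$. Your remark about matching the direction of the intertwining relation is a sensible precaution and does not change the argument.
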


The corollary give then that 
\begin{equation}
     \Vert X (\maP_k^{-1} F) \Vert_{\check{H}^{k+1}} \seq 
     \Vert X (\maP_k^{-1} F) \Vert_{V_k} 
     \leq \|  \maP_k^{-1} X F  \Vert_{V_k} + 
     \|\maP^{-1} X ( \maP_k)  \maP_k^{-1} F \Vert_{V_k} \,.
\end{equation}

\begin{proof}
This is a direct and immediate consequence of Lemma~\ref{astuceniren} and
of Proposition~\ref{prop.S}. Indeed, let's take $\maX = V_{k}$, $\maY = V_{k}^-$,
$T := \maP_k = \maP_k^A$ and the groups of diffeomorphisms $S$ and $S_-$. It was assumed that
$T := \maP_k := \maP_k^A$ is bijective. Proposition
\ref{prop.S}~\eqref{item.prop.S.h1} and~\eqref{item.prop.S.h2}
shows that the other two hypotheses of Lemma
\ref{astuceniren} are satisfied with $Q:= X (\maP_k) = \maP_k^{X(A)}$. 
That lemma then gives that, for all $F$ in
$V_{k+1}^- \subset  D(L_{S_-})$, we have  $X (  \maP_k^{-1} F ) 
 =  \maP_k^{-1}(X f) -  \maP_k^{-1} X ( \maP_k)  \maP_k^{-1} F$, therefore
$\Vert X ( \maP_k^{-1} F) \Vert_{V_k} \leq \|  \maP_k^{-1} X F \Vert_{V_k} + 
\| \maP_k^{-1} X (  \maP_k)  \maP_k^{-1} F \Vert_{V_k} \,.$
\end{proof}

\section{Uniform estimates for families of operators}\label{sec4}
We now prove our main result, Theorem~\ref{thm.cons.transmission} and provide some applications. We consider
the setting already introduced. In particular, recall 
that all our domains $U_j$, $j =0, \ldots, N$, are bounded with smooth boundary and that $\pa_D U_0$,
$\pa_N U_0$ and the interface $\Gamma := \left (\cup_{j=1}^N \pa U_j \right ) \smallsetminus \pa U_0$
are compact, smooth, disjoint submanifolds of $\RR^m$. Also, recall that the 
sets $U_j$, $j = 1, \ldots, N$ are open and that $U_0 = \cup_{j=1}^N U_j \cup \Gamma$
is a disjoint union.

See Sections \ref{ssec.2.2},~\ref{ssec.2.3}, and~\ref{ssec.Pk} for notation and assumptions.

\subsection{Estimates for the norm of $(\maP_k^A)^{-1}$}
Theorem~\ref{thm.cons.transmission} is essentially equivalent to the following theorem,
for whose proof we shall repeatedly use Lemma~\ref{lemma.product.ge2}, usually without further comment.
Also, recall that we write $\|\xi \|_E = \infty$ if $\xi \notin E$, where $\|\cdot\|_E$ is the 
norm on $E$. 

In the following theorem, we are assuming, as usual, that $U_j$ are as in \ref{assum.decomposition}.
In that theorem, we are also using the resulting spaces 
$\check W^{k,\infty}(U_0)$ and $\check H^{k}(U_0)$ introduced in Definition 
\ref{def.broken.SS}, the spaces $V_n$ and $V_n^-$ introduced in Definition \ref{def.broken},
and the operators $\maP_k^A$ introduced in Definition \ref{def.opsP.k}.

\begin{theorem}\label{thm.transmission}
Let $k, n \ge 0$, let $A \in {M_{m+1}(\check W^{n+k+1,\infty}(U_0))}$, and suppose that
the induced operator $\maP_n = \maP_n^A : V_n \to V_n^{-}$ is invertible, where the 
notation was recalled in the paragraph above.
Given $F \in V_{n+k+1}^{-}$ we have
\begin{align*}
     \| \maP_n^{-1} F \| _{V_{n+k+1}} \lesssim \sum_{q=0}^{k+1} \, 
      |||  \maP_n^{-1} |||_{n}^{q+1} \,
      \nnW{n +k}{a_{mm}^{-1}}^{(q+1)k+1} 
      \|A\|_{\check W^{n+k+1,\infty}}^{(q+1)(k+1)} 
      \| F \|_{V_{n+k+1-q}^{-} } \,. \quad (\mathcal{J}_k)
\end{align*} 
\end {theorem}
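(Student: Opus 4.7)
The plan is to prove this by induction on $k$, with $n$ fixed, using Lemma~\ref{lemma.sobolev1et2}(3) as the main structural estimate and Corollary~\ref{coro.esti.b.var} (the vector-field version of Nirenberg's trick) to convert tangential derivatives of $u := \maP_n^{-1}F$ into quantities that involve $\maP_n^{-1}$ applied to less regular data. Since we are chaining the regularity gain two derivatives at a time (via Lemma~\ref{lemma.sobolev1et2}(3)), but one Nirenberg step only buys one derivative, the induction reduces the passage from $V_{n+k}$ to $V_{n+k+1}$ to an invocation of $(\mathcal J_{k-1})$ on an auxiliary datum. Throughout we use the background inequality $1\lesssim\||\maP_n^{-1}|\|_{n}\,\nnW{\bullet}{A}$ of Lemma~\ref{lemma.product.ge2} to absorb ``small'' factors when compressing the estimate into the stated form.

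For the base case $k=0$, I apply Lemma~\ref{lemma.sobolev1et2}(3) with its parameter equal to $n$ to $u=\maP_n^{-1}F$. The $\|Pu\|_{\check H^n}$-term is bounded directly by $\|F\|_{V_{n+1}^-}$, since $Pu$ is the first component of $\maP_n u = F \in V_{n+1}^- = \check H^{n}(U_0)\oplus H^{n+1/2}(\pa_N U_0\cup\Gamma)$. For the tangential terms $\|X_j^\alpha u\|_{\check H^{n+1}}$ with $j<m$ (which equals $\|X_j^\alpha u\|_{V_n}$ because the $X_j^\alpha$ are tangent to $\Gamma$ and to $\pa U_0$, so they preserve the Dirichlet and interface conditions), Corollary~\ref{coro.esti.b.var} gives
\begin{equation*}
 X_j^\alpha u \seq \maP_n^{-1}(X_j^\alpha F) - \maP_n^{-1}\,X_j^\alpha(\maP_n)\,\maP_n^{-1}F,
\end{equation*}
whose $V_n$-norm is bounded by $\||\maP_n^{-1}|\|_n\|F\|_{V_{n+1}^-}+\||\maP_n^{-1}|\|_n^2\|A\|_{\check W^{n+1,\infty}}\|F\|_{V_n^-}$, using that $X_j^\alpha$ loses one derivative on the scale $V_\bullet^-$ and that $\|X_j^\alpha(\maP_n)\|_n\lesssim \|A\|_{\check W^{n+1,\infty}}$. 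Multiplying by $\nnW{n}{a_{mm}^{-1}}\|A\|_{\check W^{n+1,\infty}}$ produces exactly the $q=0$ and $q=1$ terms of $(\mathcal J_0)$.

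For the induction step, assume $(\mathcal J_{k-1})$ and apply Lemma~\ref{lemma.sobolev1et2}(3) with parameter $n+k$. Again $\|Pu\|_{\check H^{n+k}}\le \|F\|_{V_{n+k+1}^-}$, providing the $q=0$ contribution. For each tangential term $\|X_j^\alpha u\|_{\check H^{n+k+1}} = \|X_j^\alpha u\|_{V_{n+k}}$, I use Nirenberg's formula to write $X_j^\alpha u = \maP_n^{-1}G_j$ with $G_j := X_j^\alpha F - X_j^\alpha(\maP_n)u$, and then apply the inductive hypothesis $(\mathcal J_{k-1})$ to $G_j$. The two pieces of $G_j$ are estimated separately: $\|X_j^\alpha F\|_{V_{n+k-q}^-}\lesssim \|F\|_{V_{n+k+1-q}^-}$, while $\|X_j^\alpha(\maP_n)u\|_{V_{n+k-q}^-}\lesssim \|A\|_{\check W^{n+k+1-q,\infty}} \|u\|_{V_{n+k-q}}$, and the latter $\|u\|_{V_{n+k-q}}$ is in turn bounded by a lower instance of $(\mathcal J_{k-1-q})$ (or directly by $(\mathcal J_{k-1})$ applied to the same $F$ with a weaker $V^-$-norm). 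Summing over $j$ and $q$, and multiplying by the prefactor $\nnW{n+k}{a_{mm}^{-1}}\|A\|_{\check W^{n+k+1,\infty}}$ coming from Lemma~\ref{lemma.sobolev1et2}(3), yields an estimate of the form required by $(\mathcal J_k)$.

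The main obstacle is bookkeeping the exponents. The structural identity governing them is: the passage from $(\mathcal J_{k-1})$ to $(\mathcal J_k)$ contributes one extra factor of $\||\maP_n^{-1}|\|_n$ (coming from the Nirenberg step that writes $X_j^\alpha u = \maP_n^{-1}(\cdots)$ before invoking the inductive bound on $\maP_n^{-1}$), one extra factor of $\nnW{n+k}{a_{mm}^{-1}}$ per $q$ (coming from the prefactor in Lemma~\ref{lemma.sobolev1et2}(3)), and one extra factor of $\|A\|_{\check W^{n+k+1,\infty}}$ per $q$ (coming from estimating $X_j^\alpha(\maP_n)$ and from the same prefactor). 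Verifying that these three per-step increments combine to match $(q+1)(k+1)$ and $(q+1)k+1$ exactly, and that the raising of the regularity index of $a_{mm}^{-1}$ and $A$ from $n+k-1$ to $n+k$ (respectively $n+k$ to $n+k+1$) is absorbed harmlessly (since $W^{n+k,\infty}\supset W^{n+k+1,\infty}$), is the delicate point; once that is done, taking the maximum over $q$ re-indexed by the shift in $(\mathcal J_{k-1})$ produces the full sum in $(\mathcal J_k)$.
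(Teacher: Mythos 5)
Your proposal is correct and follows essentially the same route as the paper's proof: induction on $k$ (the paper starts from the trivial case $(\mathcal{J}_{-1})$, which is just the definition of $|||\maP_n^{-1}|||_n$, rather than from $k=0$), combining Lemma~\ref{lemma.sobolev1et2}(3) with the Nirenberg-type identity of Corollary~\ref{coro.esti.b.var}, lower instances of the induction hypothesis for both $X_\ell^\alpha F$ and $X_\ell^\alpha(\maP_n)\maP_n^{-1}F$, and absorption of exponent deficits via $1\lesssim |||\maP_n^{-1}|||_n\,\nnW{\bullet}{A}$ and $1\lesssim \nnW{\bullet}{a_{mm}^{-1}}\,\|A\|_{\check W^{\bullet,\infty}}$ from Lemma~\ref{lemma.product.ge2}. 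The exponent bookkeeping you defer is exactly what the paper carries out (setting $t=q+s+1$ and $c(q,s)=sq+2q+s\ge 0$, then raising exponents to the common value with Lemma~\ref{lemma.product.ge2}), and it does close as you assert.
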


Typically, we shall have $n = 0$, in which case,
we recall that $||| \maP_0^{-1} |||_{0}$ is the norm of the operator
$ \maP_0^{-1} = ( \maP_0^A)^{-1} : V_0^{-} := H_D^{1}(\RR^m)^* \to V_0 := 
H_D^1(U_0)$.

\begin{proof}
The proof is by induction on $k$ as in~\cite{Mirela1, theseHassan, HassanSimonVictor1}. 
The proof is the same for all $n$, so we shall assume $n = 0$, for simplicity.
\smallskip

\emph{Step 0: initial verification.}\
The estimate $(\mathcal{J}_{-1})$ is true by the definition of the norm
$ |||  \maP_0^{-1} |||_0 := |||  \maP_0^{-1} |||_{\maL(V_0^-,V_0)}$. Indeed, the relation
$(\mathcal{J}_{-1})$ simply reads
\begin{align*}
\|  \maP_0^{-1} F \|_{V_0} \le  |||  \maP_0^{-1} |||_0 \,  \|F \|_{V_0^{-}} \,.
\end{align*}

\smallskip
Let us proceed now to the \emph{induction property,} that is, 
prove the relation $(\mathcal{J}_{k+1})$ assuming the relations $(\mathcal{J}_s)$ for $s= -1, 0, 1, 2, \dots, k$.
The proof of the induction property will consist of several steps.
\smallskip

\emph{Step 1: Estimate for $\|\maP_0^{-1} F\|_{V_{k+1}}$.}
The induction hypothesis for $s = k$, i.e. for $F \in V_{k+1}^- \subset V_{k+1}^-$, 
says that the relation $(\mathcal{J}_k)$ is true for $F$, so
\begin{align} %\label{eqI.btrans}
       \|  \maP_0^{-1} F\|_{V_{k+1}} 
       & \lesssim \sum_{q=0}^{k+1} \, 
      |||  \maP_0^{-1} |||_{0}^{q+1} \,
      % |a_{mm}^{-1}|_{\check W^{k,\infty}}
      \nnW{k}{a_{mm}^{-1}}
      ^{(q+1)k+1} \|A\|_{\check W^{k+1,\infty}}^{(q+1)(k+1)} 
      \| F \|_{V_{k+1-q}^{-}} \notag \\
      & \seq \sum_{q=0}^{k+1} \, |||  \maP_0^{-1} |||_{0}^{q+1} \,
%      |a_{mm}^{-1}|_{\maV, k }
%      
      \nnW{k}{a_{mm}^{-1}}^{(q+1)(k+1) - q} \|A\|_{\check W^{k+1,\infty}}^{(q+1)(k+2) -1 - q} 
      \| F \|_{V_{k+1-q}^{-}} 
\notag \\
\label{eq.zero} 
      & \lesssim \sum_{q=0}^{k+1} \, 
      |||  \maP_0^{-1} |||_{0}^{q+1} \,
%      |a_{mm}^{-1}|_{\maV, k }^{(q+1)(k+1)} 
      \nnW{k}{a_{mm}^{-1}}
      \|A\|_{\check W^{k+1,\infty}}^{(q+1)(k+2) -1} 
      \| F \|_{V_{k+1-q}^{-}}      
      \, ,
\end{align}
where, for the last step, we also used Lemma~\ref{lemma.product.ge2}.
\smallskip

\emph{Step 2: Estimate for $\| X_\ell^\alpha ( \maP_0^{-1} F) \| _{V_{k+1}},$ 
$\ell < m$ for $F \in V_{k+2}^-$.}\ First, since $F \in V_{k+2}^-$ and $\ell < m$,
Proposition~\ref{prop.S}(3) gives us that, for all $\alpha$, we have
$X_\ell^\alpha F \in V_{k+1}^{-}$. So Corollary~\ref{coro.esti.b.var} 
for $ \maP_k =  \maP_{k+1}^A : V_{k+1} \to V_{k+1}^{-}$ and for the group of 
diffeomorphisms of generator $X_\ell^\alpha$ gives
\begin{align}\label{eqinitial.btrans.bis}
    \| X_\ell^\alpha ( \maP_0^{-1} F) \| _{V_{k+1}} \leq   
    \| \maP_0^{-1} (X_\ell^\alpha F)\|_{V_{k+1}} 
    + \|Q F \|_{V_{k+1}} \,,
\end{align}
with $Q :=  \maP_0^{-1} X_\ell^\alpha( \maP_0)  \maP_0^{-1} :V_{k+1}^{-} \rightarrow V_{k+1}$. 
We will now estimate the last two terms in the last equation.

\smallskip
First, Equation~\eqref{eq.zero} for $F$ replaced by $X_\ell^\alpha F
\in V_{k+1}^-$ gives:
\begin{align}\label{eqI.btrans.bis}
       \|  \maP_0^{-1} (X_\ell^\alpha F )\|_{V_{k+1}} 
       &  \lesssim
  \sum_{q=0}^{k+1} \, 
      |||  \maP_0^{-1} |||_{0}^{q+1} \,
%      |a_{mm}^{-1}|_{\check W^{k,\infty} }
%      
      \nnW{k}{a_{mm}^{-1}}^{(q+1)(k+1)} \|A\|_{\check W^{k+1,\infty}}^{(q+1)(k+2) -1} 
      \| F \|_{V_{k+2-q}^{-}}      
      \, .
\end{align}

On the other hand, for $G := X_\ell^\alpha ( \maP_0) \maP_0^{-1} F \in V_{k+1}^{-} $,
the recurrence hypothesis $(\mathcal{J}_k)$ gives us, again, that
\begin{align}\label{eqg1.btrans.bis}
   \| Q F \|_{V_{k+1}} \ede \|  \maP_0^{-1}(G) \|_{V_{k+1}}  \lesssim \sum_{q=0}^{k+1} \, 
      |||  \maP_0^{-1} |||_{0}^{q+1} \,
%      |a_{mm}^{-1}|_{\check W^ k }
%      
      \nnW{k}{a_{mm}^{-1}}^{(q+1)k+1} \|A\|_{\check W^{k+1,\infty}}^{(q+1)(k+1)} 
      \| G \|_{V_{k+1-q}^{-}} \,.
\end{align}
In addition for $q$ fixed in $\{0, 1, \ldots , k+1\}$ we have, still by the hypothesis of
induction, that
\begin{align}\label{eqg2.btrans.bis}
    & \|G\|_{V_{k+1-q}^{-}}  \ede \| X_\ell^\alpha ( \maP_0)  \maP_0^{-1} F \|_{V_{k+1-q}^{-}} \leq 
    \|X_\ell^\alpha(A) )\|_{\check W^{k+1-q, \infty}} \| \maP_0^{-1} (F)\|_{V_{k+1-q}} \notag \\
    & \lesssim 
     \sum_{s=0}^{k-q+1} \,  |||  \maP_0^{-1} |||_{0}^{s+1} \,   
%     |a_{mm}^{-1}|_{\check W^{k-q}}
%     
     \nnW{k-q}{a_{mm}^{-1}}^{(s+1)(k-q)+1} 
    \|A\|_{\check W^{k+2-q,\infty }}^{(s+1)(k-q+1)+1} \|F\|_{V_{k+1-q-s}^{-}} \,.
\end{align}
Let $t := q+ s + 1$ and $c(q,s) := sq + 2q + s \ge 0$
(because $s, q \ge 0$). Subsequently, using
the equations~\eqref{eqg1.btrans.bis} and~\eqref{eqg2.btrans.bis} 
as well as Lemma~\ref{lemma.product.ge2}, we have:
\begin{align}
    \| Q F\|_{V_{k+1}}  & \lesssim
    \sum_{q=0}^{k+1} \sum_{s=0}^{k+1-q} \, |||  \maP_0^{-1} |||_{0}^{t+1} \,  
%    |a_{mm}^{-1}|_{\check W^ k }
%    
    \nnW{k}{a_{mm}^{-1}}^{(t+1)(k+1) - c(q,s)} 
    \|A\|_{\check W^{k+2,\infty }}^{(t+1)(k+2) - c(q,s) - 1}  \|F\|_{V_{k+1-q-s}^{-}}
    \notag    
    \\
 & 
\label{eqII.btrans.bis} 
 \lesssim
     \sum_{t=1}^{k+2} \ \, |||  \maP_0^{-1} |||_{0}^{t+1} \,  
%    |a_{mm}^{-1}|_{\check W^ k }
%    
    \nnW{k}{a_{mm}^{-1}}^{(t+1)(k+1)} 
    \|A\|_{\check W^{k+2,\infty }}^{(t+1)(k+2)- 1}  \|F\|_{V_{k+2-t}^{-}}       
     \, .  
\end{align}

Finally, by replacing Equations~\eqref{eqI.btrans.bis} and~\eqref{eqII.btrans.bis} in
\eqref{eqinitial.btrans.bis}, for $\ell < m$, we find that:
\begin{align}\label{eq.one}
    \| X_\ell^\alpha ( \maP_0^{-1} F) \| _{V_{k+1}}  & \lesssim \sum_{q=0}^{k+2} \, 
    |||  \maP_0^{-1} |||_{0}^{q+1} \, 
%    |a_{mm}^{-1}|_{\check W^ k }
%    
    \nnW{k}{a_{mm}^{-1}}^{(q+1)(k+1)} 
    \|A\|_{\check W^{k+2, \infty}}^{(q+1)(k+2)-1}  \| F \|_{V_{k+2-q}^{-}}   \,.  
\end{align}                            

\emph{Step 3: Induction property estimate.}\                          
We now finally turn to the proof of the induction property.
So, let us assume that relation $(\mathcal{J}_s)$ is true for
$-1 \le s \le k$ and prove $(\mathcal{J}_{k+1})$ if 
$F \in V_{k+2}^{-}$ and $A \in M_{m+1}(\check W^{k+2,\infty}(U_0))$.

First, it follows from Lemma~\ref{lemma.product.ge2} that:
\begin{align}\label{eq.two}
% | a_{mm}^{-1} |_{\maV, k+1} 
\nnW{k+1}{a_{mm}^{-1}} 
 \| f \|_{V_{k+2}^{-}} \le    
 |||  \maP_0^{-1} |||_{0} \,  
% | a_{mm}^{-1} |_{\maV, k+1} 
\nnW{k+1}{a_{mm}^{-1}}\, 
% |A|_{\maV, k+2}^{k+1} 
%\nW{k+2}{A}
\|A\|_{\check W^{k+2,}}^{k+1}
 \| F \|_{V_{k+2}^{-}} \,.
\end{align}
Let $F = (f, g, h)$ and $u :=  \maP_0^{-1}F$. Thus $Pu = f$. Next,
Lemma~\ref{lemma.sobolev1et2} (for $u = \maP_0^{-1}F$ and $k$ replaced
by $k+1$) and Equations~\eqref{eq.two},~\eqref{eq.zero},
and~\eqref{eq.one} imply (recall that for all $\alpha$, we set
$X_0^\alpha = id$):
\begin{align*}
   \| \maP_0^{-1}F\|_{\check H^{k+3}} \ & \lesssim 
%   | a_{mm}^{-1} |_{\maV, k+1} 
%   
   \nnW{k+1}{a_{mm}^{-1}}\Big ( \|f \|_{\check H^{k+1}} +
   %\nnW{k+2}{A} 
   \|A\|_{\check W^{k+2,\infty}} \sum_{\alpha \in I}\sum_{\ell=0}^{m-1} \|X_\ell^\alpha (u) \|_{\check H^{k+2}} \Big)
   \\
   & \lesssim \ |||  \maP_0^{-1} |||_{0} \,  
%   | a_{mm}^{-1} |_{\maV, k+1}
   \nnW{k+1}{a_{mm}^{-1}}
   ^{k+1} \, 
   %\nnW{k+2}{A}
   \|A\|_{\check W^{k+2,\infty}}^{k+1} \| F \|_{V_{k+2}^{-}}  \\
  & + \ \sum_{q=0}^{k+1} \, 
      |||  \maP_0^{-1} |||_{0}^{q+1} \,
%      |a_{mm}^{-1}|_{\maV,  k+1}
%      
      \nnW{k+1}{a_{mm}^{-1}}^{(q+1)(k+1) +1} \|A\|_{\check W^{k+1,\infty}}^{(q+1)(k+2)} 
      \| F \|_{V_{k+1-q}^{-}} \\
  & + \
  \sum_{q=0}^{k+2} \, 
    |||  \maP_0^{-1} |||_{0}^{q+1} \, 
%    |a_{mm}^{-1}|_{\maV, k+1}
%    
    \nnW{k+1}{a_{mm}^{-1}}^{(q+1)(k+1) +1} 
    %\nnW{k+1}{A}
    \|A\|_{\check W^{k+2,\infty}}^{(q+1)(k+2)}  \| F \|_{V_{k+2-q}^{-}} \\  
  & \lesssim \ \sum_{q=0}^{k+2} \, 
      |||  \maP_0^{-1} |||_{0}^{q+1} \,
%      |a_{mm}^{-1}|_{\maV, k+1}
%      
      \nnW{k+1}{a_{mm}^{-1}}^{(q+1)(k+1)+1} 
      %| A |_{\maV, k+2}
      \|A\|_{\check W^{k+2, \infty}}
      ^{(q+1)(k+2)} 
      \| F \|_{V_{k+2-q}^{-} } \,.
\end{align*}
The induction step is thus verified. This completes the proof of the relation
$(\mathcal{J}_{k+1})$. 
\end{proof}

\begin{remark} \label{rem.invert}
   The invertibility of $\maP_0 = \maP_0^A$ can be obtained using positivity
   (more precisely, the strong ellipticity) of suitable operators. This will be
   used in the next section. A generalization of the strong ellipticity is 
   ``$T$-coercivity,'' see~\cite{Ciarlet12, Ciarlet14, Karim1, Karim2,  Renata, Ciarlet13},
   which also may yield the invertibility of $\maP_0$. In these cases, we have
   $n = 0$ in our theorem. Yet another method of proving the invertibility of 
   $\maP_n$, for $n = 1$, is to use the self-adjointness of our operator (when this 
   the case), as in the last section of this paper, Section~\ref{sec.self-adjoint}.
\end{remark}

The main result stated in the introduction, Theorem~\ref{theorem.mainI},
follows right away from Theorem~\ref{thm.transmission}, as explained next:

\begin{proof}[Proof of Theorem~\ref{theorem.mainI}] 
   Since all boundaries $\pa U_j$ are smooth (so also the interface $\Gamma$ is smooth)
   and since $U_0$ is bounded, we obtain that there exists an extension constant $C_{U_0}$
   such that, for all $\tilde g \in H^{k+3/2}(\pa_D U_0)$ and all $\tilde h \in H^{k+3/2}(\Gamma)$,
   there exists $u_0 \in \check H^{k+2}(U_0)$ satisfying 
   $u_0\vert_{\pa_D U_0} = \tilde g$, $[[u_0]]_{\Gamma} = \tilde h$,
   and $$\|u_0\|_{\check H^{k+2}} \le C_{U_0} (\|\tilde g\|_{H^{k+1/2}(\pa_D U_0)} + 
   \|\tilde h\|_{H^{k+3/2}(\Gamma)})\,.$$
   Our last theorem (Theorem~\ref{thm.transmission}) applied to $F := (f - P^Au_0, g - D_\nu^A u_0, h - [[D_\nu^A u_0]])$
   then yields immediately Theorem~\ref{theorem.mainI}.
\end{proof}

\subsection{Automatic invertibility}
This subsection is devoted to some consequences of the following corollary 
that is based on Theorem~\ref{thm.transmission}, but does not use its full force.
We use the notation recalled before the statement of Theorem \ref{thm.transmission}.

\begin{corollary}\label{cor.thm.transmission}
   Let $k, n \ge 0$, let $A \in {M_{m+1}(\check W^{n+k,\infty}(U_0))}$, and suppose that
   the operator $\maP_n^A : V_n \to V_n^{-}$ is invertible.
   Then the operator $\maP_{n+k}^A : V_{n+k} \to V_{n+k}^-$  induced by
   restriction is also invertible. 
\end {corollary}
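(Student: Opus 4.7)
The plan is to use Theorem~\ref{thm.transmission} as a regularity statement to upgrade a preimage under $\maP_n^A$ to a preimage under $\maP_{n+k}^A$. If $k = 0$ there is nothing to prove, so assume $k \ge 1$.

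First I would record the compatibility of the operators $\maP_n^A$ and $\maP_{n+k}^A$: by the very definition of $\maP_j^A$ (Definition~\ref{def.opsP.k}) and by Lemma~\ref{lemma.int.partie} together with Remark~\ref{rem.relation0}, the diagram
\begin{equation*}
\begin{CD}
   V_{n+k} @>{\maP_{n+k}^A}>> V_{n+k}^- \\
   @VV{\subset}V @VV{\subset}V \\
   V_n @>{\maP_n^A}>> V_n^-
\end{CD}
\end{equation*}
commutes (the vertical arrow on the right being the natural continuous inclusion built from the $I$ of Remark~\ref{rem.def.I} for the bottom step $n = 0$, and from plain inclusions otherwise).

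Injectivity of $\maP_{n+k}^A$ is then immediate: if $u \in V_{n+k}$ and $\maP_{n+k}^A u = 0$, the commuting diagram gives $\maP_n^A u = 0$ in $V_n^-$, whence $u = 0$ by invertibility of $\maP_n^A$.

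For surjectivity, take $F \in V_{n+k}^- \subset V_n^-$ and set $u := (\maP_n^A)^{-1} F \in V_n$. I need to show $u \in V_{n+k}$; once this is established, the commuting diagram yields $\maP_{n+k}^A u = F$. This is exactly a regularity assertion, and it is provided by Theorem~\ref{thm.transmission} applied with the pair $(n, k-1)$ in place of $(n,k)$: the hypothesis $A \in M_{m+1}(\check W^{n+k,\infty}(U_0))$ is precisely the regularity required there, $\maP_n^A$ is assumed invertible, and $F \in V_{n+k}^- = V_{n+(k-1)+1}^-$, so the theorem produces the quantitative bound
\begin{equation*}
   \|u\|_{V_{n+k}} \seq \|(\maP_n^A)^{-1} F\|_{V_{n+k}} \lesssim \|F\|_{V_{n+k}^-}\,,
\end{equation*}
with the implicit constant depending on $\|A\|_{\check W^{n+k,\infty}}$, on $\nnW{n+k-1}{a_{mm}^{-1}}$, and on $|||(\maP_n^A)^{-1}|||_n$, but independent of $F$. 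In particular $u \in V_{n+k}$, and $\maP_{n+k}^A$ is onto.

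Thus $\maP_{n+k}^A : V_{n+k} \to V_{n+k}^-$ is a continuous bijection between Banach spaces; the bounded inverse theorem then delivers continuity of the inverse. The main (and in fact only non-formal) obstacle is the regularity step, but it is handled entirely by Theorem~\ref{thm.transmission}; the rest is bookkeeping about the compatibility of the scale $\maP_n^A, \maP_{n+1}^A, \dots$
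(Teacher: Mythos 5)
Your proposal is correct and follows essentially the same route as the paper: injectivity of $\maP_{n+k}^A$ follows because it is a restriction of the injective map $\maP_n^A$, while surjectivity follows by reading Theorem~\ref{thm.transmission} (applied with $k-1$ in place of $k$, using the paper's convention $\|\xi\|_E = \infty$ if $\xi\notin E$) as a regularity statement showing $(\maP_n^A)^{-1}F \in V_{n+k}$ whenever $F \in V_{n+k}^-$. You spell out a few steps the paper leaves implicit (the commuting square built from Remark~\ref{rem.relation0}, the trivial $k=0$ case, and the bounded inverse theorem), but the argument is the same.
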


\begin{proof} We replace $k+1$ with $k$ in Theorem~\ref{thm.transmission}, in what
   follows, for convenience. We have that $\maP_{n+k}^A : V_{n+k} \to V_{n+k}^-$ is well defined 
   and continuous by Lemma~\ref{lemma.cont} since $A \in {M_{m+1}(\check W^{n+k,\infty}(U_0))}$.
   The hypothesis that $\maP_n$ is injective implies that its restriction 
   $\maP_{n+k}^A$ is also injective. Finally, Theorem~\ref{thm.transmission} implies that it is also
   surjective, and hence an isomorphism, as claimed.
\end{proof}

Our results also imply a regularity result.

\begin{remark}\label{rem.regularity}
   Under the assumptions of Theorem~\ref{thm.transmission} (especially $F \in V_{n+k}^{-}$ and
   $A$ with coefficients in $\check W^{n+k,\infty}(U_0)$), we obtain, in particular, that $\maP_n^{-1} F \in V_{n+k}$.
   This is a \emph{regularity result.} In the strongly elliptic case, the oldest statement we know of such a
   regularity result is in the work of Roitberg and Sheftel~\cite{RoitbergSh62, RoitbergSh63}. 
   See also~\cite{LiNistorQiao}. See~\cite{HMN, NicaiseBook, NicaiseSandig1, NicaiseSandig2} for 
   the case of polygons, where, we stress, the results may be very different.
\end{remark}

If $E$ and $F$ are two normed spaces, recall that $\maL(E; F)$ denotes the set of continuous,
linear maps $E \to F$. Moreover, we shall write $\maL(E; F)^{-1}$ for the set of 
continuous, \emph{invertible} linear maps $E \to F$. 
We shall need also the well known fact that the map $\maL(E; F)^{-1} \ni T \to T^{-1} \in \maL(F; E)$ 
is analytic (hence continuous, hence measurable). This gives the following consequences.

\begin{corollary}\label{cor.mca}
   Let $n, k \in \ZZ_+$ and $A : \Theta \to M_{(m+1)}(\check W^{n+k, \infty}(U_0))$. 
   We assume that, for all $\theta \in \Theta$, $\maP_n^{A(\theta)} : V_n \to V_n^{-}$ is 
   invertible, and hence that the function $$\big (\maP_{n+k}^{A} \big)^{-1} : \Theta  \to \maL(V_{n+k}^-;V_{n+k})\,,$$
   $(\maP_{n+k}^{A})^{-1}(\theta) := \big (\maP_{n+k}^{A(\theta)} \big )^{-1}$, is 
   well-defined and we have the following:
   \begin{enumerate}[(i)]
   \item If $\Theta$ is a measurable space and $A$ is measurable, then $\big (\maP_{n+k}^{A} \big)^{-1}$
   is measurable.

   \item If $\Theta$ is a topological space and $A$ is continuous, then $\big (\maP_{n+k}^{A} \big)^{-1}$
   is continous.

   \item If $\Theta$ is an open subspace in a locally convex, topological vector
   space and $A$ is analytic, then $\big (\maP_{n+k}^{A} \big)^{-1}$
   is analytic.
   \end{enumerate}
\end{corollary}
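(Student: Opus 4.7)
The plan is to decompose the map $\theta \mapsto (\maP_{n+k}^{A(\theta)})^{-1}$ as a composition of three maps, each of which will manifestly preserve measurability, continuity, and analyticity, and then invoke the standard stability of these three properties under composition.

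First, I would observe that the map
\[
   \Phi : M_{m+1}(\check W^{n+k,\infty}(U_0)) \longrightarrow \maL(V_{n+k}; V_{n+k}^-)\,,\quad \Phi(A) := \maP_{n+k}^A\,,
\]
is a bounded \emph{linear} map. Linearity is immediate from the definition of $\maP_{n+k}^A$ in Definition~\ref{def.opsP.k}, and the bound $\|\maP_{n+k}^A\|_{n+k} \lesssim \|A\|_{\check W^{n+k,\infty}}$ is exactly Lemma~\ref{lemma.cont}. In particular, $\Phi$ is continuous and (as a bounded linear map between Banach spaces) is real-analytic.

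Second, Corollary~\ref{cor.thm.transmission} guarantees that, under our assumption that $\maP_n^{A(\theta)}$ is invertible for every $\theta \in \Theta$, the image $\Phi(A(\theta)) = \maP_{n+k}^{A(\theta)}$ actually lands in the open subset $\maL(V_{n+k}; V_{n+k}^-)^{-1}$ of invertible operators. Hence the composition
\[
   \theta \;\xmapsto{\ A\ }\; A(\theta) \;\xmapsto{\ \Phi\ }\; \maP_{n+k}^{A(\theta)} \;\xmapsto{\ \Psi\ }\; \bigl(\maP_{n+k}^{A(\theta)}\bigr)^{-1}
\]
is well-defined, where $\Psi : \maL(V_{n+k}; V_{n+k}^-)^{-1} \to \maL(V_{n+k}^-; V_{n+k})$ is the inversion map $T \mapsto T^{-1}$. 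It is classical that $\Psi$ is real-analytic on this open set (via the Neumann series expansion of $(T+H)^{-1}$ around any invertible $T$), hence also continuous and Borel-measurable.

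Finally, I would conclude the three parts separately by composition. For (iii), the composition of analytic maps between locally convex spaces is analytic, so if $A$ is analytic then so is $\Psi \circ \Phi \circ A$. For (ii), continuity is preserved under composition of continuous maps. For (i), the composition $\Psi \circ \Phi$ is continuous and $A$ is measurable, and a continuous map composed with a Borel-measurable map is Borel-measurable, so $(\maP_{n+k}^A)^{-1}$ is measurable. I do not anticipate any real obstacle here: once one recognises that $A \mapsto \maP_{n+k}^A$ is bounded linear and that Corollary~\ref{cor.thm.transmission} gives invertibility for free, the corollary reduces to the well-known regularity of the operator-inversion map, so the only thing worth being careful about is citing the correct notion of analyticity on the locally convex space in~(iii).
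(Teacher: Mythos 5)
Your proposal is correct and follows essentially the same route as the paper: factor $\theta \mapsto (\maP_{n+k}^{A(\theta)})^{-1}$ as $\Psi \circ \Phi \circ A$, use Lemma~\ref{lemma.cont} for the bounded linearity of $\Phi$, Corollary~\ref{cor.thm.transmission} for invertibility of the image, the analyticity (hence continuity and measurability) of the inversion map $\Psi$, and stability of each property under composition. The only cosmetic difference is that the paper treats part (i) explicitly and declares the other two parts ``completely similar,'' whereas you spell out all three compositions.
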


\begin{proof} Let us prove (i), since the other two points are completely similar.
   Since $M_{(m+1)}(\check W^{n+k, \infty}(U_0)) \ni B \to \maP_{n+k}^B 
   \in \maL(V_{n+k}^-;V_{n+k})$ is continuous and linear (Lemma~\ref{lemma.cont}), it follows that the 
   map $\maP_{n+k}^{A} : \Theta \to \maL(V_{n+k}^-;V_{n+k})$ is also measurable. By Corollary~\ref{cor.thm.transmission}, 
   we know that $\maP_{n+k}^{A} : \Theta \to \maL(V_{n+k}^-;V_{n+k})$
   has values invertible elements. Since $T \to T^{-1}$
   is measurable (even analytic!), the result follows from the fact that the composition of two measurable 
   maps is measurable. 
\end{proof}

\subsection{Integrability from uniform boundedness}
We now include a result that uses the full force of Theorem~\ref{thm.transmission}.
The main notation was recalled before the statement of Theorem \ref{thm.transmission},
and we continue to use it.
First we refine the invertibility result of Corollary~\ref{cor.thm.transmission} as 
follows:

\begin{corollary}\label{cor.thm.transmission2}
   Let $A \in {M_{m+1}(\check W^{n+k+1,\infty}(U_0))}$, $k, n \ge 0$, $\maP_k := \maP_k^A$,
   and suppose that the operator $\maP_n : V_n \to V_n^{-}$ is invertible.
   Then $\maP_{n+k+1} : V_{n+k+1} \to V_{n+k+1}^-$ is also invertible and its inverse has norm
   $$|||\maP_{n+k+1}^{-1}|||_{n+k+1} \, \lesssim \, |||\maP_n^{-1} |||_{n}^{k+2} \,
           \nnW{n+k}{a_{mm}^{-1}}^{(k+1)^2} 
         \|A\|_{\check W^{n+k+1,\infty}}^{(k+2)(k+1)}\,.$$
\end {corollary}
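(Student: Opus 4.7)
}
The plan is to apply Theorem~\ref{thm.transmission} to a generic $F\in V_{n+k+1}^-$ and then collapse the resulting sum of $k+2$ terms into its leading term using Lemma~\ref{lemma.product.ge2}. The invertibility of $\maP_{n+k+1}$ itself is immediate from Corollary~\ref{cor.thm.transmission} (applied with $k$ there replaced by $k+1$), so the only real content is the explicit norm bound on the inverse.

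Fix $F\in V_{n+k+1}^-$. Since $F\in V_n^-$ as well, Remark~\ref{rem.relation0} yields $\maP_{n+k+1}^{-1}F=\maP_n^{-1}F$, and Theorem~\ref{thm.transmission} gives
\begin{equation*}
\|\maP_{n+k+1}^{-1}F\|_{V_{n+k+1}} \lesssim \sum_{q=0}^{k+1} |||\maP_n^{-1}|||_{n}^{q+1}\, \nnW{n+k}{a_{mm}^{-1}}^{(q+1)k+1}\, \|A\|_{\check W^{n+k+1,\infty}}^{(q+1)(k+1)}\, \|F\|_{V_{n+k+1-q}^-}.
\end{equation*}
The scale inclusions $V_s^-\subset V_{s-1}^-$ for $s\ge 2$, combined with the continuous inclusion $I:V_1^-\to V_0^-$ of Remark~\ref{rem.def.I} for the bottom case, give $\|F\|_{V_{n+k+1-q}^-}\lesssim \|F\|_{V_{n+k+1}^-}$ for every $0\le q\le k+1$, with constants independent of $A$ and $F$. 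This factors $\|F\|_{V_{n+k+1}^-}$ out of the sum.

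It remains to show that each term $q<k+1$ is dominated by the $q=k+1$ term, which carries precisely the exponents claimed in the corollary: $|||\maP_n^{-1}|||_n^{k+2}$, $\nnW{n+k}{a_{mm}^{-1}}^{(k+2)k+1}=\nnW{n+k}{a_{mm}^{-1}}^{(k+1)^2}$, and $\|A\|_{\check W^{n+k+1,\infty}}^{(k+2)(k+1)}$. To pass from the $q$-th term to the $(k+1)$-th, I would invoke Lemma~\ref{lemma.product.ge2} in its two forms $1\lesssim |||\maP_n^{-1}|||_n\,\|A\|_{\check W^{n+k+1,\infty}}$ and $1\lesssim \nnW{n+k}{a_{mm}^{-1}}\,\|A\|_{\check W^{n+k+1,\infty}}$, multiplying by $(|||\maP_n^{-1}|||_n\|A\|)^{k+1-q}\cdot(\nnW{n+k}{a_{mm}^{-1}}\|A\|)^{k(k+1-q)}$. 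A direct check confirms the exponents line up: the power of $|||\maP_n^{-1}|||_n$ becomes $(q+1)+(k+1-q)=k+2$; the power of $\nnW{n+k}{a_{mm}^{-1}}$ becomes $(q+1)k+1+k(k+1-q)=(k+1)^2$; and the power of $\|A\|_{\check W^{n+k+1,\infty}}$ becomes $(q+1)(k+1)+(k+1-q)+k(k+1-q)=(k+1)(k+2)$.

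The only substantive obstacle is this exponent bookkeeping, which I expect to be routine once the correct auxiliary factors of $1\lesssim \,\cdot\,\|A\|$ are chosen; everything else is a citation. Since the sum has the fixed cardinality $k+2$, the absorption costs only a $k$-dependent constant, consistent with the $\lesssim$ convention fixed in Section~\ref{sec.sobolev}.
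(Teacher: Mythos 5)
Your proposal is correct and follows essentially the same route as the paper: apply Theorem~\ref{thm.transmission} to $F\in V_{n+k+1}^-$, bound $\|F\|_{V_{n+k+1-q}^-}\lesssim\|F\|_{V_{n+k+1}^-}$ via the scale inclusions, and absorb every term into the $q=k+1$ term using Lemma~\ref{lemma.product.ge2} (the paper phrases this as the coefficient being increasing in $q$ while the $F$-norm is decreasing). Your exponent bookkeeping matches the stated bound, so nothing is missing.
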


\begin{proof}
   Lemma~\ref{lemma.product.ge2}
   implies that $|||  \maP_n^{-1} |||_{n}^{q+1} \,
   \nnW{n +k}{a_{mm}^{-1}}^{(q+1)k+1} 
   \|A\|_{\check W^{n+k+1,\infty}}^{(q+1)(k+1)}$ is increasing in $q$. Since $\| F \|_{V_{k+2-q}^{-} }$
   is decreasing in $q$, the result follows.
\end{proof}

The above result is obviously true also for $k = -1$, as long as one defines $\nnW{n+k}{a_{mm}^{-1}}$
for $n = 0$ (actually, one can simply ignore that term for $n = 0$ and $k = -1$). In order to avoid
this discussion, it was convenient to state the above result (as well as Theorem~\ref{thm.transmission})
in the given range of $k$ (i.e. $k \ge 0$). We now shift back and replace $k+1$ with $k$.
We obtain the following consequence.

\begin{proposition}\label{prop.bounded}
   Let $n, k \in \ZZ_+$. Suppose that 
   \begin{enumerate}[(i)]
   \item $A : \Theta \to M_{(m+1)}(\check W^{n+k, \infty}(U_0))$
   is bounded;
   
   \item $\maP_n^{A(\theta)} : V_{n} \to V_{n}^-$ is invertible for
   all $\theta \in \Theta$; and
   
   \item the function $\theta \to \big (\maP_n^{A(\theta)} \big )^{-1} \in \maL(V_{n}^- ; V_{n})$
   is bounded. 
   \end{enumerate}
   Then the function $\big (\maP_{n+k}^{A} \big )^{-1}(\theta) := \big (\maP_{n+k}^{A(\theta)} \big ) ^{-1} \in \maL(V_{n+k}^- ; V_{n+k})$
   is also bounded. Consequently, if $\Theta$ is a probability space and $A$ is 
   measurable, then the functions $\big (\maP_{n+k}^{A} \big )^{-1}$ and $\| \big (\maP_{n+k}^{A} \big )^{-1}\|$
   are integrable on $\Theta$.
\end{proposition}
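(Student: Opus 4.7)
The plan is to combine Corollary~\ref{cor.thm.transmission2} with the measurability statement of Corollary~\ref{cor.mca}(i), and then invoke the fact that a bounded measurable function on a probability space is integrable. More concretely, after reindexing Corollary~\ref{cor.thm.transmission2} by replacing $k+1$ with $k$, I would assert that for each $\theta \in \Theta$ the operator $\maP_{n+k}^{A(\theta)}$ is invertible with
\begin{equation*}
   |||(\maP_{n+k}^{A(\theta)})^{-1}|||_{n+k} \ \lesssim \
   |||(\maP_n^{A(\theta)})^{-1}|||_{n}^{k+1} \,
   \nnW{n+k-1}{(a_{mm}^{A(\theta)})^{-1}}^{k^2}
   \|A(\theta)\|_{\check W^{n+k,\infty}}^{k(k+1)}.
\end{equation*}

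Next I would check that each factor on the right is uniformly bounded in $\theta$. The first factor is uniformly bounded by hypothesis~(iii), and the third is uniformly bounded by hypothesis~(i). For the middle factor, the strong ellipticity assumption in force in this section (Section~\ref{sec5}) gives $a_{mm}(\theta) \ge c > 0$ uniformly in $\theta$ and $x$, so combined with the $\check W^{n+k-1,\infty}$-bound on $a_{mm}$ coming from (i), the usual Faà di Bruno type estimate for $1/a_{mm}$ produces a uniform bound on $\nnW{n+k-1}{(a_{mm}^{A(\theta)})^{-1}}$. Thus the displayed polynomial bound yields a uniform constant for $|||(\maP_{n+k}^{A(\theta)})^{-1}|||_{n+k}$, proving the first (qualitative) claim.

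For the final sentence about integrability, Corollary~\ref{cor.mca}(i) gives measurability of the operator-valued map $(\maP_{n+k}^{A})^{-1} : \Theta \to \maL(V_{n+k}^-;V_{n+k})$, and postcomposing with the continuous norm yields measurability of $\theta \mapsto \|(\maP_{n+k}^{A(\theta)})^{-1}\|$. Since this scalar function is bounded on a probability space, it is integrable, and the operator-valued map is Bochner integrable by the same boundedness plus strong (or Bochner) measurability. The main subtle point is the control of $\nnW{n+k-1}{(a_{mm}^{-1})}$: it is the only ingredient of the bound in Corollary~\ref{cor.thm.transmission2} that is not directly listed among (i)--(iii), and it must be extracted from the uniform strong ellipticity assumption together with (i); the rest of the argument is a straightforward bookkeeping of hypotheses into the polynomial estimate.
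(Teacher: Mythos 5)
Your high-level strategy matches the paper's own proof almost exactly: apply Corollary~\ref{cor.thm.transmission2} to obtain invertibility of $\maP_{n+k}^{A(\theta)}$ together with the polynomial bound on $|||(\maP_{n+k}^{A(\theta)})^{-1}|||_{n+k}$, then combine with Corollary~\ref{cor.mca}(i) for measurability, and use that a bounded measurable scalar function on a probability space is integrable. You also correctly identify the factor $\nnW{n+k-1}{a_{mm}^{-1}}$ as the one ingredient of the estimate in Corollary~\ref{cor.thm.transmission2} that is not directly supplied by hypotheses (i)--(iii).

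The problem is how you propose to control it. You appeal to ``the strong ellipticity assumption in force in this section (Section~\ref{sec5})'', but Proposition~\ref{prop.bounded} is stated and proved in Section~\ref{sec4}, not Section~\ref{sec5}, and Section~\ref{sec4} makes no strong-ellipticity hypothesis. The assumptions you actually have are exactly (i)--(iii): $A$ bounded in $\check W^{n+k,\infty}$, $\maP_n^{A(\theta)}$ invertible for each $\theta$, and $(\maP_n^{A(\theta)})^{-1}$ uniformly bounded in operator norm. A uniform \emph{lower} bound on $a_{mm}^\alpha(\theta)$, which is what you need to invoke Lemma~\ref{lemma.inverse} and bound $\nnW{n+k-1}{a_{mm}^{-1}}$, is not a formal consequence of (i)--(iii) by the route you sketch; note that the paper explicitly flags in Definition~\ref{def.amm} that $\nnW{k}{a_{mm}^{-1}}$ may be $+\infty$. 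If you were proving Theorem~\ref{thm.cons.transmission} or Theorem~\ref{thm.integr} (which do assume $\RE A \ge \gamma I$), your estimate $a_{mm} \gtrsim \gamma > 0$ followed by Lemma~\ref{lemma.inverse} would be exactly the argument the paper uses; but for Proposition~\ref{prop.bounded} as stated you have inserted a hypothesis that isn't there. It is fair to point out that the paper's own one-sentence proof is silent about the $a_{mm}^{-1}$ factor as well, so you have isolated a genuine subtlety; nevertheless, your proposed resolution via Section~\ref{sec5}'s strong ellipticity does not apply here, and the gap remains.
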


\begin{proof} The first part follows right away from Corollary~\ref{cor.thm.transmission2}.
   The second part follows by combining the first part with Corollary~\ref{cor.mca}(i), since
   bounded, measurable functions on probability spaces are integrable.
\end{proof}

The main applications in this paper will be to extend the above integrability result to 
the case when the norms of the operators $\maP_n^{A(\theta)} \in \maL( V_{n}; V_{n}^-)$
are not uniformly bounded.

\section{Estimates for strongly elliptic operators and integrability}\label{sec5}

In this section, we include some further applications, most notably, an integrability
result that extends Proposition~\ref{prop.bounded} to the case when 
the norms of the operators $\maP_n^{A(\theta)} \in \maL( V_{n}; V_{n}^-)$
are not uniformly bounded. We use the same notation
and assumptions as in the last sections. In particular, $U_0 \subset \RR^m$ 
is bounded with smooth boundary and is endowed with a decomposition into subdomains $U_j$
along a smooth interface $\Gamma := (\bigcup_{k=1}^N \pa U_k) \smallsetminus \pa U_0$. 
We also allow the case $U_k = (a_k, b_k) \times \RR^{m-1}$,
for which the results of the previous sections are replaced with the results 
in Chapters 2 and 3 of~\cite{theseHassan},
or with the results in~\cite{HassanSimonVictor1}.

\begin{notation}\label{not.sec5}
See Sections \ref{ssec.2.2},~\ref{ssec.2.3}, and~\ref{ssec.Pk} for notation and assumptions.
More precisely, the sets $U_j$ are as in \ref{assum.decomposition}, the resulting spaces 
$\check W^{k,\infty}(U_0)$ and $\check H^{k}(U_0)$ are as in Definition 
\ref{def.broken.SS}, the spaces $V_n$ and $V_n^-$ are as in Definition \ref{def.broken},
and the operators $\maP_k^A$ are as in Definition \ref{def.opsP.k}. In particular,
$\overline{U}_0 = \cup_{j=1}^N U_j \cup \Gamma \cup \pa_D U_0 \cup \pa_N U_0$ is a
disjoint union with $U_j$ all open and $\Gamma$, $\pa_D U_0$, and $\pa_N U_0$
compact, smooth submanifolds of $\RR^m$. All the necessary notation was also introduced
in the Introduction.
\end{notation}

\subsection{General case}
We will need the following standard lemma (see, for example, 
Chapter 5 of~\cite{theseHassan} or one of the following papers 
\cite{Victor, Mirela1, HassanSimonVictor1}).

\begin{lemma}\label{lemma.inverse} Let $b \in \check W^{k,\infty}(U_0)$ 
   (see Definition   \ref{def.broken.SS}) be such that 
   $b^{-1} \in L^{\infty}(U_0)$. Then $b^{-1} \in \check W^{k,\infty}(U_0)$ and
   \begin{align*}
      \|b^{-1} \|_{\check W^{k,\infty}} \leq c_3 \|b^{-1} \|_{L^{\infty}}^{k+1} 
      \|b \|_{\check W^{k,\infty}}^k \,.
   \end{align*}
\end{lemma}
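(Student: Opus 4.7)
The plan is to proceed by induction on $k$, after first reducing the statement to the case of a single subdomain. Since $\check W^{k,\infty}(U_0) = \bigoplus_{j=1}^N W^{k,\infty}(U_j)$ with the direct sum norm, and since $b^{-1} \in L^\infty(U_0)$ implies that the restriction $b^{-1}|_{U_j}$ equals $(b|_{U_j})^{-1}$ on each piece, it suffices to prove the statement on a single domain $U_j$ with $b \in W^{k,\infty}(U_j)$ and $b^{-1} \in L^\infty(U_j)$; the constant $c_3$ for the broken space is then obtained by taking the maximum over $j=1,\dots,N$ of the corresponding single-piece constants.

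For the base case $k=0$, we have $\check W^{0,\infty} = L^\infty$ by Remark \ref{rem.L2}, and the inequality $\|b^{-1}\|_{L^\infty} \le \|b^{-1}\|_{L^\infty}$ holds trivially with $c_3 = 1$. For the inductive step, suppose the result holds up to order $k$ and let $b \in W^{k+1,\infty}(U_j)$ with $b^{-1} \in L^\infty(U_j)$. The key identity is obtained by differentiating the relation $b \cdot b^{-1} = 1$. For any multi-index $\alpha$ with $1 \le |\alpha| \le k+1$, the Leibniz rule yields
\begin{equation*}
  b \cdot \partial^\alpha(b^{-1}) \seq - \sum_{\substack{\beta \le \alpha \\ |\beta| \ge 1}} \binom{\alpha}{\beta} \, \partial^\beta b \cdot \partial^{\alpha-\beta}(b^{-1}) \,,
\end{equation*}
so that, multiplying by $b^{-1}$,
\begin{equation*}
  \partial^\alpha(b^{-1}) \seq - b^{-1} \sum_{\substack{\beta \le \alpha \\ |\beta| \ge 1}} \binom{\alpha}{\beta} \, \partial^\beta b \cdot \partial^{\alpha-\beta}(b^{-1}) \,.
\end{equation*}

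Taking $L^\infty$ norms, each factor $\|\partial^\beta b\|_{L^\infty}$ is bounded by $\|b\|_{W^{k+1,\infty}}$, and each $\|\partial^{\alpha-\beta}(b^{-1})\|_{L^\infty}$ with $|\alpha-\beta| \le k$ can be bounded via the inductive hypothesis by $c_3 \|b^{-1}\|_{L^\infty}^{k+1} \|b\|_{W^{k,\infty}}^k \le c_3 \|b^{-1}\|_{L^\infty}^{k+1} \|b\|_{W^{k+1,\infty}}^k$. Combining everything, for $|\alpha| \le k+1$,
\begin{equation*}
  \|\partial^\alpha(b^{-1})\|_{L^\infty} \ \lesssim \ \|b^{-1}\|_{L^\infty} \cdot \|b\|_{W^{k+1,\infty}} \cdot \|b^{-1}\|_{L^\infty}^{k+1} \|b\|_{W^{k+1,\infty}}^k \seq \|b^{-1}\|_{L^\infty}^{k+2} \|b\|_{W^{k+1,\infty}}^{k+1} \,,
\end{equation*}
with a constant depending only on $k+1$ and the dimension $m$ (coming from the binomial coefficients and the cardinality of the sum). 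Summing over $|\alpha| \le k+1$ gives the desired bound on $\|b^{-1}\|_{W^{k+1,\infty}}$, which closes the induction.

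The only mild subtlety is the initial reduction: one should make sure that on each $U_j$ the ``local'' inverse $(b|_{U_j})^{-1}$ really agrees with the restriction of the global $b^{-1}$ and that the $L^\infty$-boundedness of $b^{-1}$ on $U_0$ controls its $L^\infty$-boundedness on each $U_j$, which is immediate from the definition of $\check W^{k,\infty}$ and of $L^\infty(U_0)$. No single step is truly difficult; the main care is in tracking the powers of $\|b^{-1}\|_{L^\infty}$ and $\|b\|_{W^{k,\infty}}$ through the induction so as to recover exactly the exponents $k+1$ and $k$ claimed in the statement.
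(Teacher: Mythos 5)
The paper itself gives no proof of this lemma (it is quoted as standard, with pointers to the references), so there is nothing in-paper to compare against line by line; your Leibniz-rule induction is exactly the standard argument for such estimates, and your exponent bookkeeping is correct: the identity $\partial^\alpha(b^{-1}) = -\,b^{-1}\sum_{0\neq\beta\le\alpha}\binom{\alpha}{\beta}\partial^\beta b\,\partial^{\alpha-\beta}(b^{-1})$ contributes one factor of $\|b^{-1}\|_{L^\infty}$ and one factor of $\|b\|_{\check W^{k+1,\infty}}$ per induction step, which reproduces the exponents $k+1$ and $k$ of the statement. The reduction to a single subdomain and the base case are fine as well; for the term $\alpha=0$ in the final summation one should note (you leave it implicit) that $\|b\|_{L^\infty}\|b^{-1}\|_{L^\infty}\ge 1$, so the zeroth-order term is also dominated by the right-hand side.

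The one point you should make explicit is the \emph{existence} of the weak derivatives of $b^{-1}$: the membership $b^{-1}\in\check W^{k,\infty}(U_0)$ is part of the conclusion, and applying the Leibniz rule to $b\cdot b^{-1}=1$ up to order $k+1$ presupposes that $\partial^\alpha(b^{-1})$ exists, which is circular as written. The standard repair is short: on each $U_j$ (bounded with smooth boundary) one has $|b|\ge\|b^{-1}\|_{L^\infty}^{-1}>0$ a.e., so $b^{-1}=F\circ b$ with $F(z)=1/z$ smooth and Lipschitz on a set containing the essential range of $b$; the chain rule for Sobolev functions then gives $b^{-1}\in W^{1,\infty}(U_j)$ with $\partial_i(b^{-1})=-b^{-2}\partial_i b$, and since $W^{k,\infty}(U_j)$ is an algebra, the identity $\partial_i(b^{-1})=-b^{-1}\,b^{-1}\,\partial_i b$ bootstraps membership from $W^{k,\infty}$ to $W^{k+1,\infty}$ inside your induction, after which your formal computation (used only for the \emph{estimate}, where it gives the sharp exponents) is fully justified. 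With that line added, the proof is complete and agrees with the standard proofs in the works the paper cites.
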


We will denote $\RE A := \frac12 \big (A + A^* \big)$, for any
matrix $A \in M_{m+1}(\CC)$ (with $A^*$ the adjoint of $A$, i.e. its
transposed conjugate). We write $A \ge \gamma I_{m+1}$ if, for
any complex vector $\xi \in \CC^{m+1}$ on which $A$ acts,
we have $(A\xi, \xi) \ge \gamma \|\xi \|^2$. (Thus, $I_{m+1}$ is the identity 
matrix of $M_{m+1}(\CC)$.) We write $A > 0$ if
$A \ge 0$ and $A$ is invertible. If $A \in M_{m+1}(L^\infty(U_0))$,
the inequality $A \ge \gamma I_{m+1}$ means that $A(x) \ge \gamma I_{m+1}$
for almost all $x \in U_0$. In the following, we will also omit $A$ in the notation 
of the operators $P^A$ and $\maP_k^A$ when there is no risk of confusion.

\begin{lemma} \label{lemma.majoration}
   See Notation \ref{not.sec5} for a review of the notation. Let $A \in M_{m+1}(L^\infty(U_0))$.
   If $\RE A \ge \gamma I_{m+1}$, then $\maP_0 = \maP_0^A : V_0 \to V_0^-$ is invertible
   with $|||\maP_0^{-1}|||_0 \le \gamma^{-1}$.
\end{lemma}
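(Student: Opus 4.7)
The plan is to apply the complex Lax–Milgram lemma to the sesquilinear form $B^A$ of Definition \ref{def.eq.def.forme.B} on the Hilbert space $V_0 = H_D^1(U_0)$, equipped with the standard $H^1$-norm. Recall that by Definition \ref{def.opsP.k}, $\maP_0^A : V_0 \to V_0^* = V_0^-$ is precisely the operator obtained from $B^A$ via Riesz-type duality, $\langle \maP_0^A u, \phi \rangle = B^A(u, \phi)$. So invertibility of $\maP_0^A$ with $|||(\maP_0^A)^{-1}|||_0 \le \gamma^{-1}$ will follow from coercivity of $B^A$ with constant $\gamma$, together with its (obvious) continuity, which follows from $A \in M_{m+1}(L^\infty(U_0))$ and the Cauchy–Schwarz inequality (giving $|B^A(u,v)| \lesssim \|A\|_{L^\infty} \|u\|_{H^1} \|v\|_{H^1}$).

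The key step is the coercivity estimate. For $u \in V_0$, I introduce the pointwise vector
\[
    \xi(x) \ede \bigl( u(x), \partial_1 u(x), \ldots, \partial_m u(x) \bigr) \in \CC^{m+1},
\]
using the convention $\partial_0 = \mathrm{id}$ from Remark \ref{rem.d0}. Then directly from Definition \ref{def.eq.def.forme.B},
\[
    B^A(u, u) \seq \int_{U_0} \sum_{i,j=0}^m a_{ij}(x)\, \xi_j(x)\, \overline{\xi_i(x)} \, dx
    \seq \int_{U_0} \bigl( A(x)\xi(x), \xi(x) \bigr)_{\CC^{m+1}} dx.
\]
Taking real parts and using that $(A\xi, \xi) + \overline{(A\xi, \xi)} = 2 \RE(A\xi, \xi) = 2((\RE A)\xi, \xi)$, the hypothesis $\RE A \ge \gamma I_{m+1}$ (valid pointwise a.e.\ on $U_0$) gives
\[
    \RE B^A(u, u) \, \ge \, \gamma \int_{U_0} |\xi(x)|^2 \, dx
    \seq \gamma \bigl( \|u\|_{L^2}^2 + \|\nabla u\|_{L^2}^2 \bigr) \seq \gamma \|u\|_{V_0}^2.
\]

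With coercivity and continuity established, the Lax–Milgram lemma applied to $B^A$ on $V_0$ yields that for every $\ell \in V_0^-$, there is a unique $u \in V_0$ with $B^A(u, \cdot) = \ell$, i.e.\ $\maP_0^A u = \ell$, and the standard estimate from Lax–Milgram gives $\|u\|_{V_0} \le \gamma^{-1} \|\ell\|_{V_0^-}$, hence $|||(\maP_0^A)^{-1}|||_0 \le \gamma^{-1}$, as claimed.

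There is no real obstacle; the only point to be careful about is the bookkeeping that $\partial_0 = \mathrm{id}$ is indeed included in $\xi$ so that the full $H^1$-norm (and not only the seminorm $\|\nabla u\|_{L^2}$) is bounded below, which is exactly why coercivity holds on all of $H_D^1(U_0)$ without invoking Poincaré.
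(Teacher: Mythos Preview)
Your proof is correct and follows essentially the same approach as the paper: both establish the coercivity estimate $\RE B^A(u,u) \ge \gamma \|u\|_{H^1}^2$ directly from the pointwise hypothesis $\RE A \ge \gamma I_{m+1}$ (using the convention $\partial_0 = \mathrm{id}$ so that the full $H^1$-norm appears), and then invoke the Lax--Milgram lemma. Your version is simply a bit more explicit about the continuity of $B^A$ and the bookkeeping via the vector $\xi$.
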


\begin{proof}
Let $\maP_0 := \maP_0^A$. As $\gamma > 0$, we obtain for $u \in V_0 = H^1(U_0)$,
\begin{align*}
   \RE (\maP_0 u, u) \ede \RE B^A(u, u) &\seq \RE \int_{U_0} 
   \sum_{i,j = 0}^{m} a_{ij} \pa_i u \pa_j \overline{u}\, dx \\
   & \ge \gamma \int_{U_0} \sum_{i=0}^{m} |\pa_i u|^2 \, dx
   \, =:\, \gamma \|u\|_{H^1}^2 \,.
\end{align*}
So $\maP_0$ is invertible and $||| \maP_0^{-1}|||_{0} \le \gamma^{-1}$,
by Lax--Milgram's lemma. 
\end{proof}

\begin{theorem}\label{thm.cons.transmission}
Suppose that $U_0 = \bigcup_{k=1}^N U_k \subset \RR^m$ is bounded with $\pa U_k$
smooth, as usual. Let $A \in M_{m+1}(\check W^{k+1,\infty}(U_0))$ be such that 
$A \ge \gamma I_{m+1}$, with $\gamma >0$. Let $\maP_j := \maP_j^A : V_j \to V_j^-$ 
for all $j$ and $k \ge -1$. (See Notation \ref{not.sec5} for a review of the notation.)
Then, given $F \in V_{k+1}^{-}$, we have
\begin{align*}
     \|\maP_0^{-1} F \| _{V_{k+1}} \lesssim \sum_{q=0}^{k+1} \, 
      \gamma^{-q-1} \,
%      |a_{mm}^{-1}|_{\maV, k }
%      
      \nnW{k}{a_{mm}^{-1}}^{(q+1)k+1} 
      \|A\|_{\check W^{k+1,\infty}}^{(q+1)(k+1)} 
      \| F\|_{V_{k+1-q}^{-} } \,. 
\end{align*} 
In particular, $\maP_{k+1}$ is invertible. Let $K := (k+2)(k^2 + k + 1) + k$. Then
$$||| \maP_{k+1}^{-1} |||_{k+1} \le \gamma^{-K-1} \, 
\|A\|_{\check W^{k+1,\infty}}^{K} \,.$$
\end{theorem}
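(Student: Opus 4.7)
The theorem essentially specializes Theorem~\ref{thm.transmission} (with $n=0$) to the strongly elliptic setting: the abstract quantities $|||\maP_0^{-1}|||_0$ and $\nnW{k}{a_{mm}^{-1}}$ that appear in Theorem~\ref{thm.transmission} will now be bounded explicitly in terms of $\gamma$ and $\|A\|_{\check W^{k+1,\infty}}$. The plan has three steps: (i) bound $|||\maP_0^{-1}|||_0$ using the ellipticity, (ii) bound $\nnW{k}{a_{mm}^{-1}}$, and (iii) substitute both bounds into Theorem~\ref{thm.transmission} and extract the dominant term.

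\textbf{Steps (i) and first displayed inequality.} Since $A \ge \gamma I_{m+1}$ implies $\RE A \ge \gamma I_{m+1}$, Lemma~\ref{lemma.majoration} applies and yields both the invertibility of $\maP_0$ and the bound $|||\maP_0^{-1}|||_0 \le \gamma^{-1}$. Substituting this directly into the conclusion of Theorem~\ref{thm.transmission} (with $n=0$) gives the first displayed inequality of the statement.

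\textbf{Step (ii): controlling $\nnW{k}{a_{mm}^{-1}}$.} For each chart $\phi_\alpha$, the coefficient $a_{mm}^\alpha$ of $\pa_m^2$ in the pushed-forward operator $\phi_{\alpha*}(P^A)$ is, via the formula of Section~\ref{sssec.aalpha}, the value at $\eta = e_m$ of the principal symbol of $\phi_{\alpha*}(P^A)$, which equals the principal symbol of $P^A$ evaluated at $\xi = (D\phi_\alpha)^T e_m$. The ellipticity $A \ge \gamma I_{m+1}$ (applied to vectors of the form $(0,\xi_1,\dots,\xi_m)$) gives $\sum_{i,j=1}^m a_{ij}(x)\xi_i\xi_j \ge \gamma|\xi|^2$, and Assumption~\ref{assumpt.new}(4) guarantees that $|(D\phi_\alpha)^T e_m| \ge c > 0$ uniformly, with $c$ depending only on $\maV$. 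Hence $a_{mm}^\alpha \ge c\,\gamma$, so in particular $\|(a_{mm}^\alpha)^{-1}\|_{L^\infty} \lesssim \gamma^{-1}$. Lemma~\ref{lemma.inverse} then gives
\[
    \|(a_{mm}^\alpha)^{-1}\|_{\check W^{k,\infty}} \ \lesssim\ \gamma^{-(k+1)}\,\|a_{mm}^\alpha\|_{\check W^{k,\infty}}^{k} \ \lesssim\ \gamma^{-(k+1)}\,\|A\|_{\check W^{k,\infty}}^{k},
\]
where the last step uses the norm equivalence of Remark~\ref{rem.def.norm.A.var}. Taking the maximum over $\alpha$ yields $\nnW{k}{a_{mm}^{-1}} \lesssim \gamma^{-(k+1)}\,\|A\|_{\check W^{k,\infty}}^{k}$.

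\textbf{Step (iii): assembling the final bound.} Substituting both estimates into the first displayed inequality (and using $\|F\|_{V_{k+1-q}^-} \le \|F\|_{V_{k+1}^-}$) produces a sum in which the ratio of the $(q+1)$-th term to the $q$-th term equals $\gamma^{-1} \nnW{k}{a_{mm}^{-1}}^{k} \|A\|_{\check W^{k+1,\infty}}^{k+1}$. By Lemma~\ref{lemma.product.ge2} combined with $|||\maP_0^{-1}|||_0 \le \gamma^{-1}$, one has $1 \lesssim \gamma^{-1}\|A\|_{\check W^{k,\infty}}$, so each such ratio is bounded below by a positive constant and the sum is dominated by its $q = k+1$ term. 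That term contributes, up to constants,
\[
    \gamma^{-(k+2)}\,\bigl(\gamma^{-(k+1)}\|A\|_{\check W^{k+1,\infty}}^{k}\bigr)^{(k+2)k+1}\,\|A\|_{\check W^{k+1,\infty}}^{(k+2)(k+1)}.
\]
A routine expansion shows the exponent of $\|A\|_{\check W^{k+1,\infty}}$ equals $k((k+2)k+1) + (k+2)(k+1) = K$ and the exponent of $\gamma^{-1}$ equals $(k+2) + (k+1)((k+2)k+1) = K+1$, proving the stated bound. Invertibility of $\maP_{k+1}$ then follows either from Corollary~\ref{cor.thm.transmission2} or directly from the finiteness of $|||\maP_{k+1}^{-1}|||_{k+1}$ combined with the injectivity of $\maP_{k+1}$ inherited from $\maP_0$.

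\textbf{Main obstacle.} The delicate point is step (ii): verifying that $a_{mm}^\alpha$ inherits a uniform lower bound of size $\gamma$ from the pointwise inequality $A \ge \gamma I_{m+1}$. This is a standard principal-symbol computation under a diffeomorphism, but requires care to extract the formula for $a_{mm}^\alpha$ from Section~\ref{sssec.aalpha} and to use Assumption~\ref{assumpt.new}(4) to make the transition constants independent of the point and uniform across $\alpha$.
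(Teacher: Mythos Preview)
Your proof is correct and follows essentially the same route as the paper: invoke Lemma~\ref{lemma.majoration} for the bound $|||\maP_0^{-1}|||_0\le\gamma^{-1}$, use the ellipticity to get a lower bound $a_{mm}^\alpha\gtrsim\gamma$, apply Lemma~\ref{lemma.inverse} to control $\nnW{k}{a_{mm}^{-1}}$, and then plug everything into Theorem~\ref{thm.transmission}. In fact you supply more detail than the paper does: the paper's proof simply asserts ``$\gamma\lesssim a_{mm}$'' without the principal-symbol justification you give in step~(ii), and it omits the exponent arithmetic of step~(iii) entirely, writing only ``The rest is a consequence of Theorem~\ref{thm.transmission}.'' Your observation that $(k+2)k+1=(k+1)^2$ shows that your extraction of the $q=k+1$ term is the same computation as applying Corollary~\ref{cor.thm.transmission2} with $n=0$.
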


\begin{proof} 
The hypothesis of the theorem imply that $\maP_0 = \maP_0^A : V_0 := H_D^1(U_0) \to
V_0^{-} =: H_D^{1}(U_0)^*$ is invertible and
$||| \maP_0^{-1}|||_{0} \le \gamma^{-1}$, according to Lemma~\ref{lemma.majoration}.
Moreover, we also have $\gamma \lesssim a_{mm}$, so $\|a_{mm}^{-1}\|_{L^\infty} \lesssim \gamma^{-1}$.
It follows from Lemma~\ref{lemma.inverse} that
\begin{equation*}
%   |a_{mm}^{-1}|_{\maV, k } 
%   
   \nnW{k}{a_{mm}^{-1}} \le \gamma^{-k-1}
%   | a_{mm} |_{\maV, k }
%   
   \nnW{k}{a_{mm}^{-1}}^k \le \gamma^{-k-1} \|A\|_{\check W^{k+1,\infty}}^k \,.
\end{equation*}
The rest is a consequence of Theorem~\ref{thm.transmission}.
\end{proof}

We obtain the following consequence for the case where the coefficients are
random Gaussian variables defined on a measured space $\Omega$.

%
%\begin{theorem} \label{thm.integr}
%Let $X =( X_1, X_2, \ldots, X_q): \Omega \to \RR^q$ a vector random variable
%Gaussian with covariance $\sigma = (\sigma_{ij}) > 0$ and let $\gamma > 0$. Let $A_1, A_2, \ldots, A_q
%\in M_{m+1}(\check W^{k+1,\infty}(U_0))$, with $A_j \ge \gamma I$. We note $A := \sum_{j=1}^q e^{X_j} A_j$.
%Then, for all $p \in (0,\infty)$, $||| (\maP_{k+1}^A)^{-1} |||_{k+1}^p: \Omega \to (0, \infty)$ is integrable.
%\end{theorem}
%%
%\begin{proof}
%We have $\RE (A):= \RE \big (\sum_{j=1}^q e^{X_j} A_j \big ) \ge \gamma \sum_{j=1}^q e^{X_j}$.
%Similary,
%\begin{equation}
%   \|A\|_{\check W^{k+1,\infty}} \, \le \,
%   \sum_{j=1}^q e^{X_j}  \| A_j \|_{\check W^{k+1,\infty}}  \,.
%\end{equation}
%By Theorem~\ref{thm.cons.transmission}, we obtain
%\begin{align*}
%   ||| (\maP_{k+1}^A)^{-1} |||_{k+1} 
%%%   
%%   \le 
%%   \big( \gamma \sum_{j=1}^q e^{X_j}\big)^{-K-1} \,
%%   \|A\|_{\check W^{k+1,\infty}}^{K}
%%%
%   \le \big( \gamma \sum_{j=1}^q e^{X_j}\big)^{-K-1} \,
%   \Big (\sum_{j=1}^q e^{X_j} \|A_j\|_{\check W^{k,\infty}} \Big)^{K} 
%%
%   \le C \Big ( \sum_{j=1}^q e^{X_j} \Big)^{-1}
%\end{align*}
%which is in $L^p$, because
%$\big(\sum_{j=1}^q e^{x_j} \big)^{-p} \le e^{p(|x_1| + \ldots + |x_q|)}$
%is integrable with respect to the measure
%of density $e^{-(\sigma x, x)} \le e^{-\epsilon \|x\|^2}$.
%\end{proof}

\begin{theorem} \label{thm.integr}
Let $X =( X_1, X_2, \ldots, X_q): \Omega \to \RR^q$ be a Gaussian vector random variable
with covariance $\sigma = (\sigma_{ij}) > 0$ and let $\gamma > 0$. Let $A_1, A_2, \ldots, A_q
\in M_{m+1}(\check W^{k+1,\infty}(U_0))$, $\RE A_\ell \ge 0$ for all $1 \le \ell \le q$, such that, 
for all $x\in U_0$, there is $\ell$ with $\RE A_\ell(x) \ge \gamma I_{m+1}$. We let $A(\omega) 
:= \sum_{\ell=1}^q e^{X_\ell(\omega)} A_j$ and $\gamma(\omega)$ be the largest constant
such that $\gamma(\omega) I \le \RE A(\omega)$. Then, for all $0 < p, r, s < \infty$, the function
$$\Omega \ni \omega \to  \gamma(\omega)^{-s} \|A(\omega)\|_{\check W^{k+1}}^r
||| \big (\maP_{k+1}^{A(\omega)} \big )^{-1} |||_{k+1}^p \in (0, \infty)$$ is integrable.
\end{theorem}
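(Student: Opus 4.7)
The plan is to combine the polynomial estimate of Theorem~\ref{thm.cons.transmission} with the well-known fact that Gaussian random variables have finite exponential moments of every order.

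First, I would apply Theorem~\ref{thm.cons.transmission} to bound
$$|||(\maP_{k+1}^{A(\omega)})^{-1}|||_{k+1}^p \le \gamma(\omega)^{-p(K+1)} \|A(\omega)\|_{\check W^{k+1,\infty}}^{pK},$$
where $K := (k+2)(k^2+k+1)+k$. Substituting this into the quantity to be integrated and absorbing exponents into fresh constants, the problem reduces to showing that, for every $R,\,S > 0$, the function
$$\omega \longmapsto \gamma(\omega)^{-S} \, \|A(\omega)\|_{\check W^{k+1,\infty}}^R$$
is integrable on $\Omega$.

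Next, I would establish two purely deterministic inequalities. The triangle inequality combined with $\sum_\ell e^{X_\ell} \le q \max_\ell e^{X_\ell}$ gives
$$\|A(\omega)\|_{\check W^{k+1,\infty}}^R \le C_R \sum_{\ell=1}^q e^{R X_\ell(\omega)}.$$
For the lower bound on $\gamma(\omega)$, the hypothesis furnishes, for every $x \in U_0$, some index $\ell(x)$ with $\RE A_{\ell(x)}(x) \ge \gamma I_{m+1}$; combined with $\RE A_\ell \ge 0$ for all $\ell$ this yields
$$\RE A(\omega)(x) = \sum_{\ell=1}^q e^{X_\ell(\omega)} \RE A_\ell(x) \ge \gamma\, e^{X_{\ell(x)}(\omega)} I_{m+1} \ge \gamma\, e^{\min_\ell X_\ell(\omega)} I_{m+1},$$
uniformly in $x$. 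Hence $\gamma(\omega) \ge \gamma\, e^{\min_\ell X_\ell(\omega)}$, so
$$\gamma(\omega)^{-S} \le \gamma^{-S} \max_\ell e^{-S X_\ell(\omega)} \le \gamma^{-S} \sum_{\ell=1}^q e^{-S X_\ell(\omega)}.$$

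Finally, multiplying the two bounds and expanding the resulting products, integrability will follow termwise from
$$\int_\Omega e^{a X_\ell(\omega) + b X_{\ell'}(\omega)}\, d\PP(\omega) < \infty$$
for arbitrary $a, b \in \RR$ and all $\ell, \ell'$; this holds because $a X_\ell + b X_{\ell'}$ is a real Gaussian random variable (being a linear combination of the components of the Gaussian vector $X$), whose moment generating function is finite at every point. The only real obstacle is the passage from $\gamma(\omega)$ to $\min_\ell X_\ell(\omega)$: one must carefully use that \emph{all} summands of $\RE A(\omega)$ are nonnegative, which is exactly what allows a single dominant term indexed by $\ell(x)$ to be singled out pointwise in $x$ and then dominated by $e^{\min_\ell X_\ell(\omega)}$ uniformly.
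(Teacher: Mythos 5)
Your proposal is correct and follows essentially the same route as the paper: the deterministic bounds $\gamma(\omega)\ge\gamma\,e^{\min_\ell X_\ell(\omega)}$ (using that all summands have nonnegative real part) and $\|A(\omega)\|_{\check W^{k+1,\infty}}\le\sum_\ell e^{X_\ell(\omega)}\|A_\ell\|_{\check W^{k+1,\infty}}$ are fed into Theorem~\ref{thm.cons.transmission}, and integrability then follows from finiteness of Gaussian exponential moments. The only inessential differences are that you conclude termwise via moment generating functions of linear combinations of the $X_\ell$, whereas the paper dominates everything by $\big(\sum_\ell e^{|X_\ell|}\big)^a$ and integrates directly against the Gaussian density, and that you leave measurability implicit (the paper settles it by citing Corollary~\ref{cor.mca}).
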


\begin{proof} The given function is measurable by Corollary~\ref{cor.mca}(i).
We have 
\begin{equation}\label{eq.lb}
   \RE A(\omega) \ede  \RE \sum_{\ell=1}^q e^{X_\ell(\omega)} A_j  \ge \gamma\, 
   \min_{\ell=1}^q \{ e^{X_\ell(\omega)} \} \, I_{m+1} \,,
\end{equation} 
where $I_{m+1}$ is the identity matrix of $M_{m+1}(\CC)$, as before.
Hence $\gamma(\omega) \ge \gamma\, 
\min_{\ell=1}^q \{ e^{X_\ell(\omega)} \}$.
Similary, we have the estimate
\begin{equation}
   \|A(\omega)\|_{\check W^{k+1,\infty}} \, \le \,
   \sum_{j=1}^q e^{X_\ell(\omega)}  \| A_j \|_{\check W^{k+1,\infty}}  \,.
\end{equation}
By Theorem~\ref{thm.cons.transmission}, we obtain
\begin{align*}
   \gamma(\omega)^{-s/p} \|A(\omega)\|_{\check W^{k+1}}^{r/p} \, ||| (\maP_{k+1}^{A(\omega)})^{-1} |||_{k+1} 
   & \le \big( \gamma \min_{\ell=1}^q \{ e^{X_{\ell}(\omega)} \} \big)^{-K-s/p-1} \,
   \Big (\sum_{\ell=1}^q e^{X_\ell(\omega)} \|A_\ell\|_{\check W^{k,\infty}} \Big)^{K+r/p} \\
   & \le C \Big ( \sum_{\ell=1}^q e^{|X_\ell(\omega)|} \Big)^{2K + r/p + s/p + 1}\,.
\end{align*}
Hence
\begin{align*}
   \int_\Omega \gamma(\omega)^{-s/p}  \|A(\omega)\|_{\check W^{k+1}}^{r} \, ||| (\maP_{k+1}^{A(\omega)})^{-1} |||_{k+1}^p d\omega
   & \le \ C \, \int_\Omega \, \Big ( \sum_{\ell=1}^q e^{|X_\ell(\omega)|} \Big)^{2K + r/p + s/p + 1} d\omega \\
   & \seq \ C \, \int_{\RR^q} \, \Big ( \sum_{\ell=1}^q e^{|x_\ell|} \Big)^{a} e^{- (\sigma^{-1} x, x)} dx \, < \, \infty
\end{align*}
since, for all $a > 0$,
 $\big(\sum_{\ell=1}^q e^{|x_\ell|} \big)^{a} < (q + e^{|x_1| + 
\ldots + |x_q|})^a,$ which is integrable with respect to the measure of density 
$e^{-(\sigma^{-1} x, x)} \le e^{-\epsilon \|x\|^2}$ on $\RR^q$.
\end{proof}

\subsection{Poincaré inequality}
We shall say that the Poincar\'e inequality is satisfied by the functions in
$H_D^1(U_0) := \{ u \in H^1(U_0) \mid u\vert_{\pa_D U_0} = 0\}$ with
constant $\eta_{U}$ if, for all $u \in H_D^1(U_0)$, we have
\begin{equation}\label{eq.def.ineq.Poincare}
   |u|_{H^1(U_0)}^2 \ede \int_{U_0} |\nabla u(x)|dx \, \ge\, \eta_{U}^2 \|u\|_{H^1(U_0)}^2\,.
\end{equation}
If this condition is satisfied, then we get stronger results as follows.
Let $A = [a_{ij}] \in M_{m+1}(L^\infty(U_0))$ and $P^A$ and $\maP_k^A$ be the associated
operators, as before. Recall then that $A$ (or $P^A$, or, yet, $\maP_k^A$) 
is \emph{$\gamma$--strongly elliptic} if, for all $x \in U_0$ and all $\xi \in \CC^m$, we have
\begin{equation}\label{eq.gamma.se}
   \sum_{i,j=1}^m a_{ij}(x) \xi_i \overline{\xi}_j \ge \gamma |\xi|^2\,.
\end{equation}
The largest $\gamma$ with this property will be denoted $\gamma(A)$ and is called the 
\emph{coercivity constant} of $A$. Of course $\gamma(A) = \|(A + A^*)^{-1}\|^{-1}/2$ if $A$
is coercive.
Let $I' := 0 \oplus I_{m} \in M_{m+1}(\CC)$ be the projection with entries $(I')_{ij}$ given by 
\begin{equation}\label{eq.def.P0}
   \begin{cases} 
      (I')_{ii} = 1 & \mbox{ for } 1 \le i \le m \\
      (I')_{ij} = 0 & \mbox{ otherwise.}
   \end{cases}
\end{equation}
(Thus $I'$ differs from the identity matrix $I_{m+1} \in M_{m+1}(\CC)$ in only one entry.)
The strong ellipticity condition then becomes $I' A I' \ge \gamma I'$.
If $a_{0j} = a_{i0} = 0$ for all $0 \le i, j \le m$, 
then $I' A I' = A$ and the strong ellipticity condition becomes $A \ge \gamma I'$.
The interest of the concept of strong ellipticity is that operators satisfying this
condition have the following properties.

\begin{lemma} \label{lemma.majoration.FE}
Let $\gamma > 0$ and $A = [a_{ij}]\in M_{m+1}(L^\infty(U_0))$ be such that
$a_{i0} = a_{0j} = 0$ for all $1 \le i, j \le m$ and $\RE A \ge 0$. Suppose 
that $A \ge \gamma I'$ (i.e. $A$ is $\gamma$--strongly elliptic) and that
the Poincaré inequality is satisfied on $H_D^1(U_0)$ with constant $\eta_{U} > 0$.
Then
$\maP_0 := \maP_0^A: V_0 \to V_0^-$ is invertible with norm 
$|||\maP_0^{-1}|||_0 \le (\eta_{U} \gamma)^{-1}$.
\end{lemma}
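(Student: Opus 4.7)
The plan is a direct application of the Lax--Milgram lemma, following the same template as Lemma~\ref{lemma.majoration}, with the Poincar\'e inequality compensating for the absence of a zero-th order coercivity term.

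First I would use the hypothesis $a_{i0}=a_{0j}=0$ for $1\le i,j\le m$ to simplify the sesquilinear form $B^A$ of Definition~\ref{def.eq.def.forme.B} (specialized to $v=u$) to
\begin{equation*}
   B^A(u,u) \seq \int_{U_0}\sum_{i,j=1}^m a_{ij}\,\pa_j u\,\pa_i\overline u\,dx \ + \ \int_{U_0} a_{00}\,|u|^2\,dx,
\end{equation*}
so that only the principal part and the scalar $a_{00}$-term survive.

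Next I would take real parts and bound the two pieces separately. For the principal part, the $\gamma$-strong ellipticity $A\ge\gamma I'$, in the form of Equation~\eqref{eq.gamma.se}, applied pointwise to $\xi=\nabla u(x)\in\CC^m$, gives
\begin{equation*}
   \RE\int_{U_0}\sum_{i,j=1}^m a_{ij}\,\pa_j u\,\pa_i\overline u\,dx \ \ge \ \gamma \int_{U_0}|\nabla u|^2\,dx \seq \gamma\,|u|^2_{H^1(U_0)}.
\end{equation*}
For the zero-th order part, the assumption $\RE A\ge 0$ forces the diagonal entries of the Hermitian matrix $\RE A$ to be nonnegative a.e., and in particular $\RE a_{00}(x)\ge 0$ a.e., so $\RE\!\int_{U_0}a_{00}|u|^2\,dx\ge 0$. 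The Poincar\'e inequality~\eqref{eq.def.ineq.Poincare} applied to $u\in V_0=H_D^1(U_0)$ then yields
\begin{equation*}
   \RE B^A(u,u)\ \ge\ \gamma\,|u|^2_{H^1(U_0)} \ \ge \ \gamma\,\eta_U^2\,\|u\|_{H^1(U_0)}^2 \qquad \forall u\in V_0.
\end{equation*}

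Finally, combining this coercivity with the obvious continuity $|B^A(u,v)|\lesssim\|A\|_{L^\infty}\|u\|_{H^1}\|v\|_{H^1}$, the Lax--Milgram lemma gives that $\maP_0=\maP_0^A:V_0\to V_0^-$ is invertible and that $|||\maP_0^{-1}|||_0\le(\gamma\eta_U^2)^{-1}$ (the exponent on $\eta_U$ stated in the lemma appears to be a typographical slip; the natural coercivity constant produced by this argument is $\gamma\eta_U^2$). There is no real obstacle here: the argument is classical, and the only care required is the bookkeeping of the extra $a_{00}$ term, which is handled by the semi-definiteness of $\RE A$.
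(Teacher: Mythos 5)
Your proof is correct and follows essentially the same route as the paper's: simplify $B^A(u,u)$ using $a_{i0}=a_{0j}=0$, bound the principal part below by $\gamma|u|_{H^1}^2$ via strong ellipticity, discard the nonnegative $a_{00}$-term (justified, as you note, by $\RE A\ge 0$ implying $\RE a_{00}\ge 0$ pointwise), apply Poincar\'e, and invoke Lax--Milgram. You are also right that the coercivity constant this argument yields is $\gamma\eta_U^2$, so the bound should read $|||\maP_0^{-1}|||_0\le(\gamma\eta_U^2)^{-1}$; the paper's own proof contains the same slip (its displayed chain has $\gamma^2|u|_{H^1}^2$ and concludes $(\eta_U\gamma)^2\|u\|_{H^1}^2$, neither of which is consistent with the stated $(\eta_U\gamma)^{-1}$), so your flagging of the exponent discrepancy is well founded.
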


\begin{proof} 
We have
\begin{equation*}
   \RE (\maP_0^A u, u) \seq B^A(u, u) \ge \gamma^2 |u|_{H^1(U_0)}^2 + a_{00} \|u\|_{L^2}
   \ge (\eta_{U} \gamma)^2 \|u\|_{H^1(U_0)}^2 \,,
\end{equation*}
then the Lax--Milgram's Lemma gives the results in view of the definitions of 
$V_0 := H_D^1(U_0)$ and $V_0^- := V_0^*$.
\end{proof}

We immediately obtain the following consequence, with essentially the same proof
as that of Theorem~\ref{thm.cons.transmission}. See Notation \ref{not.sec5} for 
a review of the notation.

\begin{theorem} \label{thm.cons.transmission2}
Suppose that $A \in \check W^{k+1,\infty}(U_0)$
satisfies the hypotheses of Lemma~\ref{lemma.majoration.FE}.
Let $\maP_j := \maP_j^A : V_j \to V_j^-$ for all $j$ and $k \ge -1$. Then, given $F \in V_{k+1}^{-}$ we have
\begin{align*}
     \|\maP_0^{-1} F \| _{V_{k+1}} \lesssim \sum_{q=0}^{k+1} \, 
      \gamma^{-q-1} \,
%      |a_{mm}^{-1}|_{\maV, k }
%      
      \nnW{k}{a_{mm}^{-1}}^{(q+1)k+1} 
      \|A\|_{\check W^{k+1,\infty}}^{(q+1)(k+1)} 
      \| F\|_{V_{k+1-q}^{-} } \,. 
\end{align*} 
In particular, $\maP_{k+1}$ is invertible. Let $K := (k+2)(k^2 + k + 1) + k$. Then
$$||| \maP_{k+1}^{-1} |||_{k+1} \lesssim \gamma^{-K-1} \,
      \|A\|_{\check W^{k+1,\infty}}^{K} \,.$$
\end{theorem}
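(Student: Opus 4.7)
The plan is to combine three ingredients exactly as in the proof of Theorem \ref{thm.cons.transmission}, but using Lemma \ref{lemma.majoration.FE} in place of Lemma \ref{lemma.majoration}. First I would invoke Lemma \ref{lemma.majoration.FE} to obtain the bound $|||\maP_0^{-1}|||_0 \le (\eta_U \gamma)^{-1} \lesssim \gamma^{-1}$, absorbing the Poincar\'e constant $\eta_U$ (which depends only on $U_0$ and $\pa_D U_0$, not on $A$) into the $\lesssim$ notation. This reduces our hypothesis to the invertibility assumption needed by Theorem \ref{thm.transmission}.

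Next, I would verify that the localized lower bound $\|(a_{mm}^\alpha)^{-1}\|_{L^\infty(W_\alpha')} \lesssim \gamma^{-1}$ holds for every coordinate chart. Using the change-of-variable formula from Equation \eqref{eq.def.Aalpha}, since $a_{i0} = a_{0j} = 0$ for $i,j \ge 1$, one has
\begin{equation*}
   a_{mm}^\alpha \seq \sum_{i,j=1}^m Z_{im}\, (a_{ij}\circ \phi_\alpha^{-1})\, Z_{jm}
   \ge \gamma |\vec Z_m^\alpha|^2\,,
\end{equation*}
where $\vec Z_m^\alpha := (Z_{1m},\dots, Z_{mm})$ is bounded below uniformly, since the diffeomorphisms $\phi_\alpha$ are part of the fixed data $\maU$ and have components in $W^{\infty,\infty}$. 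Lemma \ref{lemma.inverse} applied to each $a_{mm}^\alpha$ then yields
\begin{equation*}
   \nnW{k}{a_{mm}^{-1}} \, \lesssim \, \gamma^{-k-1} \|A\|_{\check W^{k+1,\infty}}^k \,.
\end{equation*}

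With these two estimates in hand, the first displayed inequality of the theorem follows by substituting them term by term into the conclusion of Theorem \ref{thm.transmission} (with $n=0$). For the closing estimate on $|||\maP_{k+1}^{-1}|||_{k+1}$, I would invoke Corollary \ref{cor.thm.transmission2} (again with $n=0$), which gives $|||\maP_{k+1}^{-1}|||_{k+1} \lesssim |||\maP_0^{-1}|||_0^{k+2}\, \nnW{k}{a_{mm}^{-1}}^{(k+1)^2}\, \|A\|_{\check W^{k+1,\infty}}^{(k+2)(k+1)}$. Substituting $|||\maP_0^{-1}|||_0 \lesssim \gamma^{-1}$ and the bound for $\nnW{k}{a_{mm}^{-1}}$ produces a product of the form $\gamma^{-K-1}\|A\|^K$; a short arithmetic check confirms the exponents, since
\begin{equation*}
   (k+2) + (k+1)(k+1)^2 \seq k^3 + 3k^2 + 4k + 3 \seq K+1
\end{equation*}
and $k(k+1)^2 + (k+2)(k+1) = (k+1)(k^2 + 2k + 2) = K$, with $K := (k+2)(k^2 + k + 1) + k$.

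The only step that requires real care is the local lower bound on $a_{mm}^\alpha$, since our strong ellipticity is only of the weaker form $A \ge \gamma I'$ (not $\RE A \ge \gamma I_{m+1}$) and must be transferred through the coordinate change $\phi_\alpha$. This is made possible by the structural assumption $a_{i0} = a_{0j} = 0$, which guarantees that the quadratic form acts only on the pure-derivative directions, precisely where $I'$ provides coercivity. Everything else is direct substitution into the machinery already established in Sections \ref{sec3} and \ref{sec4}.
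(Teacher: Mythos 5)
Your proposal is correct and follows essentially the same route as the paper, which proves this theorem by re-running the proof of Theorem~\ref{thm.cons.transmission} with Lemma~\ref{lemma.majoration.FE} supplying $|||\maP_0^{-1}|||_0\le(\eta_U\gamma)^{-1}$, then bounding $\nnW{k}{a_{mm}^{-1}}\lesssim\gamma^{-k-1}\|A\|_{\check W^{k+1,\infty}}^{k}$ via Lemma~\ref{lemma.inverse} and concluding from Theorem~\ref{thm.transmission} and Corollary~\ref{cor.thm.transmission2}. Your explicit verification that the ellipticity constant transfers to the localized coefficient $a_{mm}^\alpha$ through the fixed charts, and your arithmetic check of the exponent $K$, are details the paper leaves implicit, and they check out.
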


%The hypothesis of Lemma \ref{lemma.majoration.FE} and of Theorem \ref{thm.cons.transmission2}
%are that $A \ge \gamma I'$, where $I'$ is the matrix of 
   
We obtain the following consequence (with the same proof as Theorem~\ref{thm.integr}).
We use the hypotheses of the Theorems~\ref{thm.integr} and~\ref{thm.cons.transmission2}
(listed below):

\begin{theorem} \label{thm.integr2} 
   We continue to use Notation \ref{not.sec5}. Let $\gamma > 0$ and 
   $A_1, A_2, \ldots, A_q \in M_{m+1}(\check W^{k+1,\infty}(U_0))$ be matrices such that 
   \begin{enumerate}[(i)]
      \item $(A_\ell)_{i0} = (A_\ell)_{0j} = 0$, for all $1 \le i, j \le m$ and all $1 \le \ell \le q$,
  
      \item $\RE A_\ell \ge 0$ for all $1 \le \ell \le q$, and 

      \item \label{item.P0}
      for each $x \in U_0$, there is $1 \le \ell \le q$ such that $\RE A_\ell(x) \ge \gamma I'$,
      where $I' := 0 \oplus I_{m}$ (see Equation~\eqref{eq.def.P0}).
   \end{enumerate}
   We assume that the Poincaré inequality is satisfied on $H_D^1(U_0)$. Let $X: \Omega \to \RR^q$
   be Gaussian as in Theorem~\ref{thm.integr} and $A(\omega) := \sum_{\ell=1}^q e^{X_\ell(\omega)} A_\ell$. 
   Then $A(\omega)$ is coercive with coercivity constant $\gamma(\omega) := \gamma(A(\omega))$ such that,
   for all $0 < p, r, s < \infty,$ the function
   $$\Omega \ni \omega \to \gamma(\omega)^{-s}\|A\|_{\check W^{k+1}}^r 
      ||| \big (\maP_{k+1}^{A(\omega)} \big )^{-1} |||_{k+1}^p \in (0, \infty)$$ is integrable.
\end{theorem}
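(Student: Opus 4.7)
The plan is to mimic the proof of Theorem~\ref{thm.integr} line by line, substituting Theorem~\ref{thm.cons.transmission2} (strong ellipticity plus Poincar\'e inequality) for Theorem~\ref{thm.cons.transmission} (positivity of the full coefficient matrix). Measurability of the integrand
\begin{equation*}
   \omega \,\mapsto\, \gamma(\omega)^{-s}\, \|A(\omega)\|_{\check W^{k+1}}^r \, ||| (\maP_{k+1}^{A(\omega)})^{-1} |||_{k+1}^p
\end{equation*}
follows from Corollary~\ref{cor.mca}(i): the map $\omega \mapsto A(\omega)$ is analytic into $M_{m+1}(\check W^{k+1,\infty}(U_0))$, hence $\omega \mapsto \maP_{k+1}^{A(\omega)}$ is analytic into $\maL(V_{k+1};V_{k+1}^-)$, and passing to the inverse followed by taking the operator norm (and raising to the power $p$) preserves measurability. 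The same remark applies to $\|A(\omega)\|_{\check W^{k+1,\infty}}^r$ and to $\gamma(\omega)^{-s}$.

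The key structural step is to verify that $A(\omega)$ satisfies the hypotheses of Theorem~\ref{thm.cons.transmission2} for a.e.~$\omega$. First, since each $A_\ell$ has $(A_\ell)_{i0} = (A_\ell)_{0j} = 0$ for $1 \le i,j \le m$, so does the linear combination $A(\omega) = \sum_\ell e^{X_\ell(\omega)} A_\ell$. Second, for each $x \in U_0$ I pick $\ell(x)$ with $\RE A_{\ell(x)}(x) \ge \gamma I'$, and then using $\RE A_j(x) \ge 0$ for the remaining indices I obtain pointwise
\begin{equation*}
   \RE A(\omega)(x) \,\ge\, e^{X_{\ell(x)}(\omega)}\, \gamma I' \,\ge\, \gamma \Big(\min_{1 \le \ell \le q} e^{X_\ell(\omega)}\Big) I',
\end{equation*}
so $\gamma(\omega) \ge \gamma\, \min_\ell e^{X_\ell(\omega)}$. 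At the same time
\begin{equation*}
   \|A(\omega)\|_{\check W^{k+1,\infty}} \,\le\, \sum_{\ell=1}^q e^{X_\ell(\omega)}\, \|A_\ell\|_{\check W^{k+1,\infty}} \,.
\end{equation*}
With $K := (k+2)(k^2+k+1)+k$, Theorem~\ref{thm.cons.transmission2} then gives
\begin{equation*}
   ||| (\maP_{k+1}^{A(\omega)})^{-1} |||_{k+1} \,\lesssim\, \gamma(\omega)^{-K-1}\, \|A(\omega)\|_{\check W^{k+1,\infty}}^K \,.
\end{equation*}

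Using $\gamma(\omega)^{-1} \le \gamma^{-1} \sum_\ell e^{|X_\ell(\omega)|}$ to absorb the negative powers of $\gamma(\omega)$ and the bound above to absorb the positive powers of $\|A(\omega)\|_{\check W^{k+1,\infty}}$, the integrand is dominated pointwise by
\begin{equation*}
   C\, \Big(\sum_{\ell=1}^q e^{|X_\ell(\omega)|}\Big)^{a}, \qquad a \,:=\, r + s + p(2K+1),
\end{equation*}
for some constant $C$ depending on $\gamma$, $p,r,s,k$, and the $\|A_\ell\|_{\check W^{k+1,\infty}}$. The final step is identical to the closing one-line computation in the proof of Theorem~\ref{thm.integr}: since $X$ is a non-degenerate Gaussian vector with covariance $\sigma > 0$, the crude estimate $(\sum_\ell e^{|x_\ell|})^a \le (q + e^{|x_1|+\dots+|x_q|})^a$ reduces matters to the finiteness of $\int_{\RR^q} e^{a(|x_1|+\dots+|x_q|)}\, e^{-(\sigma^{-1}x,x)}\, dx$, which is clear because the Gaussian quadratic decay dominates any linear-exponential growth. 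I do not expect any genuine obstacle in the proof; the only subtle bookkeeping point is that in this sign-convention the hypothesis $\RE A_\ell \ge 0$ (rather than $\RE A_\ell \ge \gamma I_{m+1}$) is what lets me conclude $\RE A(\omega) \ge \gamma(\min_\ell e^{X_\ell(\omega)}) I'$ pointwise, and the role of the Poincar\'e inequality is precisely to convert strong ellipticity in the sense of $I'$ into coercivity on $V_0 = H_D^1(U_0)$ as required by Lemma~\ref{lemma.majoration.FE}.
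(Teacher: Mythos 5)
Your proof is correct and follows essentially the same route as the paper: the paper's own proof simply notes that the bound $\RE A(\omega) \ge \gamma \min_\ell e^{X_\ell(\omega)} I'$ replaces Equation~\eqref{eq.lb}, that the conditions $(A(\omega))_{i0}=(A(\omega))_{0j}=0$ and $\RE A(\omega)\ge 0$ are inherited from the $A_\ell$, and then invokes Theorem~\ref{thm.cons.transmission2} in place of Theorem~\ref{thm.cons.transmission} and repeats the Gaussian-domination argument of Theorem~\ref{thm.integr}, which is exactly what you do (you just write out the exponent bookkeeping and the final Gaussian integral explicitly).
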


The conditions $(A_\ell)_{i0} = (A_\ell)_{0j} = 0$, for all $0 \le i, j \le m$ and all $1 \le \ell \le q$
model homogeneous second order operators (such as ``sign-changing Laplacians''). Our operators 
are slightly more general since we allow $a_{00} \ge 0$.
They can be replaced with some less restrictive conditions, but then the results are 
more technical to state. The condition \eqref{item.P0} means, of course, that the operator associated to 
$A_\ell$ is strongly elliptic at $x$ and hence that the sum $A(\omega) := \sum_{\ell=1}^q e^{X_\ell(\omega)} A_\ell$
is strongly elliptic everywhere.

\begin{proof} Equation \eqref{eq.lb} of the proof of Theorem 
   \ref{thm.integr} becomes
   \begin{equation*}
      \RE A(\omega) \ede  \RE \sum_{\ell=1}^q e^{X_\ell(\omega)} A_j  \ge \gamma\, 
      \min_{\ell=1}^q \{ e^{X_\ell} \} \, I' \,.
   \end{equation*}
   We also have that $\RE A(\omega) \ge 0$ and that $(A(\omega))_{i0} = (A(\omega))_{0j}$
   for all $1 \le i, j \le m$. Thus we can use Theorem \ref{thm.cons.transmission2}
   (instead of Theorem \ref{thm.cons.transmission}) to complete the proof as in the 
   proof of Theorem \ref{thm.integr}.
\end{proof}

\subsection{Application to the Finite Element Method}
Let $S_N \subset H_D^1$ be a sequence of finite dimensional vector subspaces. 
We let $Q_N$ denote the projection of $u \in H_D^1$ on $S_N$ in the norm $\|\cdot \|_{H^1}$.
We suppose that there exist $k > 1$, $C_{rate} > 0$, and $\mu > 0$ such that, 
for all $u \in \check H^{k+1}(U_0)$, we have
\begin{equation}\label{eq.conv.rates}
   \| u - Q_N u \|_{H^1(U_0)} \le C_{rate} \dim(S_N)^{-\mu} \|u\|_{\check H^{k+1}(U_0)}\,.
\end{equation}
See~\cite{qingqin} for an example of such subspaces using the Generalized Finite
Element spaces and~\cite{BNZ3D2} for an example of such subspaces using anisotropically
graded meshes on 3D polyhedral domains. Let us denote by $Q_N^A$ the projection onto 
$S_N$ in the inner product $B^A$ whenever $A = A^*$ and $A$ satisfies the hypotheses of 
Lemma \ref{lemma.majoration.FE} (this guarantees then that $B^A$ defines an inner product
on $H_D^1(U_0)$).

\begin{proposition} \label{prop.comp.FEM}
   We keep the notation and 
   assumptions of Lemma \ref{lemma.majoration.FE} and of Equation \eqref{eq.conv.rates}. 
   Let $A = A^*$ and $Q_N^A$ be the projection onto $S_N$ in the inner product $B^A$. Then
   \begin{equation*}
      \|u - Q_N^A u\|_{H^1(U_0)} \le C_{rate} (\gamma \eta_U)^{-1/2} 
      \|A\|_{L^\infty}^{1/2} \dim(S_N)^{-\mu} \|u\|_{\check H^{k+1}(U_0)}\,.
   \end{equation*}
\end{proposition}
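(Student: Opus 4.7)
The strategy is the classical C\'ea lemma together with the hypothesis~\eqref{eq.conv.rates}. Because $A = A^*$ and $A$ satisfies the assumptions of Lemma~\ref{lemma.majoration.FE}, the sesquilinear form $B^A$ is Hermitian, continuous and coercive on $H_D^1(U_0)$, so it defines an inner product whose induced norm $\|w\|_{A} := \sqrt{B^A(w,w)}$ is equivalent to $\|\cdot\|_{H^1(U_0)}$. In particular, $Q_N^A$ is well-defined as the $B^A$-orthogonal projection onto the finite dimensional subspace $S_N \subset H_D^1(U_0)$, and the Galerkin/Pythagorean identity
\begin{equation*}
    B^A(u - Q_N^A u,\, u - Q_N^A u) \, = \, \min_{v \in S_N} B^A(u - v,\, u - v)
\end{equation*}
holds. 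This is the key algebraic fact.

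First, I would record the quantitative constants. Revisiting the proof of Lemma~\ref{lemma.majoration.FE}, strong ellipticity of $A$ combined with Poincar\'e gives a coercivity estimate of the form
\begin{equation*}
    B^A(w, w) \, \ge \, \alpha \, \|w\|_{H^1(U_0)}^2 \,, \qquad w \in H_D^1(U_0) \,,
\end{equation*}
with $\alpha = \gamma \eta_U$ (this is essentially the same computation as in the proof of Lemma~\ref{lemma.majoration.FE}, which yielded $\|\maP_0^{-1}\| \le (\gamma \eta_U)^{-1}$). On the other hand, the Cauchy--Schwarz inequality applied pointwise to the quadratic expression defining $B^A$, together with $|a_{ij}(x)| \le \|A\|_{L^\infty(U_0)}$, yields the continuity bound
\begin{equation*}
    |B^A(w, w)| \, \le \, M\, \|w\|_{H^1(U_0)}^2 \,, \qquad M \, \lesssim \, \|A\|_{L^\infty(U_0)}\,.
\end{equation*}

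The second step is to combine these with the Galerkin minimisation. Taking $v = Q_N u \in S_N$ as a competitor in the minimum above, I would chain
\begin{equation*}
    \alpha \, \|u - Q_N^A u\|_{H^1(U_0)}^2 \, \le \, B^A(u - Q_N^A u,\, u - Q_N^A u) \, \le \, B^A(u - Q_N u,\, u - Q_N u) \, \le \, M\, \|u - Q_N u\|_{H^1(U_0)}^2\,,
\end{equation*}
which gives the C\'ea-type bound
\begin{equation*}
    \|u - Q_N^A u\|_{H^1(U_0)} \, \le \, \sqrt{M/\alpha}\, \|u - Q_N u\|_{H^1(U_0)} \, \lesssim \, (\gamma \eta_U)^{-1/2}\, \|A\|_{L^\infty(U_0)}^{1/2}\, \|u - Q_N u\|_{H^1(U_0)}\,.
\end{equation*}

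The final step is to invoke the hypothesised approximation rate~\eqref{eq.conv.rates} for $Q_N$, which bounds $\|u - Q_N u\|_{H^1(U_0)}$ by $C_{rate}\, \dim(S_N)^{-\mu}\, \|u\|_{\check H^{k+1}(U_0)}$; substituting yields exactly the claimed inequality. There is essentially no obstacle here beyond bookkeeping: the only delicate point is ensuring the precise form of the coercivity constant extracted from Lemma~\ref{lemma.majoration.FE}, so that the final product $\sqrt{M/\alpha}$ matches the factor $(\gamma\eta_U)^{-1/2}\|A\|_{L^\infty}^{1/2}$ written in the statement; once that bookkeeping is done, the proof is a one-line application of C\'ea combined with~\eqref{eq.conv.rates}.
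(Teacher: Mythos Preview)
Your proposal is correct and follows essentially the same argument as the paper: both use coercivity of $B^A$ with constant $\gamma\eta_U$ (from Lemma~\ref{lemma.majoration.FE}) and continuity with constant $\|A\|_{L^\infty}$ to compare the $B^A$-orthogonal projection $Q_N^A$ to the $H^1$-orthogonal projection $Q_N$, then invoke~\eqref{eq.conv.rates}. The paper phrases the intermediate step as a comparison of distances $\dist_{B^A}(u,S_N)\le \|A\|_{L^\infty}^{1/2}\dist_{\|\cdot\|_{H^1}}(u,S_N)$ rather than naming it as C\'ea, but the content is identical.
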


\begin{proof} Let $\dist_{(\, , \,)}$ be the distance in some inner product $(\, , \, )$
   (or in some norm induced from an inner product). 
   We have 
   $$\gamma \eta_U\|u\|_{H^1}^2 \le \gamma \|\nabla u\|_{L^2}^2 \le B^A(u, u) \le \|A\|_{L^\infty} \|u\|_{H^1}^2\,.$$ 
   Hence we have successively
   \begin{align*}
      \|u - Q_N^A u\|_{H^1(U_0)} &  \le (\gamma \eta_U)^{-1/2} 
      B^A(u - Q_N^A u, u - Q_N^A u)^{1/2} \\
      & \seq (\gamma \eta_U)^{-1/2}  \dist_{B^A}(u, S_N) \\
      & \le (\gamma \eta_U)^{-1/2} \|A\|_{L^\infty}^{1/2} \dist_{\|\,\cdot \,\|_{H^1}}(u, S_N)\\
      & \seq (\gamma \eta_U)^{-1/2} \|A\|_{L^\infty}^{1/2} \|u - Q_N u\|_{H^1(U_0)}\\
      & \leq C_{rate} (\gamma \eta_U)^{-1/2} \|A\|_{L^\infty}^{1/2} \dim(S_N)^{-\mu} \|u\|_{\check H^{k+1}(U_0)}\,,
   \end{align*}
where the last inequality is by the assumption \eqref{eq.conv.rates}.
\end{proof}

We then obtain the following theorem.

\begin{theorem} \label{thm.integr3}
   We use the assumptions and notation of Theorem~\ref{thm.integr2} and the 
   condition \eqref{eq.conv.rates}. Then, there exists $C_{X, p}$ such that, for all $F \in V_k^-$, we have 
   \begin{equation*}
      \int_{\Omega} \| (\maP_{k+1}^{A(\omega)})^{-1} F - 
      Q_N (\maP_{k+1}^{A(\omega)})^{-1} F \|_{H^1}^p d \omega
      \ \le \ C_{p,X} \dim(S_N)^{-p \mu} \|F\|_{V_k^-}^p\,.
   \end{equation*}
   If $A_\ell = A_\ell^*$ for all $\ell$, then there exists also
   $C_{X, p}^{FEM}$ such that, for all $F \in V_k^-$,
   \begin{equation*}     
      \int_{\Omega} \| (\maP_{k+1}^{A(\omega)})^{-1} F - Q_N^{A(\omega)} 
      (\maP_{k+1}^{A(\omega)})^{-1} F \|_{H^1}^p d \omega
      \ \le \ C_{p,X}^{FEM} \dim(S_N)^{-p \mu} \|F\|_{V_k^-}^p\,.
   \end{equation*}
\end{theorem}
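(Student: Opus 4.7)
The plan is to apply the deterministic FEM error bounds pointwise in $\omega$, raise the resulting inequality to the $p$-th power, and integrate over $\Omega$, controlling the $\omega$-dependent prefactors by the Gaussian-integrability argument already developed in Theorem~\ref{thm.integr2}.

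For the first inequality, fix $F \in V_k^-$ and set $u(\omega) \ede (\maP_0^{A(\omega)})^{-1} F$; by Remark~\ref{rem.classical} this coincides with $(\maP_j^{A(\omega)})^{-1} F$ whenever the latter is defined. Under the hypotheses of Theorem~\ref{thm.integr2}, Lemma~\ref{lemma.majoration.FE} gives invertibility of $\maP_0^{A(\omega)}$, and Corollary~\ref{cor.thm.transmission2} applied with a one-step index shift (or, trivially, $|||\maP_0^{-1}|||_0 \le (\gamma(\omega)\eta_U)^{-1}$ when $k=0$) yields $u(\omega) \in \check H^{k+1}(U_0)$ with
\begin{equation*}
   \|u(\omega)\|_{\check H^{k+1}(U_0)} \, \le \, |||\, (\maP_k^{A(\omega)})^{-1}\,|||_k \, \|F\|_{V_k^-} \,.
\end{equation*}
Applying~\eqref{eq.conv.rates} to $u(\omega)$, raising to the $p$-th power, and integrating,
\begin{equation*}
  \int_\Omega \|u(\omega) - Q_N u(\omega)\|_{H^1}^p\, d\omega \, \le \, C_{rate}^p\, \dim(S_N)^{-p\mu}\, \|F\|_{V_k^-}^p \int_\Omega |||\, (\maP_k^{A(\omega)})^{-1}\,|||_k^p\, d\omega \,.
\end{equation*}
The remaining integral is finite by the proof of Theorem~\ref{thm.integr2} applied with $k$ replaced by $k-1$: the hypothesis $A \in \check W^{k+1,\infty}$ is only strengthened, and the Gaussian moment estimate $\int e^{a\sum_\ell |X_\ell|}d\omega < \infty$ at the end of that proof is insensitive to the index shift. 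This gives the first claim, with $C_{p,X}$ equal to $C_{rate}^p$ times the last integral.

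For the second inequality, $A(\omega)$ is Hermitian by assumption, so $B^{A(\omega)}$ is a genuine inner product on $H_D^1$ satisfying the hypotheses of Lemma~\ref{lemma.majoration.FE}, and Proposition~\ref{prop.comp.FEM} applies, giving
\begin{equation*}
   \|u(\omega) - Q_N^{A(\omega)} u(\omega)\|_{H^1} \, \le \, C_{rate}\,\eta_U^{-1/2}\, \gamma(\omega)^{-1/2}\, \|A(\omega)\|_{L^\infty}^{1/2}\, \dim(S_N)^{-\mu}\,\|u(\omega)\|_{\check H^{k+1}} \,.
\end{equation*}
Raising to the $p$-th power, integrating, and using the preceding bound on $\|u(\omega)\|_{\check H^{k+1}}$ in terms of $|||\, (\maP_k^{A(\omega)})^{-1}\,|||_k$ reduces matters to finiteness of
$\int_\Omega \gamma(\omega)^{-p/2}\,\|A(\omega)\|_{L^\infty}^{p/2}\,|||\, (\maP_k^{A(\omega)})^{-1}\,|||_k^p\, d\omega$,
which is of exactly the form treated by Theorem~\ref{thm.integr2} (with parameters $r = s = p/2$ and the operator-norm exponent $p$, noting $\|A\|_{L^\infty}\le\|A\|_{\check W^{k+1,\infty}}$). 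This yields $C_{p,X}^{FEM}$.

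The only non-routine point is the reindexed invocation of Theorem~\ref{thm.integr2} needed to integrate $|||\, (\maP_k^{A(\omega)})^{-1}\,|||_k^p$ rather than the stated $|||\, (\maP_{k+1}^{A(\omega)})^{-1}\,|||_{k+1}^p$; this is immediate since the strong-ellipticity lower bound $\gamma(\omega) \ge \gamma \min_\ell e^{X_\ell(\omega)}$ and the polynomial-in-Gaussian combination produced by Theorem~\ref{thm.cons.transmission2} retain Gaussian tails of identical type regardless of the Sobolev order.
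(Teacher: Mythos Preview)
Your proof is correct and follows essentially the same approach as the paper's: apply the deterministic approximation bound \eqref{eq.conv.rates} (respectively Proposition~\ref{prop.comp.FEM}) pointwise in $\omega$, raise to the $p$th power, integrate, and absorb the $\omega$-dependent prefactor using the Gaussian integrability of Theorem~\ref{thm.integr2}. Your explicit handling of the index shift (using $|||(\maP_k^{A(\omega)})^{-1}|||_k$ and noting that Theorem~\ref{thm.integr2} applies at any Sobolev level) is a welcome clarification of a point the paper leaves implicit.
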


\begin{proof}
Equation~\eqref{eq.conv.rates} for $u := (\maP_{k+1}^{A(\omega)})^{-1} F$
immediately gives:
\begin{align*}
   \| (\maP_{k+1}^{A(\omega)})^{-1} F - Q_N (\maP_{k+1}^{A(\omega)})^{-1} F \|_{H^1} 
       & \le C \dim(S_N)^{-\mu} \| (\maP_{k+1}^{A(\omega)})^{-1} F \|_{\check H^{k+1}} \\
       & \le C \dim(S_N)^{-\mu} ||| (\maP_{k+1}^{A(\omega)})^{-1} |||_{k} \|F\|_{V_k^-} \,.
\end{align*}
So we may just choose $C_{p,X} := C^p \int_{\Omega} ||| (\maP_{k+1}^{A(\omega)})^{-1}|||_{k}^p\, d \omega$,
which is finite, by Theorem~\ref{thm.integr2}. For the second estimate, we
proceed similarly, but using Proposition \ref{prop.comp.FEM} instead of Equation~\eqref{eq.conv.rates}
and 
$$C_{p,X}^{FEM} \ede C^p \int_{\Omega} \gamma(\omega)^{-p/2}\|A(\omega)\|_{\check W^{k+1}}^{p/2}  |||
(\maP_{k+1}^{A(\omega)})^{-1} F |||_{k}^p\, d \omega\,,$$
which is finite by the same theorem.
\end{proof}

\begin{remark} In all of the above results, we may choose
   $F = F(X_1, X_2, \ldots, X_q)$ to depend on $X_\ell(\omega)$ as well, $1 \le \ell \le q$, 
   as long as the norm growth of $F(x_1, \ldots, x_q)$ in $x$ is at most exponential. 
\end{remark}

\begin{remark} The last part of the last theorem gives an estimate for 
   the average error in the Finite Element Method approximation for all 
   the equations $\maP_0^{A(\omega)}u = F$, since $Q_N^{A(\omega)}(\maP_0^{A(\omega)})^{-1}F 
   \in S_n$ is the Finite Element approximation of the solution $u$ in the discretization
   space $S_N$.
\end{remark}

\section{Self-adjoint operators, boundary triples, and parametric regularity}
\label{sec.self-adjoint}
As discussed in Remark~\ref{rem.invert}, the invertibility of $\maP_0$ in our main result, Theorem~\ref{thm.transmission}, 
follows from strong ellipticity or from $T$-coercivity. In particular, 
we have seen that the hypotheses of Theorem~\ref{thm.transmission}
are satisfied if $A \ge \gamma I$ or if it is strongly elliptic
and the Poincaré's inequality is satisfied. In this section, we employ 
a different setting, namely that of self-adjoint operators, in which case the 
invertibility is obtained from their spectral properties. 
We continue to assume that $\Gamma$ and all boundaries $\pa U_k$, $k = 0, \ldots, N$, are smooth.
We will need the following theorem~\cite{Behrndt2014, Karim1, 
Ciarlet13, Pankrashkin16, Pankrashkin19}, which, we stress, is formulated only for Dirichlet
boundary conditions, so $\pa_N U_0 = \emptyset$ and $\pa U_0 = \pa_D U_0$.

\begin{theorem} \label{thm.self-adjoint} 
Let $A = a_j I$ on $U_j$ (scalar matrix), where $a_j \in \RR^*$ are constants
such that $a_j + a_i \neq 0$ if $U_i$ and $U_j$ are adjacent.
Then $P^A$ with domain 
$$\mathcal{D} \ede \check H^2(U_0) \cap H_0^1(U_0) \cap \{ [[D_{\nu}^{A} u]]_\Gamma = 0 \}$$
is self-adjoint. Therefore $\maP_k^A + t \imath :V_{k} \to V_{k}^-$, $0 \neq t \in \RR$, is invertible
for all $k \ge 1$.
\end{theorem}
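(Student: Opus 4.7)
The plan is: (i) verify that $P^A$ on $\maD$ is symmetric; (ii) establish invertibility of the base-level operator $\maP_0^{A_t}:V_0\to V_0^-$, where $A_t := A + i t E_{00}$, using T-coercivity; and (iii) extend to all levels $k\ge 1$ via Corollary~\ref{cor.thm.transmission}, which simultaneously yields the ``therefore'' assertion of the theorem and the surjectivity of $(P^A+it):\maD\to L^2(U_0)$ needed to close the self-adjointness argument.

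Symmetry is immediate. Since $A = a_j I_{m+1}$ with $a_j\in\RR$, the form $B^A$ is Hermitian. For $u,v\in\maD$, Lemma~\ref{lemma.int.partie} yields
\[
(P^A u,v) \seq B^A(u,v) - \scal{D_\nu^A u}{v}_{\pa U_0} - \scal{[[D_\nu^A u]]_\Gamma}{v}_\Gamma \seq B^A(u,v),
\]
because $v\vert_{\pa U_0}=0$ (recall $\pa U_0 = \pa_D U_0$, as $\pa_N U_0 = \emptyset$) and $[[D_\nu^A u]]_\Gamma=0$ by definition of $\maD$. Hermiticity of $B^A$ then gives $(P^A u,v) = \overline{(P^A v, u)} = (u,P^A v)$.

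For self-adjointness, by the deficiency-index criterion it suffices to show $\mathrm{Range}(P^A+it) = L^2(U_0)$ for some $t\ne 0$. Let $E_{00}$ be the matrix unit at position $(0,0)$ and set $A_t := A + it E_{00}$; then $P^{A_t} = P^A + it\cdot id$ and $D_\nu^{A_t} = D_\nu^A$, so $\maP_k^A + t\imath$ is, under the natural embedding $L^2(U_0)\hookrightarrow V_k^-$, the operator $\maP_k^{A_t}$. The base-case claim is that $\maP_0^{A_t}:V_0 \to V_0^-$ is an isomorphism. The hypothesis $a_i+a_j\ne 0$ across adjacent subdomains is precisely the contrast condition ensuring T-coercivity of $B^A$ on $V_0 = H_0^1(U_0)$ in the sense of~\cite{Ciarlet12,Ciarlet13}: there exists a bounded bijective $T_\ast:V_0\to V_0$ with $\RE B^A(u,T_\ast u) \gtrsim \|u\|_{H^1}^2$. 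The perturbation $it(\cdot,\cdot)_{L^2}$ is compact from $V_0$ to $V_0^-$, so the Fredholm index is preserved; injectivity follows by pairing a putative kernel element $u$ with itself, yielding $B^A(u,u)+it\|u\|_{L^2}^2 = 0$, whose imaginary part forces $u=0$.

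Since $A_t$ is piecewise constant, hence in $\check W^{k,\infty}(U_0)$ for every $k$, Corollary~\ref{cor.thm.transmission} propagates the invertibility of $\maP_0^{A_t}$ to invertibility of $\maP_k^{A_t}:V_k\to V_k^-$ for every $k\ge 0$; this is the ``therefore'' conclusion. Specializing to $k=1$: for any $f\in L^2(U_0)$, the unique preimage of $(f,0)\in V_1^-$ lies in $V_1\cap\{[[D_\nu^A u]]_\Gamma=0\} = \maD$ and satisfies $(P^A+it)u=f$, establishing the required surjectivity and hence self-adjointness. The delicate ingredient is the T-coercivity step for the sign-changing form $B^A$, where the contrast condition $a_i+a_j\ne 0$ is genuinely used; I would invoke the existing theory~\cite{Ciarlet12,Ciarlet13,Karim2}, or alternatively realize $\maD$ as the self-adjoint restriction associated with a boundary triple built from the interface traces and the jump $[[D_\nu^A\,\cdot\,]]_\Gamma$, in the spirit of~\cite{Bruning,Pankrashkin19}.
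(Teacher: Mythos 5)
Your route is genuinely different from the paper's. The paper does not actually prove this statement: the self-adjointness of $(P^A,\mathcal{D})$ is imported wholesale from the cited literature on indefinite Laplacians and boundary triples \cite{Behrndt2014, Karim1, Ciarlet13, Pankrashkin16, Pankrashkin19}, and the invertibility of $\maP_k^A+t\imath$ is then read off from it (the ``therefore''). You reverse the logic: you first prove invertibility of $\maP_0^{A_t}$, $A_t:=A+\imath t E_{00}$, at the variational level (Fredholmness of the sign-changing form plus the imaginary-part injectivity argument), then bootstrap to all $k\ge 1$ with Corollary~\ref{cor.thm.transmission} (legitimate here: $A_t$ is piecewise constant, and since $a_j\neq 0$ and the $\phi_\alpha$ are diffeomorphisms, $(a_{mm}^\alpha)^{-1}$ stays in the broken $W^{k,\infty}$ spaces, so the estimate behind that corollary is available), and finally recover self-adjointness from the surjectivity of $P^A\pm\imath t$ on $\mathcal{D}$. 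This is more self-contained than the paper's citation, and it has the added benefit of actually establishing the ``therefore'' clause --- including the inhomogeneous jump data in $V_k^-$ and the higher-order cases --- which the paper leaves implicit.

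Two points need repair, both within your own framework. First, the T-coercivity you state is too strong: under the sole contrast condition $a_i+a_j\neq 0$ across a smooth interface one gets in general only \emph{weak} T-coercivity, i.e.\ $\RE B^A(u,T_*u)\ge c\|u\|_{H^1}^2-c'\|u\|_{L^2}^2$ (coercive plus compact), hence Fredholmness of index $0$ of $\maP_0^A$, not the bound $\RE B^A(u,T_*u)\gtrsim\|u\|_{H^1}^2$ as written. The strong bound would make $\maP_0^A$ itself invertible, which is false in general: the self-adjoint realization may have $0$ as an eigenvalue. Fortunately your argument only uses index $0$ (stable under the compact perturbation $\imath t(\cdot,\cdot)_{L^2}$, by Rellich on the bounded domain $U_0$) together with injectivity from the imaginary part, so replacing ``T-coercive'' by ``weakly T-coercive, hence Fredholm of index $0$'', as in \cite{Ciarlet12, Ciarlet13}, fixes the step without changing anything downstream. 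Second, for a symmetric operator, surjectivity of $P^A+\imath t$ for a single sign of $t$ does not by itself give self-adjointness (it only kills one deficiency index); you need $\operatorname{ran}(P^A\pm\imath t)=L^2(U_0)$ for both signs, or to observe that the coefficients are real, so complex conjugation intertwines the two deficiency spaces. Since your Fredholm-plus-injectivity argument works verbatim for every real $t\neq 0$ of either sign, this is a one-line addition.
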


So the assumptions of our main theorem, Theorem~\ref{thm.transmission} are also satisfied for 
$P + t \imath$, $t \neq 0$, $\imath := \sqrt{-1}$. By replacing $\maP_0^{-1}$ with 
$(\maP^{-1} +  t \imath)^{-1}$ in the statement of Theorem~\ref{thm.cons.transmission} and
$F = (f, g, h) = (f, 0, 0)$ with $F + \imath u = (f + t \imath u, 0, 0)$. We then obtain 
that $||| (\maP^{-1} +  t \imath)^{-1} ||| \le t^{-1}$ and hence 
following parametric regularity result (which is formulated, for convenience, for $t = 1$.).

\begin{theorem}\label{thm.param.reg}
There exist $c_{k,\maU} > 0$ and $p_k, q_k \in \ZZ_+$, $k \in \NN$, with the following property.
Let $A$ locally constant, scalar be as in Theorem~\ref{thm.self-adjoint}.
Let $k \ge 1$ and $u \in V_1$ be such that $\maP^A u \in V_k^-$. Then $u \in V_k \subset \check H^{k+1}(U_0)$ and
\begin{equation*}
   \|u\|_{\check H^{k+1}} \le c_{k, \maU} \nnW{k-1}{a_{mm}^{-1}}^{p_k} \|A\|_{W^{k,\infty}}^{q_k}
   \big ( \|\maP^A u \|_{V_k^-} + \|u\|_{H^1} \big )\,.
\end{equation*}
The constants $c_{k, \maU}$, $p_k$, and $q_k$ do not depend on $A$, $u$, or $\maP^A u$.
The constant $c_{k, \maU}$ may also depend on the domains $\maU:=\{U_j \mid j=0, \ldots N\}$.
\end{theorem}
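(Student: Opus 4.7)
The plan is to reduce Theorem~\ref{thm.param.reg} to Theorem~\ref{thm.transmission} applied to the shifted coefficient matrix $A' := A + \imath e_{00}$, where $e_{00}$ is the matrix unit with a single nonzero entry $1$ at position $(0,0)$. Then $P^{A'} = P^A + \imath \cdot \mathrm{id}$, while the conormal derivative is unchanged, $D_{\nu}^{A'} = D_{\nu}^{A}$ (it involves only $a_{ij}$ with $i \ge 1$). Consequently $\maP_k^{A'} u = \maP_k^A u + (\imath u, 0, 0)$ for every $u \in V_k$. By Theorem~\ref{thm.self-adjoint} the operator $\maP_1^{A'}: V_1 \to V_1^-$ is invertible, which is exactly the invertibility hypothesis required by Theorem~\ref{thm.transmission} at level $n = 1$.

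I would first establish the base case $k=1$. The inequality
\begin{equation*}
   \|u\|_{\check H^2} \lesssim \nnW{0}{a_{mm}^{-1}}^{p_1} \|A\|_{W^{1,\infty}}^{q_1}\big(\|\maP^A u\|_{V_1^-} + \|u\|_{H^1}\big)
\end{equation*}
follows from Lemma~\ref{lemma.sobolev1et2}(3) applied to the operator $P^{A'}$ (together with the usual localisation/straightening-out arguments for the Neumann trace $D_\nu^A u\vert_{\pa_N U_0}$ and the jump $[[D_\nu^A u]]_\Gamma$, as in the boundary part of the proof of Theorem~\ref{thm.transmission}). The point is that Lemma~\ref{lemma.sobolev1et2}(3) never uses the invertibility of $\maP_0^A$: it gives a raw a priori estimate with $\|u\|_{\check H^{k+1}}$ on the left and $P^A u$ together with first-order lower-order terms in $u$ on the right. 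Rearranged, this yields a polynomial bound $|||(\maP_1^{A'})^{-1}|||_1 \lesssim \nnW{0}{a_{mm}^{-1}}^{p_1}\|A\|_{W^{1,\infty}}^{q_1+1}$, where the additional factor comes from dominating $\|u\|_{L^2}$ by $\|\maP_1^{A'}u\|_{V_1^-}$ via the self-adjoint calculus ($\|(P^A+\imath)^{-1}\|_{L^2\to L^2}\le 1$).

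With this polynomial bound on $|||(\maP_1^{A'})^{-1}|||_1$ at hand, I would apply Theorem~\ref{thm.transmission} (with $n = 1$ and with $A$ replaced by $A'$) to the element $F' := \maP_1^{A'} u$. Since $\|F'\|_{V_j^-} \le \|\maP^A u\|_{V_j^-} + \|u\|_{\check H^{j-1}}$, the resulting estimate at level $k$ reads
\begin{equation*}
   \|u\|_{\check H^{k+1}} \lesssim \sum_{q} |||(\maP_1^{A'})^{-1}|||_1^{q+1}\,\nnW{k-1}{a_{mm}^{-1}}^{*}\,\|A\|_{\check W^{k,\infty}}^{*}\,\big(\|\maP^A u\|_{V_{k-q}^-} + \|u\|_{\check H^{k-1-q}}\big)\,.
\end{equation*}
The lower-order $\|u\|_{\check H^{j}}$ terms with $j < k$ are absorbed by induction on $k$: at each inductive step the datum $\maP^A u$ still belongs to the appropriate $V_j^-$ (since $V_k^- \subset V_{k-1}^-$), so the induction hypothesis expresses $\|u\|_{\check H^{j+1}}$ in terms of $\|\maP^A u\|_{V_j^-}$ and $\|u\|_{H^1}$. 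Composing the polynomial factors produces the claimed exponents $p_k, q_k \in \ZZ_+$.

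The hard part is the base case, and in particular the passage from the $L^2\to L^2$ norm bound $\|(P^A + \imath)^{-1}\|\le 1$ furnished by self-adjointness to a polynomial bound on $|||(\maP_1^{A'})^{-1}|||_1 = \|(\maP_1^{A'})^{-1}\|_{\maL(V_1^-;V_1)}$. This requires a boundary/interface lift of $V_1^-$-data into $V_1$ with norm polynomial in $\|A\|$, followed by the elliptic regularity of Lemma~\ref{lemma.sobolev1et2}(3). Everything after that is a repetition of the induction scheme of Theorem~\ref{thm.transmission} with this polynomial factor inserted in place of $|||\maP_0^{-1}|||_0$, and the only real work is the bookkeeping of exponents.
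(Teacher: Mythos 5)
Your overall reduction is the same as the paper's: shift the coefficient matrix to $A' := A + \imath e_{00}$ (so that $\maP_k^{A'}u = \maP_k^A u + (\imath u,0,0)$ and $D_\nu^{A'}=D_\nu^A$), invoke Theorem~\ref{thm.self-adjoint} for the invertibility of $\maP_1^{A'}$, apply Theorem~\ref{thm.transmission} with $n=1$ to $F' := \maP_1^{A'}u$, and absorb the intermediate norms $\|u\|_{\check H^j}$ by induction on $k$ (the paper states this substitution very tersely, and your induction bookkeeping is a reasonable way to reduce the remainder to $\|u\|_{H^1}$).

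The genuine gap is in your base case, i.e.\ in the claimed intermediate bound $|||(\maP_1^{A'})^{-1}|||_1 \lesssim \nnW{0}{a_{mm}^{-1}}^{p_1}\|A\|_{\check W^{1,\infty}}^{q_1+1}$. First, Lemma~\ref{lemma.sobolev1et2}(3) does not yield the a priori estimate you assert: its right-hand side contains $\|A\|_{\check W^{1,\infty}}\sum_{j<m}\|X_j^\alpha u\|_{\check H^{1}}$, i.e.\ tangential \emph{second} derivatives of $u$, which are not controlled by $\|u\|_{H^1}$; in this paper's framework those terms are controlled only through Nirenberg's trick (Corollary~\ref{coro.esti.b.var}), which presupposes bijectivity of $\maP_1^{A'}$ \emph{together with} a quantitative bound on $|||(\maP_1^{A'})^{-1}|||_1$ --- exactly the quantity you are trying to produce, so the proposed derivation ($L^2$ resolvent bound, plus a lift, plus Lemma~\ref{lemma.sobolev1et2}(3)) is circular. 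Second, no bound of the proposed form can hold uniformly in the sign-changing setting: for adjacent subdomains $U_i, U_j$ with constants $a_i, a_j$, a cutoff of the flat-interface quasimode $e^{\imath \xi\cdot x'}e^{-|\xi|\,|x_m|}$ has zero jump of $u$, conormal jump of $H^{1/2}$-size $\sim |a_i+a_j|\,|\xi|^{3/2}$, interior datum of lower order in $|\xi|$, and $\check H^2$-norm $\sim |\xi|^{3/2}$; letting $|\xi|\to\infty$ gives $|||(\maP_1^{A'})^{-1}|||_1 \gtrsim |a_i+a_j|^{-1}$, and $|a_i+a_j|^{-1}$ is not dominated by any polynomial in $\|A\|_{\check W^{1,\infty}}$ and $\nnW{0}{a_{mm}^{-1}}$ (take $a_i=1$, $a_j\to -1$). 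So the $L^2\to L^2$ bound furnished by self-adjointness cannot be upgraded to a polynomial $\maL(V_1^-;V_1)$ bound, and your scheme breaks at this step. The paper itself does not attempt such an upgrade: it substitutes the resolvent bound for $\maP+t\imath$ directly into the estimate of Theorem~\ref{thm.transmission} with $n=1$ and $F$ replaced by $F+t\imath u$, keeping the resulting $u$-terms on the right (this is where $\|u\|_{H^1}$ in the statement comes from), rather than routing the argument through a polynomial bound on $|||(\maP_1^{A'})^{-1}|||_1$.
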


\end{document}